\numberwithin{equation}{section}
\def\p{\partial }
\def\M{\mathcal M}
\def\F{\mathcal F}
\def\R{\Bbb R}
\def\Om{\Omega}
\def\pom{\p  \Omega}
\def\I{\mathcal I}
\def\D{\mathcal D}
\def\A{\mathcal A}
\def\B{\mathcal B}
\def\P{\mathcal P}
\def\K{\mathcal K}
\def\Q{\mathcal Q}
\def\T{\mathcal T}
\newtheorem{Proposition}{Proposition}[section]
\newtheorem{Theorem}[Proposition]{Theorem}
\newtheorem{Lemma}[Proposition]{Lemma}
\newtheorem{Claim}[Proposition]{Claim}
\newtheorem{Corollary}[Proposition]{Corollary}
\newtheorem{Remark}[Proposition]{Remark}
\newtheorem{Example}[Proposition]{Example}
\newtheorem{Definition}[Proposition]{Definition}
\title  {  A Liouville theorem for the Neumann problem of Monge-Amp\`ere equations}\thanks{This work was supported by NSFC   11771237.}
\begin{document}

\address{Huaiyu Jian: Department of Mathematical Sciences, Tsinghua University, Beijing 100084, China.}

 \address{Xushan Tu: Department of Mathematical Sciences, Tsinghua University, Beijing 100084, China.}

%  \thanks {This work was supported by Australian Research Council
%  and NSFC10631020 and 10871061.}

%  \subjclass {35J60, 53A15, 53C45}

\email{hjian@tsinghua.edu.cn; \;  txs17@mails.tsinghua.edu.cn. }

%\date{}

\bibliographystyle{plain}

%\tableofcontents
\maketitle

\baselineskip=15.8pt
\parskip=3pt

\centerline {\bf  Huaiyu Jian}
\centerline {Department of Mathematical Sciences, Tsinghua University}
\centerline {Beijing 100084, China}

\vskip10pt

 \centerline { \bf Xushan Tu}
\centerline {Department of Mathematical Sciences, Tsinghua University}
\centerline {Beijing 100084, China}

  \vskip 15pt

\vskip20pt

\begin{abstract}
In this paper, we study  the Neumann problem of Monge-Amp\`ere equations in Semi-space.
For two dimensional case, we prove that  its viscosity convex solutions  must be a
quadratic  polynomial.
When the space dimension $n\geq 3$, we  show that the conclusion 
still holds if either the boundary value is zero or the   viscosity convex  solutions restricted on some $n-2$ dimensional subspace is bounded from above by a
quadratic function.
\end{abstract}

 \vskip20pt

\noindent {\bf AMS Mathematics Subject Classification}:  35J60,  35B50.

\vskip20pt

\noindent {\bf  Running head}: Liouville theorem for   Monge-Amp\`ere equation

\vskip20pt

\baselineskip=15.8pt
\parskip=3pt

\newpage

\centerline {\bf   A Liouville theorem for the Neumann problem of Monge-Amp\`ere equations    }

 \vskip10pt

\centerline { Huaiyu Jian\ \ \  Xushan Tu  }

%\tableofcontents
\maketitle

\baselineskip=15.8pt
\parskip=3.0pt

\section { Introduction}
%sec 1

The Bernstein theorem  for Monge-Amp\`ere equation $\operatorname{det}  D^2u(x)=1 $ in $\R ^n$ concludes that its solution must be a
quadratic  polynomial, which was proved in
\cite{[Jo], [Ca], [P1], [P2], [CY]}. This type of theorems  was extended to the Hessian equation\cite{[BCGJ]}, special Lagrangian equation\cite{[Y]}, affine maximal hypersurface
equation\cite{[TW]} and more general  Monge-Amp\'ere equation $\operatorname{det}  D^2u(x)=f(x) $ \cite{[CL1], [CL2], [CW], [JW3]}, and it is called Liouville theorem if additional 
assumption on the solution is necessary, just as the Liouville theorem for harmonic functions. 
The Liouville theorem  for  Monge-Amp\`ere equations in semi-space $\R_+ ^n$ with Dirichlet boundary value on  $\partial \R_+ ^n$ was studied in \cite{[JW2], [S1]}.

In this paper, we study the Liouville   theorem for the  Neumann
problem
\begin{equation} \label{liouville problem}
	\begin{cases}
		\operatorname{det}  D^2u(x)=1          & \text{ in }\R_+^{n},\\
		D_{n} u(x',0)= ax_1                       & \text{ on }\partial  \R_+^{n}.
	\end{cases}
\end{equation}
Here and below, $\R_+^n:=\{(x', x_n)\in \R^n:\; x_n>0\}$,
$D_n u=D_{e_n} u=\frac{\partial u}{\partial x_n}$, $a\in R$ is a constant, and $e_n$
is the standard unit vector on $x_n$-axis.

When $n\geq 3$, we need to assume that {\sl $u$ is quadratic growth in the auxiliary subspace at the infinite, i.e.,
\begin{equation} \label{growth condition}
	\lim_{z\in \R^{n-2}, z \to \infty}  \frac{u(0,z,0)}{|z|^2 } <\infty .
\end{equation} }
We use {\sl $ \Theta $ to denote the family of functions satisfying \eqref {growth condition} }.

We will use the standard   concept for (Alesandrov) generalized solutions and viscosity solutions to the first equation  of \eqref{liouville problem}. See the book \cite{[G]} for example.
Caffarelli\cite{[C1]} indicated that the generalized solution and viscosity solution to the first equation  of \eqref{liouville problem} are equivalent, which is proved in detail
  in  \cite{[G]}. This fact will be used again and again.   To describe the Neumann boundary value condition in a weak sense, we choose the viscosity solution.

  We say that  {\sl   $u$ is a viscosity subsolution ( or supersolution) of problem \eqref{liouville problem} if  $u \in C(\overline {\R_+^n})$  is a  convex function,
  and  for any $x_0\in\overline {\R_+^n}$ and any convex function $ \varphi \in  C^2(\overline {\R_+^n})$, the condition that $u-\varphi$ has
   a local maximun ( or minimum)
at $x_0$, implies
$$\begin{cases}
  \operatorname{det} D^2\varphi(x_0) \geq ( or\leq)  1       & if\; x_0\in   \R_+^n ,\\
D_n\varphi(x_0)\geq ( or\leq) \phi(x_0)                          & if\;  x_0\in \partial  \R_+^n .
\end{cases}$$
If   $u\in C(\overline {\R_+^n})$ is both  viscosity subsolution and viscosity supersolution, we call it a viscosity solution. }

The main result of this paper  is the following
\begin{Theorem} \label{liouville theorem}
	Let $u$  be a   viscosity solution to problem \eqref{liouville problem}.  If either $n=2$, or when $n\geq 3$,  $u \in \Theta $ or $a=0$,
	then $u$ is a quadratic  polynomial.
\end{Theorem}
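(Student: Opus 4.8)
The plan is to reduce \eqref{liouville problem} to a Liouville problem on $\R^{n}$, where one invokes the classical theorem of J\"orgens, Calabi and Pogorelov (every convex generalized solution of $\det D^{2}u=1$ on $\R^{n}$ is a quadratic polynomial), assisted in the planar case by Picard's theorem for entire functions.

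\textbf{The case $a=0$, any $n\ge 2$.} By Caffarelli's equivalence of generalized and viscosity solutions, interior regularity, and the regularity theory for the Neumann problem, $u\in C^{\infty}(\ol{\R_{+}^{n}})$ with $D^{2}u>0$ in $\R_{+}^{n}$; since $D_{nn}u>0$ the map $x_{n}\mapsto D_{n}u(x',x_{n})$ is increasing, so $D_{n}u(x',0)=0$ gives $D_{n}u\ge0$ in $\R_{+}^{n}$. Put $\widetilde u(x',x_{n}):=u(x',|x_{n}|)$. One checks $\widetilde u$ is convex on $\R^{n}$: along lines parallel to $e_{n}$ this follows from $D_{n}u\ge0$ and the $C^{1}$ matching $D_{n}u(x',0)=0$, and the general case reduces to this by the monotonicity of $u$ in $x_{n}$ and the convexity of $u$ on each slice $\{x_{n}=t\}$. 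Then $\widetilde u$ solves $\det D^{2}\widetilde u=1$ in the Alexandrov sense off $\{x_{n}=0\}$, and its Monge-Amp\`ere measure gives no mass to $\{x_{n}=0\}$: any $q\in\partial\widetilde u(x',0)$ satisfies $\widetilde u(x',\pm t)\ge\widetilde u(x',0)\pm q_{n}t$ by evenness, forcing $q_{n}=0$, whence $\partial\widetilde u(\{x_{n}=0\})\subseteq\{p_{n}=0\}$, a Lebesgue-null set. Hence $\widetilde u$ is a global generalized (equivalently viscosity) solution of $\det D^{2}\widetilde u=1$ on $\R^{n}$, therefore quadratic, and so is $u$.

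\textbf{The case $n=2$, $a\ne0$.} I would use the partial Legendre transform. By interior and boundary regularity $u\in C^{\infty}(\ol{\R_{+}^{2}})$, and $u_{x_{1}x_{1}}u_{x_{2}x_{2}}-u_{x_{1}x_{2}}^{2}=1$ with convexity forces $u_{x_{1}x_{1}}>0$. Put $p=u_{x_{1}}(x_{1},x_{2})$, $w(p,x_{2})=p\,x_{1}-u$; then $w_{p}=x_{1}$, $w_{pp}=1/u_{x_{1}x_{1}}>0$ and $w_{pp}+w_{x_{2}x_{2}}=0$, so $w$ is harmonic on its image domain $\Om^{\ast}=\{(u_{x_{1}}(x_{1},x_{2}),x_{2}):x_{2}>0\}$, while $u_{x_{2}}(x_{1},0)=a x_{1}$ becomes the oblique condition $w_{x_{2}}+a\,w_{p}=0$ on $\partial\Om^{\ast}\cap\{x_{2}=0\}$. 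One first shows $\Om^{\ast}$ is the whole upper half-plane, i.e.\ $x_{1}\mapsto u_{x_{1}}(x_{1},x_{2})$ is onto $\R$: if $u_{x_{1}}(\cdot,x_{2})$ were bounded, $u$ would be asymptotically affine in $x_{1}$, which $\det D^{2}u=1$ rules out (it would force $u_{x_{1}}$ to grow logarithmically). Standard oblique-derivative regularity then makes $w$ smooth up to $\{x_{2}=0\}$. Writing $w=\operatorname{Re}F$ with $F$ holomorphic in $z=p+\i x_{2}$ and $F'=w_{p}-\i w_{x_{2}}$, the oblique condition says precisely that $(1-a\i)F'$ has real boundary values on $\R$; by Schwarz reflection $(1-a\i)F'$, hence $F'$, hence $F$, extends to an entire function on $\R^{2}$. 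Now $V:=F''$ is entire, the reflection gives $V(\bar z)=\tfrac{1+a\i}{1-a\i}\,\overline{V(z)}$, and $\operatorname{Re}V=w_{pp}>0$ on the upper half-plane; since $\tfrac{1+a\i}{1-a\i}\ne1$, the images of the two half-planes under $V$ lie in two distinct half-planes whose union omits a nonempty open set, so $V$ is constant by Picard's theorem. Hence $F$ is quadratic, $w$ is a harmonic polynomial of degree $2$, and inverting the (now linear) relation $w_{p}=x_{1}$ shows $D^{2}u$ is constant, i.e.\ $u$ is a quadratic polynomial.

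\textbf{The case $n\ge3$, $u\in\Theta$, $a\ne0$.} Now \eqref{growth condition} is used to tame $u$ along the $(n-2)$ ``inert'' directions $z=(x_{2},\dots,x_{n-1})$, and the goal is to reduce to the planar case. Differentiating the Neumann condition in the $z$-directions gives $u_{x_{i}x_{n}}(x_{1},z,0)=0$ for $i\in\{2,\dots,n-1\}$ and $u_{x_{1}x_{n}}(x_{1},z,0)=a$. Next, from \eqref{growth condition}, convexity and the equation (the equation again excluding degeneracies), I would derive a global quadratic bound $u(x)\le C(1+|x|^{2})$; this makes the rescalings $u_{R}(x):=R^{-2}u(Rx)$ — which satisfy the \emph{same} problem \eqref{liouville problem} — precompact, a subsequential limit being a convex solution of \eqref{liouville problem} that is homogeneous of degree $2$, and via the section-normalization (John-position) machinery the Monge-Amp\`ere sections of $u$ become asymptotically ellipsoidal, with bounded eccentricity in the $z$-directions. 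From this I would deduce that every second derivative $u_{x_{i}x_{j}}$ with an index in $\{2,\dots,n-1\}$ is constant; together with the boundary identities above and a volume-preserving shear absorbing the $z$--$x_{1}$ cross terms, this gives a splitting $u=v(x_{1},x_{n})+\tfrac12\langle Qz,z\rangle$ with $Q>0$ and $v$ a convex solution of a planar problem of the form \eqref{liouville problem} ($\det D^{2}v=1/\det Q$ and boundary slope $a$, normalized to $\det D^{2}v=1$ by scaling), whence the case $n=2$ finishes the proof.

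\textbf{Main obstacle.} I expect the crux to be the case $n\ge3$: going from the one-sided quadratic bound \eqref{growth condition} on a single $(n-2)$-plane to the global quadratic growth of $u$, and then, via the section-normalization argument, to the splitting off of $\tfrac12\langle Qz,z\rangle$. This is where \eqref{growth condition} is genuinely needed — without it, non-quadratic solutions of the Dirichlet half-space problem are known to exist. In the planar case the analogous point, excluding incompleteness of the Legendre image $\Om^{\ast}$, is handled by the equation. The case $a=0$ is, by contrast, essentially immediate once the convexity of the even reflection and the absence of Monge-Amp\`ere mass on $\{x_{n}=0\}$ have been checked.
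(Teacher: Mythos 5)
Your treatment of the case $a=0$ coincides with the paper's (Remark 2.16): even reflection across $\partial\R_+^n$ plus the J\"orgens--Calabi--Pogorelov theorem; the extra checks you make (convexity of the reflected function, absence of Monge--Amp\`ere mass on $\{x_n=0\}$) are exactly what Lemma 3.2 and Corollary 3.3 of the paper supply. Your $n=2$ argument is essentially the paper's alternative ``Proof 1'' in Remark 8.2 (partial Legendre transform turning the problem into an oblique-derivative problem for a harmonic function on the half-plane), except that you conclude via Schwarz reflection and Picard where the paper uses the sign $V=D_{v^*}u^*\le 0$ together with the Liouville theorem for harmonic functions; both closures work. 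Be aware, though, that the input you describe as ``interior and boundary regularity'' ($u\in C^{\infty}(\ol{\R_+^{2}})$, surjectivity of $x_1\mapsto u_{x_1}(\cdot,x_2)$) is not citable off the shelf for this Neumann problem --- the paper manufactures the needed $C^1$ boundary regularity and superlinearity from its strict-convexity lemmas (Lemmas 4.1--4.2) before the Legendre transform can be set up.

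The genuine gap is the case $n\ge3$, $a\ne0$, and you have located it yourself. Three of your steps are asserted rather than proved, and each is a substantial theorem. (i) Passing from the one-sided bound \eqref{growth condition} on a single $(n-2)$-plane of the boundary to global quadratic growth of $u$: this is precisely the content of the paper's Good Shape Theorem 5.1 together with Lemma 7.3, and it requires the barrier constructions of Lemmas 5.2--5.4 in which the Neumann datum enters essentially; convexity and the equation alone do not propagate the bound off that $(n-2)$-plane. (ii) Homogeneity of a subsequential blow-down of $R^{-2}u(Rx)$ does not follow from precompactness; no monotonicity formula is available here. (iii) The deduction that all second derivatives carrying an index in $\{2,\dots,n-1\}$ are constant, and the resulting splitting $u=v(x_1,x_n)+\tfrac12\langle Qz,z\rangle$: bounded eccentricity of sections in the $z$-directions yields two-sided second-difference bounds at best, not constancy, and the shear you invoke to absorb the $x_1$--$z$ cross terms does not preserve the boundary condition $D_nu=ax_1$. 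Note also that differentiating the Neumann condition tangentially presupposes $C^2$ regularity up to $\partial\R_+^n$, which is part of what must be proved. The paper's actual mechanism for $n\ge3$ is different and never reduces to the planar case: after normalizing sections and proving uniform strict convexity, it runs a boundary-Harnack oscillation-decay iteration on $v_R=D_nu_R-a_Rx_1$ (Section 8), obtaining $\omega_{\rho R}\le(1-\theta)\omega_R$ and hence $D_nu-ax_1=cx_n$ exactly; integrating gives $u=f(x')+ax_1x_n+cx_n^2$, and the classical theorem is then applied in dimension $n-1$ to $f(x')-\frac{a^2}{4c}x_1^2$. Without something playing the role of that iteration --- or genuine proofs of (i)--(iii) --- the $n\ge3$ case is not established.
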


In a future paper \cite{[JT]}, we will use Theorem \ref{liouville theorem} and perturbation method  to prove the global Schauder estimate for the viscosity solution to the Neumann problem of
 Monge-Amp\`ere equation $\operatorname{det}  D^2u(x)=f(x) $ in a bounded convex domain. Such estimate  was obtained in \cite{[S],[TW08]} for Dirichlet boundary value problem.

Observe that the condition
\eqref{growth condition} does not depend on the base point $0$, this is because
\[ 2u(a,b+z'',0) \leq u(0,2z'',0)+ u(2a,2b,0)\]
by the convexity, and \eqref{growth condition} is invariant after subtracting any linear functions.
Therefore, under a suitable linear transformation, our theorem actually applies to the following problem:
\begin{equation} \label{1.3 equation}
\begin{cases}
\operatorname{det}  D^2u(x)=c          & \text{ in }\R_+^{n},\\
D_{\alpha} u(x',0)= \beta \cdot x'  +b                    & \text{ on }\partial \R_+^{n},
\end{cases}
\end{equation}
where $c>0$ and $b\in R$ are constants, while $\alpha$ and $\beta$ are constant vectors such that $\alpha \cdot e_n >0$ and $\beta \cdot e_n =0$.
In this situation, the assumption \eqref{growth condition} changes to
\begin{equation} \label{1.4 equation}
	\lim_{z'\in H'',  z' \to \infty}  \frac{u(z',0)}{|z'|^2 } < \infty
\end{equation}
for some $n-2$ dimensional subspace $H''\subset R^n$ which is not parallel to
$\beta$. And Theorem 1.1 is turned to

\begin{Theorem} \label{a liouville theorem}
	Let $u \in C(\overline {\R_+^n})$ be a continuous, convex viscosity solution to problem \eqref{1.3 equation}. Suppose that either $n=2$, or when $n\geq 3$,  $u $ satisfies \eqref{1.4 equation} or $\beta=0$,
	then $u$ is a quadratic  polynomial.
\end{Theorem}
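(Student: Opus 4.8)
The plan is to reduce each case to the classical Bernstein theorem for entire solutions of $\operatorname{det}D^2U=1$ in $\R^n$ (the references cited in the introduction, which apply to viscosity solutions via interior regularity), either by extending $u$ across $\partial\R_+^n$ or, when the inhomogeneous boundary data obstructs a direct extension, by first linearising the equation. Two facts will be used repeatedly: a viscosity solution of $\operatorname{det}D^2u=1$ is smooth and strictly convex in the interior; and in the viscosity sense the Neumann datum of $u$ is its one-sided normal derivative $D_n^+u(x',0)$.

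\textbf{Case $a=0$ (any $n$).} Then $D_n^+u(x',0)=0$, and I would extend $u$ by even reflection, $\tilde u(x',x_n):=u(x',|x_n|)$. First, $\tilde u$ is convex on $\R^n$: since $u(x',\cdot)$ is convex with $D_n^+u(x',0)=0$ it is nondecreasing in $x_n\ge 0$, so on a segment meeting $\{x_n=0\}$ the reflection glues in a $C^1$ fashion, which preserves convexity. Second, the Monge-Amp\`ere measure of $\tilde u$ assigns no mass to $\{x_n=0\}$: for $x=(x',0)$, symmetry of $\tilde u$ in $x_n$ forces every $p=(p',p_n)\in\partial\tilde u(x)$ to satisfy $|p_n|\le D_n^+u(x',0)=0$, so $\bigcup_{x'}\partial\tilde u(x',0)\subset\{p_n=0\}$ is Lebesgue-null. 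Hence $\tilde u$ is an Alexandrov, equivalently viscosity, solution of $\operatorname{det}D^2\tilde u=1$ on all of $\R^n$; by the classical theorem it is a quadratic polynomial, and therefore so is $u$.

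\textbf{Case $n=2$ (any $a$).} Reflection now fails, since making the extension $C^1$ across $\{x_2=0\}$ requires adding a multiple of $x_1x_2$, a saddle, which destroys convexity. Instead I would apply the partial Legendre transform in $x_1$: with $u\in C^\infty(\R_+^2)$, $u_{11},u_{22}>0$, set $\xi=u_{x_1}$, $\eta=x_2$, $v(\xi,\eta)=x_1\xi-u$; a direct computation gives $v_{\xi\xi}=1/u_{11}$, $v_{\eta\eta}=-1/u_{11}$, $v_{\xi\eta}=-u_{12}/u_{11}$, so $v$ is harmonic, and from $v_\xi=x_1$, $v_\eta=-u_{x_2}$ the boundary condition $u_{x_2}(x_1,0)=ax_1$ becomes $v_\eta+a\,v_\xi=0$ on $\{\eta=0\}$. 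The point is that $h:=v_{\xi\xi}=1/u_{11}$ is a strictly positive harmonic function which, differentiating the oblique relation tangentially, satisfies $h_\eta+a\,h_\xi=0$ on $\{\eta=0\}$ as well; feeding this (together with the at most polynomial growth that convexity plus the equation force on $\nabla u$, hence on $v$) into a Schwarz-reflection argument for the holomorphic function $v_\xi-iv_\eta$ — whose derivative has real part $h>0$ — one concludes it is a polynomial. Since $v$ is harmonic and $v_{\xi\xi}=h>0$, this pins $v$ down to a quadratic polynomial, whence $u_{11},u_{12},u_{22}$ are constant and $u$ is quadratic. A technical point to settle is that the Legendre image of $\R_+^2$ is the full half-plane $\{\eta>0\}$ (which follows once $u$ is known to grow superlinearly along each horizontal slice), or else the argument must be run on an exhausting family of subdomains.

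\textbf{Case $n\ge 3$, $u\in\Theta$.} The scaling $u_R(x):=R^{-2}u(Rx)$ preserves the problem and leaves the limit in \eqref{growth condition} unchanged. The idea is to convert the one-directional bound $u(0,z,0)\le C(1+|z|^2)$ on $\{x_1=x_n=0\}$, together with convexity and $\operatorname{det}D^2u=1$, into two-sided Hessian bounds $c\,I\le D^2u\le C\,I$: after the affine renormalisation $T$ of John's lemma one has $T(\{u<\ell+t\})\approx B_1$, and \eqref{growth condition} is exactly what keeps $T$ from degenerating, controlling the eccentricity. Once $cI\le D^2u\le CI$ the equation is uniformly elliptic, $u$ is globally smooth (Caffarelli's interior $C^{2,\alpha}$ estimate together with boundary regularity for the oblique Monge-Amp\`ere problem), and one finishes either by a blow-down $u_R\to u_\infty$ whose normalised sections are ellipsoids — forcing $u_\infty$, and then $u$, to have constant Hessian — or by comparing $u$ with a quadratic solution $Q$ matching it at infinity: $w:=u-Q$ then solves a uniformly elliptic linear equation $a^{ij}w_{ij}=0$ in $\R_+^n$ with $D_nw=0$ on the boundary and subquadratic growth, so (reflecting $w$) it is affine and $u=Q+\text{affine}$ is quadratic. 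An alternative is to use translation invariance in $z$ to show \eqref{growth condition} forces $u$ to split as $\tfrac12\langle Az,z\rangle+w(x_1,x_n)$, reducing to the case $n=2$ for $w$. The genuine difficulty lies here: passing from the single growth hypothesis \eqref{growth condition} to two-sided Hessian (equivalently, section-eccentricity) bounds \emph{up to the boundary}. Unlike for the Dirichlet problem there is no reflection available, so the affine-invariance machinery must be run in the presence of the Neumann operator $D_nu=ax_1$; this is precisely what forces the dimension count $n-2$ in \eqref{growth condition}, the two excluded directions being $x_1$, which enters the boundary data, and the normal direction $x_n$.
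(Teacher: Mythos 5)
Your case $a=0$ is essentially the paper's own argument (Remark \ref{reflect classical results}): even reflection plus the classical J\"orgens--Calabi--Pogorelov theorem, with the correct observation that $\partial\tilde u(\{x_n=0\})\subset\{p_n=0\}$ kills any singular Monge--Amp\`ere mass on the hyperplane. But note that even here you are quoting as a ``fact'' something the paper must work for: that the viscosity Neumann condition forces the pointwise identity $D_n^+u(x',0)=ax_1$ (and not merely the one-sided inequality coming from test functions touching from above). The paper proves this by an explicit barrier construction in a thin tube (Lemma \ref{viscosity neumann equiv}); without it the reflection is not known to be convex. Your $n=2$ argument via the partial Legendre transform is one of the paper's two alternative proofs (Remark 8.2, Proof 1), but it stands or falls on the point you yourself flag: that the image of the transform is the full half-plane, equivalently that $u$ grows superlinearly on horizontal slices. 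That superlinearity is not free; in the paper it comes out of the strict convexity lemmas of Section 4 together with uniform $C^{1,\alpha}$ regularity up to the boundary, and it is a genuine piece of work, not a ``technical point to settle.''

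The decisive gap is the main case $n\ge 3$, $u\in\Theta$. You correctly diagnose the central difficulty --- upgrading the one-directional growth bound \eqref{1.4 equation} to two-sided control of the eccentricity of the sections $S_h(0)$ up to the Neumann boundary, where no reflection is available --- but you do not overcome it; you only enumerate candidate strategies (blow-down, comparison with a quadratic $Q$, splitting off the $z$-variables), each of which presupposes exactly the uniform Hessian or section-shape bounds that are at issue. For instance, comparing with $Q$ and reflecting $w=u-Q$ requires already knowing that $u$ is asymptotically quadratic with uniformly elliptic linearization up to $\partial\R^n_+$. The paper's actual resolution occupies Sections 5--8: the Good Shape Lemma (balancedness and non-degeneracy of $\P G_h(0)$, proved by bespoke barriers for the mixed boundary-value problem), a stability theorem handling the regime where the rescaled Neumann coefficient $a_h$ is small, the universal strict convexity theorem \eqref{7.3}--\eqref{7.4}, and finally an oscillation-decay iteration \`a la Caffarelli for $v_R=D_nu_R-a_Rx_1$ using the sub/supersolutions $w^{R,y,\pm}$ and the Harnack inequality, which shows $D_nu$ is linear and reduces the problem to the classical Bernstein theorem in $\R^{n-1}$. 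None of that machinery, nor a substitute for it, appears in your proposal, so the theorem is not proved in the only case where it is genuinely hard.
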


Our start point is to consider the level set at boundary point. First, we need to prove that $\nabla u(0)$ is well defined for a viscosity solution. Then the main task is
  to study the geometry  of  the height  section
\begin{equation} \label{section definition}
S_{h}(0) :=\{x \in \overline{\R_+^n} |\, u(x)<u(0)+\nabla u(0) \cdot x+h\}
\end{equation}
by blowing  down $u$  at the infinite point of  auxiliary subplane  and then blowing up at origin. As a result, we will finally prove that $D_nu(x)$
is a linear function.
The case $n=2$  or $a=0$  is  much more easy, which will be proved in Remark \ref{reflect classical results} and Remark 8.2 respectively.

This paper is arranged as follows.
In section 2, we will  show  the basic  properties for  the viscosity solutions of  problem \eqref{liouville problem}
 and  list a few  well-known Lemmas to be used.
In section 3, we will prove
 the strict convexity of solution to \eqref{liouville problem} away from the boundary  and then introduce the linearized problem for $D_nu(x)$
to obtain a boundary estimate of the normal derivatives of the solutions.
In section 4, we will study a general  global strict convexity lemma and provide an example showing that the growth assumption for the strict convexity is optimal.
In section 5,  we will prove  the height  section of solution to \eqref{liouville problem} has a good shape.  In section 6, we will use the result for case $a=0$ to prove a stability result for small $a$.
In section 7, we will prove the  global uniformly  convexity of solution to \eqref{liouville problem}.
In the last section, we will improve the famous  $C^{1,\alpha}$ estimate techniques\cite{[C1], [C3]},
 so as to prove that  the blow down-up limit has certain    $C^2$ regularity on a cone with vertex $0$,
 which implies that $u$ is a quadratic  polynomial.

\section{Preliminaries}
We start at  notations and definitions, then list some basic results. They will be used throughout this paper.

 A point  in $\R^{n}$ is written as
 $$
x=(x_1,\cdots,x_{n-1}, x_n)=(x',x_n)$$ and
as $x=(x_1,x'',x_n) $ if $n\geq 3$.

Let $\R_+^n= \{x\in \R^n|\, x_n>0\} $ be the upper half-space.  Then $\overline{\R_+^n}= \{x\in \R^n|\, x_n\geq 0\}$.  We will take the convention
$$R^{n-1}=\partial \R_+^n = \{x\in \R^n|\, x_n=0\}, \ \  R^{n-2}=\{x\in \R^n|\, x_1= x_n=0\}$$
from time to time.

 Denote by $\I$ (or $\I'$, $\I''$) the identity matrices of size $n$ (or $n-1$, $n-2$),  $B_r(x)$ (or $B_r'(x')$, $B_r''(x'')$) the ball of radius $r$ centered at $x$ (or $x'$, $x''$)
 in $\R^n$ (or $\R^{n-1}$, $\R^{n-2} $ respectively), $B_r^+(x)=\{y\in B_r(x):\; y_n>0\}$ and $\P x= x'$ is the projection mapping onto the $\R^{n-1}$.

The oscillation function of a function $W$ over a domain $E$ is denoted by
$$Osc_{E}W=\sup_{y,z\in  E}|W(y)-W(z)|.$$

Suppose that $u$ satisfies the assumption of Theorem \ref{liouville theorem}.  Since $u$ is continuous up to boundary,  we  have
\begin{equation}\label{section innitial assump}
	S_1(0) \supset B_{\kappa_0}^+(0)
\end{equation}
for some $\kappa_0>0$, where we have used notation \eqref{section definition} (Also see (2.4) below).

When $n\geq 3$,   the quadratic growth condition \eqref{growth condition} will be replaced by
\begin{equation}\label{growth innitial assump}
	u(0,x'',0) <\frac{K}{2}(|x''|^2+1) , \forall x''\in \R^{n-2}
\end{equation} for  a constant $K>0$.
We will  use {\sl  $\sigma_i:[0,\infty) \to [0,\infty)$,  $(i = 0, 1,2 )$, to denote some universal strictly increasing $C^0$ function with $\sigma_i(0)=0$ that depends only on $(n,a,\kappa_0, K)$. }

$\lambda $ and $ \Lambda $  will denote known positive constants.
  We always use the {\sl convention that $c$ or  $C$   denotes various positive constants   depending  only on
$n,a$,$\kappa_0, K $, $\lambda $ and $\Lambda$, } which are often called as  {\sl universal constants}.

 If not specified,  the $\operatorname{det}D^2 u$ will be understood as the general Monge-Amp\`ere measure of convex functions\cite{[F],[G]},
 which yields the (Alexandrov) generalized solution to the equation  $\operatorname{det}D^2 u=f$, and it is equivalent to
 the viscosity solution in the domain where $f$ is continuous.      If $u$ is a viscosity subsolution (or supersolution) of the equation
 $\operatorname{det}D^2 u=f $, we say that $u$ is a viscosity solution to the inequalities $\operatorname{det}D^2 u\geq (or \leq) f $,
 or $u$ solves these inequalities in viscosity senses.

The sub-differential of $u$ at   point $x_0$ is defined by
\[	\partial u(x_0) := \{ p|\, L:=u(x_0) + p \cdot (x-x_0)\text{ is a support plane of } u \text{ at } x_0 \}.\]
  We define $\nabla u(x_0)$ as a fixed element of $\partial u(x_0)$ for interior point. When $n=2$ or when $n\geq 3$ and  the assumption \eqref{growth condition} is satisfied, we will show  the viscosity solution to \eqref{liouville problem}, $u$,  is strictly convex
in $\R_+^n$. Then  the classical interior estimate \cite{[C1], [C3]} means that $u \in C_{loc}^{\infty}(\R_+^n)$ and $\nabla u$ is uniquely determined in $\R_+^n$.

 Take a  point $y=(y',0)\in \partial \R_+^n$.  If $u$ is convex, viscosity solution to  problem\eqref{liouville problem}, by the definition we have
\begin{equation} \label{viscosity sub equiv}
 \sup_{a\in \partial u(y) }\{ a\cdot e_n\} \leq  ay_1  ,
\end{equation}
and by   Lemma \ref{viscosity neumann equiv} in Section 3, one may define
\[  D_nu(y) = \varlimsup_{t\to 0^+}D_n u(y+te_n):=\lim_{t\to 0^+} \sup\{ p\cdot e_n:\; p\in  \partial u(y+te_n) \}. \]
And in Corollary \ref{en continues}, we will prove that there exists $p_y \in \partial u(y)$ such that
\[p \cdot e_n =  ay_1=D_n u(y).\]
Hence,  we define  {\sl $\nabla u(y)$ as a fixed $p_y$ for   any  boundary point $y \in \partial \R_+^n$}.

In this way, {\sl the section of $u$ centered at $y \in \overline{\R_+^n}$ with height $h>0$ } can be   defined by
\begin{equation}\label{2.5}
	S_{h}^{u}(y): =\left\{x \in \overline{\R_+^n}|\, u(x)<u(y)+\nabla u(y)\cdot(x-y)+h\right\},
\end{equation}
and its {\sl Neumann boundary } is  defined by
\begin{equation}\label{boundary innitial def}
	G_{h}^{u}(y) :=  S_{h}^{u}(y)  \cap  \partial \R_+^{n}.
\end{equation}
Sometimes, the notations $S_{h}^{u}(0)$, $G_{h}^{u}(0)$ , $B_r(0)$ and  $B_r^+(0)$ will be shorten as  $S_{h}$, $G_{h} $,
   $B_r$  and $B_r^+ $respectively.

 {\sl When we are going to prove Theorem \ref{liouville theorem}, we may  assume \begin{equation}\label{u=0 nabla u =0}
	 a\geq 0, \ \ u(0)=0 , \ \ \nabla u(0)=0 \; \text{ and }  u\geq 0\ \  \text {in }  	 \overline{\R_+^n}
	\end{equation}    by considering the function $u(x, -x_n)$ and subtracting the linear function $u(0)+  \nabla u(0)\cdot x $. }

\begin{Definition}[Good Shape]	\label{good shape definition}
We say that a section $S_{h}^{u}(0)$   is of good shape if  it satisfies the following two property:
	
	$S_{h}^{u}(0)$ has finite density at $0$, i.e.,
	\begin{equation*}
		c \leq \frac{\operatorname{Vol}(S_{h}^{u}({0}))}{h^{\frac{n}{2}} } \leq C,
	\end{equation*}
and
	\begin{equation*}
		c\P S_{h}^{u}({0}) \subset \P G_{h}^{u}(0) \cap (-\P G_{h}^{u}(0) ) .
	\end{equation*}
	\end{Definition}

When blowing  up at infinity, we will need to modify the class $\Theta$ to $\Theta(\delta)$.
\begin{Definition}[$\Theta( \delta)$ Class] \label{theta delta class}
	 Suppose that $n\geq 3$, $b\in R$ and $u$ satisfies
	\begin{equation*}
		D_n u(x',0)=bx_1, \forall x'\in \R^{n-1}
	\end{equation*}
	and
	\begin{equation*}
		u(0,z'',0)\leq  z''^T\Q z'' +\delta,  \forall x''\in \R^{n-2}
	\end{equation*}  for  some $( n-2) \times( n-2) $ symmetric matrix $\Q$  and $\delta >0$.
	We say $u \in\Theta( \delta)$ if the pair  $(b,\Q) $ satisfies
	\begin{equation*}
	 0 \leq b \leq c	\ \ \text{ and }  \ \  b^2 \operatorname{det}  \Q\leq C   .
	\end{equation*}
	\end{Definition}

\begin{Definition} \label{balance def}
	We say that a closed set $E\in \R^{n}$ is balance about $x$   up to a constant   $ \kappa $, if
	\begin{equation*}
		t(x-E)\subset E-x\;  \text{ for } \; t\leq c\kappa.
	\end{equation*}
	When $\kappa$ is universal, we say that $E\in \R^{n}$ is balance about $x$.
\end{Definition}

The following   lemmas will be used in our article.

Use $\operatorname{Vol}_k(E)$ to denote the $k-$dimensional Lebesgue measure and denote $\operatorname{Vol}(E)=\operatorname{Vol}_n(E)$.
\begin{Lemma} \label{balance kappa  lem}
	Given any convex set $E \in \R^n$.   Suppose the line $l(t): =x+ te_n $ intersects $E$ at the points $p,q$. Then
	\begin{equation}\label{quasi vol}
		|p_n-q_n| \cdot	\operatorname{Vol}_{n-1}(\P E)  \leq C\operatorname{Vol}(E).
	\end{equation}
	In addition, if $\P E$ is balance about $\P q$  up to  a constant $ \kappa $, then we have the  reverse inequality
	\begin{equation}\label{quasi k vol}
		|p_n-q_n| \cdot	\operatorname{Vol}_{n-1}(\P E)  \geq c\kappa \operatorname{Vol} (E).
	\end{equation}
\end{Lemma}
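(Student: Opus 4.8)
The plan is to prove both estimates by slicing $E$ into vertical fibers (segments parallel to $e_n$) over the base $\P E$ and using convexity, after a harmless normalization. Replacing $E$ by its closure (which changes neither side), we may assume $E$ is closed; translating, and reflecting $x_n\mapsto -x_n$ if necessary, we may assume $q=0$ and $p=Le_n$, where $L:=|p_n-q_n|$. The cases $L=0$, $\operatorname{Vol}_{n-1}(\P E)=0$, or $\operatorname{Vol}(E)=\infty$ make one or both inequalities trivial, so we assume $0<L$, $0<\operatorname{Vol}_{n-1}(\P E)<\infty$ and $\operatorname{Vol}(E)<\infty$. The chord $[0,Le_n]\subset E$ projects onto the single point $0=\P q$, so $0\in\P E$ and $(0,t)\in E$ for all $t\in[0,L]$.

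For \eqref{quasi vol}, I would show directly that $\operatorname{Vol}(E)\ge 2^{-n}L\operatorname{Vol}_{n-1}(\P E)$. Given $z'\in\tfrac12\P E$, write $z'=\tfrac12 y'$ with $y'\in\P E$ and choose any $(y',s)\in E$; by convexity $\tfrac12(y',s)+\tfrac12(0,t)=\big(z',\tfrac{s+t}{2}\big)\in E$ for every $t\in[0,L]$, so the vertical fiber of $E$ over $z'$ contains a segment of length $L/2$. Since fibers over distinct points of $\R^{n-1}$ are disjoint, Fubini yields $\operatorname{Vol}(E)\ge \tfrac{L}{2}\operatorname{Vol}_{n-1}\big(\tfrac12\P E\big)=2^{-n}L\operatorname{Vol}_{n-1}(\P E)$, which is \eqref{quasi vol}.

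For the reverse estimate \eqref{quasi k vol} I would trap $E$ between two affine graphs over $\P E$ by using support planes at the two endpoints of the chord. Since $0=q$ and $Le_n=p$ lie on $\partial E$, pick supporting half-spaces there, with inner normal $\nu$ at $q$ and outer normal $\mu$ at $p$; because $p\in E$ and $q\in E$ one gets $\nu\cdot e_n\ge 0$ and $\mu\cdot e_n\ge 0$. If $\nu\cdot e_n=0$ or $\mu\cdot e_n=0$, the corresponding support plane is vertical, so $\P E$ lies in a closed half-space of $\R^{n-1}$ whose boundary hyperplane passes through $0=\P q$; the balance hypothesis gives $-\tau\,\P E\subset\P E$ with $\tau:=c\kappa$ (Definition \ref{balance def}), which then forces $\P E$ into that hyperplane, so $\operatorname{Vol}_{n-1}(\P E)=\operatorname{Vol}(E)=0$ and \eqref{quasi k vol} holds trivially. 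Otherwise normalize $\nu=(\nu',1)$ and $\mu=(\mu',1)$ to obtain
\[ -\nu'\cdot y'\ \le\ y_n\ \le\ L-\mu'\cdot y'\qquad\text{for every }y=(y',y_n)\in E. \]
Comparing the two sides along each fiber yields $v'\cdot y'\le L$ for all $y'\in\P E$, where $v':=\mu'-\nu'$; applying this to $-\tau y'\in\P E$ in place of $y'$ upgrades it to $-L/\tau\le v'\cdot y'\le L$ on $\P E$. Since along the fiber over $y'$ the last coordinate ranges in an interval of length at most $(L-\mu'\cdot y')-(-\nu'\cdot y')=L-v'\cdot y'$, Fubini gives $\operatorname{Vol}(E)\le\int_{\P E}(L-v'\cdot y')\,dy'\le(1+\tau^{-1})\,L\operatorname{Vol}_{n-1}(\P E)$, and since we may assume $\tau\le1$ (otherwise $\P E$ degenerates to a point) this is $\le C\kappa^{-1}L\operatorname{Vol}_{n-1}(\P E)$, i.e.\ \eqref{quasi k vol}.

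The only real subtlety I anticipate is this last step: one must observe that the single linear inequality $v'\cdot y'\le L$ coming from the two support planes is promoted, thanks to the balance condition, to the two-sided bound $|v'\cdot y'|\le L/\tau$ on $\P E$, and this is exactly what controls the wedge trapped between the two graphs. The closure and measurability technicalities (that $p,q$ may be taken on $\partial E$, that $\P E$ is measurable) are routine because in every application $E$ is a bounded convex body.
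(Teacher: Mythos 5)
Your argument is correct, and its skeleton (Fubini over the vertical fibers of $E$, plus the reflection $-\tau\,\P E\subset \P E$ supplied by the balance condition) is the same as the paper's. The paper packages both inequalities through the single concave fiber-length function $L(x')=\sup\{(y-z)\cdot e_n:\,y,z\in E,\ \P y=\P z=x'\}$: it notes $\operatorname{Vol}(E)\sim\|L\|_{L^\infty}\operatorname{Vol}_{n-1}(\P E)$ (the cone under a nonnegative concave function), which gives \eqref{quasi vol} since $|p_n-q_n|=L(\P q)\le\|L\|_{L^\infty}$, and then gets \eqref{quasi k vol} from the one-line concavity estimate $L(\P q)\ge c\kappa\|L\|_{L^\infty}$ along the segment joining the maximizer of $L$ to its reflection through $\P q$. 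Your proof of \eqref{quasi vol} is exactly the lower half of this (your midpoint computation is the statement $L(z')\ge\tfrac12 L(\P q)$ on $\tfrac12\P E$, i.e.\ concavity plus $L\ge0$). Where you genuinely diverge is \eqref{quasi k vol}: instead of bounding $\|L\|_{L^\infty}$ by $L(\P q)$ via concavity, you build an explicit affine majorant $y'\mapsto L-v'\cdot y'$ of the fiber length from the support hyperplanes of $E$ at $p$ and $q$ (which touches the fiber-length function at $\P q$), and control its oscillation over $\P E$ by the reflection trick. The two mechanisms are dual — concavity from below versus an affine support from above — and yield the same constant up to dimension; yours is longer but entirely self-contained and makes the role of the endpoints $p,q\in\partial E$ explicit, while the paper's is shorter but leaves the concavity of $L$ and the equivalence $\operatorname{Vol}(E)\sim\|L\|_{L^\infty}\operatorname{Vol}_{n-1}(\P E)$ to the reader. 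Your handling of the degenerate cases (vertical support plane, $\tau>1$) is sound and is simply omitted in the paper.
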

\begin{proof}
	Consider the non-negative concave  function defined on $\P E$
	\[ L(x')=\sup\{    (y-z)\cdot e_n|\,  y, z\in E,\  \text{ such that  }  \P y=\P z=x' \}.\]
	Then
	\[\operatorname{Vol} (E) \sim  ||L||_{L^{\infty}} \operatorname{Vol}_{n-1}(\P E),   \]
	which yields (2.7).   Now, suppose that $\P E$ is balance about $\P q$  up to the constant $ \kappa $. By concavity $L(\P q)  \geq c\kappa ||L||_{L^{\infty}}$, and \eqref{quasi k vol} follows.
\end{proof}

\begin{Lemma} \label{boundary nabla estimate}
	Let $u\in C(\overline{B_1^+(0) })$ be a convex function. Suppose that
	\begin{equation*}
		0\leq u \leq \sigma_1(|x|) \text{ in } B_c^+(0)
	\end{equation*}
	and
	\begin{equation*}
		\varlimsup_{t\to 0^+}D_n u(x',t) \geq -\sigma_0(|x'|), \forall (x' ,0)  \in  \overline{B_c^+(0)}.
	\end{equation*}
Then
\begin{equation}\label{boundary nabla estimate equation}
|\nabla u(y)| \leq C\left( \inf_{ |y| \leq t \leq c}\frac{2\sigma_1(1+t)}{t}+\sigma_0(|y'|)\right) \text{ in } B_c^+(0).
\end{equation}
 Here  we do not require $\sigma_i(0)=0\; (i=0, 1).$
\end{Lemma}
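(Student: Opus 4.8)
The plan is to control $|\nabla u(y)|$ at a point $y = (y',y_n) \in B_c^+(0)$ by separating the tangential and normal components of a sub-gradient $p \in \partial u(y)$. For the normal component, I would use both one-sided bounds: the lower bound $\varlimsup_{t\to 0^+} D_n u(x',t) \geq -\sigma_0(|x'|)$ together with convexity gives that for any $p \in \partial u(y)$ one has $p \cdot e_n \geq -\sigma_0(|y'|)$ (sliding the support plane down the vertical line to the boundary, where convexity of $t \mapsto u(y',t)$ forces $p\cdot e_n$ to exceed the boundary normal-derivative lower bound). For the upper bound on $p \cdot e_n$, evaluate the support plane $L(x) = u(y) + p\cdot(x-y)$ at a point further up the $e_n$-axis, say at height $t$ with $|y| \leq t \leq c$: convexity of $u$ gives $u(y', 2t - y_n) \geq u(y) + p\cdot e_n\,(2t - 2y_n)$, and using $0 \leq u \leq \sigma_1(|x|) \leq \sigma_1(1+t)$ on $B_c^+$ yields $p\cdot e_n \leq \frac{\sigma_1(1+t) }{2(t - y_n)} \leq C\,\frac{2\sigma_1(1+t)}{t}$ after absorbing $y_n \leq |y| \leq t$ into the constant. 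Taking the infimum over admissible $t$ gives the bound on $|p\cdot e_n|$ matching the right-hand side of \eqref{boundary nabla estimate equation}.

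For the tangential component, I would argue similarly but using horizontal segments inside $B_c^+(0)$. Fix a unit vector $\tau \perp e_n$. Since $0 \leq u$ and $u(y) \leq \sigma_1(|y|)$, and since $y + s\tau$ stays in $B_c^+(0)$ for $|s|$ up to some universal multiple of $t$ (again $|y| \leq t$), convexity applied to the support plane along the segment from $y - s\tau$ to $y + s\tau$ gives $p\cdot\tau \leq \frac{u(y+s\tau) - u(y)}{s} \leq \frac{\sigma_1(1+t)}{s}$ and, using the other endpoint, $p\cdot\tau \geq -\frac{\sigma_1(1+t)}{s}$; choosing $s$ comparable to $t$ produces $|p\cdot\tau| \leq C\frac{\sigma_1(1+t)}{t}$. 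Combining the normal and tangential estimates and optimizing over $t \in [|y|, c]$ yields \eqref{boundary nabla estimate equation}.

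The main technical obstacle is making the geometry honest: one must check that the comparison points $(y', 2t - y_n)$ and $y \pm s\tau$ genuinely lie in the set $B_c^+(0)$ (or at least in $\overline{B_1^+(0)}$ where $u$ is defined) for the chosen range of $t$ and $s$, and that the factor $2$ and the shift by $1$ in $\sigma_1(1+t)$ are what fall out — this is why the statement carries the somewhat awkward $\tfrac{2\sigma_1(1+t)}{t}$ rather than a cleaner expression, and why it is explicitly noted that $\sigma_i(0) = 0$ is not needed. A second point requiring care is the passage from the one-sided limit hypothesis on $D_n u$ to a genuine lower bound on $p\cdot e_n$ for every sub-gradient at an interior point $y$: this uses only that $t \mapsto u(y',t)$ is convex, so its left difference quotients at $y_n$ are bounded below by $\liminf_{t\to 0^+}$ of the derivative from above, but one should phrase it so that the $\varlimsup$ in the hypothesis (rather than a genuine limit) still suffices, which it does since we only need a lower bound.
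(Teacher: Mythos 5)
Your argument is correct and is essentially the paper's own: the paper also lower-bounds $p\cdot e_n$ by $-\sigma_0(|y'|)$ via convexity and the boundary condition, and then obtains the upper bound on the remaining components by evaluating the support plane at a comparison point a distance $\sim t$ from $y$ inside the half-ball and using $0\leq u\leq\sigma_1$; the only difference is that the paper uses the single diagonal point $z=y+a|y|\bigl(\tfrac{p'}{|p'|},1\bigr)$ to get $|p'|+p_n\leq 2\sigma_1\bigl((1+a)|y|\bigr)/(a|y|)$ in one stroke, whereas you treat the tangential and normal components separately. One small cleanup: your normal comparison point $(y',2t-y_n)$ gives a denominator $2(t-y_n)$ that degenerates when $y_n$ is close to $t$, so it is safer to use $(y',y_n+t)$ (or restrict to $t\geq 2|y|$), which yields $p_n\leq \sigma_1(\cdot)/t$ directly.
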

\begin{proof}
	Given $y \in B_{c}^+(0)$, $c \leq \frac{1}{2}$, let $p \in \partial u(y) $. The assumption and the convexity implies that
	\[	p_n    \geq -\sigma_0(|y'|).\]
	Suppose that $| p'|>0$. Note that  for every constant $a \in [0,1]$, the point $z=y+a| y|(\frac{p'}{|p'|},1)$ is in $ B_1^+(0)$.
Hence, by the convexity we have
	\[ a|y|(p_n+|p'| )=p\cdot (z-y) \leq u(z)-u(y) \leq 2\sigma_1((1+a)|y|).\]
   Letting $t=a|y|$,  we obtain the desired \eqref{boundary nabla estimate equation}.  We may
   ignore the estimation for $y\in \pom $, because $\nabla u(y)$ in this paper is determined by   taking the internal limit.	(See
   the above definition for $D_nu(y)$ and Corollary \ref{en continues}).
\end{proof}

Similar to Lemma \ref{boundary nabla estimate}, we can prove
\begin{Lemma}\label{interior nabla estimate}
	Write $x\in \R^n$ as $x=(x^1,\cdots,x^k)$, $x^j \in \R^{a_j}$, $\sum_{j=1}^{k} a_j=n$. Let $u\in C( \bar B_r(0) )$ be a convex function, satisfying
	\[u(x) \leq u(0)+ \sum_{i=1}^{k} \sigma_i(|x^i|).\]
	Then
	\[	|\nabla_{x_j}u(0)| \leq C \inf_{ 0 \leq t \leq cr}\frac{2\sigma_j(Ct)}{t}.\]
\end{Lemma}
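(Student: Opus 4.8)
The plan is to mimic the proof of Lemma \ref{boundary nabla estimate}. Since the origin is an interior point of $\bar B_r(0)$, there is no Neumann boundary to respect, so the test point may be taken straight inside the $x^j$-coordinate subspace rather than tilted into $e_n$. Fix $j\in\{1,\dots,k\}$ and let $p\in\partial u(0)$, written in blocks as $p=(p^1,\dots,p^k)$ with $p^j\in\R^{a_j}$. We may assume $|p^j|>0$, since otherwise there is nothing to prove. For $0<t\le cr$ (with $c\le 1$), let $z=z(t)$ be the point whose $x^j$-block equals $t\,p^j/|p^j|$ and whose remaining blocks vanish; then $|z|=t\le r$, so $z\in\bar B_r(0)$ and the growth hypothesis applies at $z$.

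The support plane of $u$ at $0$ gives $u(z)-u(0)\ge p\cdot z=t\,|p^j|$, while the growth hypothesis, together with $\sigma_i(0)=0$ for $i\ne j$, gives $u(z)-u(0)\le\sum_{i=1}^k\sigma_i(|z^i|)=\sigma_j(t)$. Hence $t\,|p^j|\le\sigma_j(t)$, i.e.
\[|p^j|\le\frac{\sigma_j(t)}{t}\qquad\text{for every }t\in(0,cr].\]
Taking the infimum over such $t$, and noting that this holds for every $p\in\partial u(0)$ and in particular for the fixed choice $\nabla u(0)$, we get
\[|\nabla_{x_j}u(0)|\le\inf_{0<t\le cr}\frac{\sigma_j(t)}{t}\le C\inf_{0\le t\le cr}\frac{2\sigma_j(Ct)}{t},\]
where the last inequality uses only that $\sigma_j$ is increasing and $C\ge1$; this is the asserted estimate, the factor $2$ and the dilation $Ct$ being harmless slack kept only to match the formulation of Lemma \ref{boundary nabla estimate}.

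There is no real obstacle here: the entire content is the one-line comparison of the support-plane inequality with the coordinatewise upper bound, and the estimate decouples across the blocks precisely because moving purely in the $x^j$-block annihilates all the other $\sigma_i$-terms. The only minor points are keeping the test point inside the ball (whence the restriction $t\le cr$ with $c\le1$) and, as in Lemma \ref{boundary nabla estimate}, having $\sigma_i(0)=0$ so that the unused blocks contribute nothing.
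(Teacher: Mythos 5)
Your proof is correct and follows exactly the route the paper intends: the paper omits the proof entirely, saying only that it is "similar to Lemma \ref{boundary nabla estimate}", and your argument is precisely that adaptation — compare the support-plane inequality $u(z)-u(0)\ge t|p^j|$ at the test point $z$ lying in the $x^j$-block with the coordinatewise upper bound $\sigma_j(t)$, the other blocks dropping out since $\sigma_i(0)=0$. Your observations that no boundary tilt is needed at an interior point and that $\sigma_i(0)=0$ is genuinely required for the unused blocks are both apt.
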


\begin{Corollary} \label{small perturb means gradient lemma}
	Let $\epsilon>0$ be small constant,  $u\in C(\overline{B_1^+(0) })$ and $v \in C^2(\overline{B_1^+(0) })$ be two convex functions. Suppose that
	\begin{equation*}
		||u-v||_{L^{\infty}} \leq \epsilon
	\end{equation*}
	and
	\begin{equation*}
		\varlimsup_{t\to 0^+}D_n u(x',t) \geq D_nv(x',0)-\epsilon^{\frac{1}{2}}, \forall (x ,0)  \overline{B_1^+(0) } .
	\end{equation*}
	Then
	\begin{equation}\label{small perturb means gradient}
		|\nabla u(y)-\nabla v(y)| \leq  C_v\epsilon^{\frac{1}{2}}\text{ in } B_c^+(0),
	\end{equation} where $C_v$ is aconstant which may depends on $||v||_{C^2}$.
\end{Corollary}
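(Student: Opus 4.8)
The plan is to deduce this from the one-sided gradient estimates already established — Lemma \ref{interior nabla estimate} for points away from $\partial\R_+^n$, and a local version of the argument in Lemma \ref{boundary nabla estimate} for points near $\partial\R_+^n$ — applied not to $u-v$ (which need not be convex) but to the function $g$ obtained by subtracting from $u$ the first-order Taylor polynomial of $v$ at the point under consideration, and examined at the scale $\epsilon^{1/2}$.

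First I would fix $y\in B_c^+(0)$, set $M_v:=\|D^2v\|_{L^\infty(\overline{B_1^+(0)})}$, and let $g:=u-\ell_y$ where $\ell_y(x)=v(y)+\nabla v(y)\cdot(x-y)$. Then $g$ is convex, $\nabla g(y)=\nabla u(y)-\nabla v(y)$, and from $\|u-v\|_{L^\infty}\le\epsilon$ together with the $C^2$ bound on $v$ I obtain the two-sided control $|g(x)|\le\epsilon+\tfrac12 M_v|x-y|^2$ near $y$. Because $v\in C^2$, the hypothesis on the normal derivative also descends to $g$, in the form $\varlimsup_{t\to0^+}D_ng(x',t)\ge D_nv(x',0)-\epsilon^{1/2}-\nabla v(y)\cdot e_n\ge -\epsilon^{1/2}-M_v|(x',0)-y|$.

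Next I would split according to $y_n=\operatorname{dist}(y,\partial\R_+^n)$. If $y_n\ge\epsilon^{1/2}$, the ball $B_{y_n}(y)$ stays inside $\R_+^n\cap B_1(0)$, the upper control gives $g(x)\le g(y)+2\epsilon+\tfrac12 M_v|x-y|^2$ there, and Lemma \ref{interior nabla estimate} — centred at $y$, with $k=1$ and $\sigma_1(s)=2\epsilon+\tfrac12 M_vs^2$ — gives, upon choosing $t\sim\epsilon^{1/2}$ in its infimum, $|\nabla u(y)-\nabla v(y)|=|\nabla g(y)|\le C_v\epsilon^{1/2}$. If $y_n<\epsilon^{1/2}$, I would instead work on the half-ball $B_R^+(\bar y)$ with $\bar y=(y',0)$ and $R$ of order $\epsilon^{1/2}$, and, to meet the sign hypothesis of Lemma \ref{boundary nabla estimate}, apply its argument not to $g$ but to $\hat g:=g+\epsilon+M_vR^2+M_v|x-\bar y|^2$: using $|x-y|^2\le 2|x-\bar y|^2+2y_n^2$ one checks $0\le\hat g(x)\le 2\epsilon+2M_vR^2+2M_v|x-\bar y|^2$ on $B_R^+(\bar y)$, and the normal-derivative bound for $g$, together with $D_n(M_v|x-\bar y|^2)=2M_vx_n\ge0$, gives $\varlimsup_{t\to0^+}D_n\hat g(x',t)\ge -(1+M_v)\epsilon^{1/2}-M_v|x'-y'|$. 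Re-running the ``vertical line plus convexity'' step of Lemma \ref{boundary nabla estimate} for $\hat g$ at $y$ on this half-ball — move from $y$ in the direction $(p'/|p'|,1)$ a distance $\sim R$ and test convexity against $\hat g\le 2\epsilon+CM_vR^2$ — produces $|\nabla\hat g(y)|\le C(\epsilon/R+M_vR+(1+M_v)\epsilon^{1/2})$, which for $R\sim\epsilon^{1/2}$ is $C_v\epsilon^{1/2}$; since $\nabla\hat g(y)-\nabla g(y)=2M_v(y-\bar y)$ has norm $2M_vy_n\le2M_v\epsilon^{1/2}$, again $|\nabla u(y)-\nabla v(y)|\le C_v\epsilon^{1/2}$.

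The obstacle here is not conceptual but one of scaling and bookkeeping: the working radius must be taken exactly of order $\epsilon^{1/2}$, which is the minimiser of $\epsilon/R+M_vR$, so that dividing the $O(\epsilon)$-sized part of the barrier by this radius produces the claimed gain $\epsilon^{1/2}$ rather than $O(1)$; one must keep separate the contributions genuinely small in $\epsilon$ from those that are merely $O(M_v)$ (the latter absorbed into $C_v$); and near the boundary one must restore nonnegativity of the test function as Lemma \ref{boundary nabla estimate} demands, checking that the extra quadratic changes the gradient at $y$ only by $O(M_v\epsilon^{1/2})$. Finally, Lemmas \ref{interior nabla estimate} and \ref{boundary nabla estimate} are being used in their evident translated (and, in the interior case, shrunk) forms, which is legitimate because convexity, the one-sided sup bound, and the $\varlimsup$ of the normal derivative are all invariant under translation along $\partial\R_+^n$ and under rescaling.
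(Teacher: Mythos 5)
Your argument is correct and is essentially the paper's own proof: the paper also subtracts the tangent plane $v(y)+\nabla v(y)\cdot(x-y)$ from $u$, shifts by $O(\epsilon)$ to get a nonnegative convex function trapped between $0$ and $C_v|x-y|^2+C\epsilon$, transfers the normal-derivative hypothesis using $|D_nv(x',0)-D_nv(y)|\le C_v|x'-y'|$, and then invokes Lemmas \ref{boundary nabla estimate} and \ref{interior nabla estimate} with the infimum realized at $t\sim\epsilon^{1/2}$. The only cosmetic difference is that the paper restores nonnegativity by adding the constant $2\epsilon$ (using convexity of $v$, so that $u\ge v-\epsilon\ge\ell_y-\epsilon$), whereas you add an auxiliary quadratic near the boundary; both work and yield the same $C_v\epsilon^{1/2}$ bound.
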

\begin{proof}
	Given point $y \in B_c^+(0)$.  Consider the function
	 \[ U^y(x) = u(x) +2\epsilon-v(y) -\nabla v(y) \cdot (x-y).\]
	 Then by the assumption we have
	 \[  0\leq  U^y(x) \leq 2\epsilon+v(x) +2\epsilon-v(y) -\nabla v(y) \cdot (x-y) \leq C_v |x-y|^2+4\epsilon,\]
	 and
	 \[ \varlimsup_{t\to 0^+}D_n  U^y(x',t) \geq D_n v(x', 0) -\epsilon^{\frac{1}{2}}-D_nv(y',0)
\geq -C_v|x'-y'|-\epsilon^{\frac{1}{2}}.\]
	 Regarding  $y$ as origin and  applying Lemmas \ref{boundary nabla estimate} and \ref{interior nabla estimate} to $U^y(x)$, we get the desired results.
\end{proof}

The following  is well-known John's Lemma. See \cite{[F], [G],[John]} for example.
\begin{Lemma}[Jhon's Lemma] \label{Jhon Lemma}
If
 $\Omega $ is a bounded convex  set with non-empty interior in $\R^n$ and  $E$
 is the ellipsoid of smallest volume containing $\Omega , $ then after an affine transformation $T,$
 \begin{equation*}
 	B_{c(n)} \subset T(\Omega) \subset B_{C(n)}:=T(E).
 \end{equation*}
\end{Lemma}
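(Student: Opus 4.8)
This is the classical theorem of F.~John, so one legitimate option is simply to invoke \cite{[F],[G],[John]}; here I record how I would prove it. \emph{Reduction.} Since $E$ is an ellipsoid there is an invertible affine map $T$ with $T(E)=B_1$, the Euclidean unit ball centered at the origin, so we may take $C(n)=1$. An invertible affine map multiplies all volumes by $|\det T|$, hence it preserves the relation ``$E$ is the smallest ellipsoid containing $\Omega$''; thus $B_1$ is the minimal-volume ellipsoid containing the convex body $\Omega_0:=T(\Omega)$, and $\Omega_0\subset B_1$. Everything reduces to the one nontrivial point: \emph{if the minimal-volume ellipsoid of a bounded convex body $\Omega_0$ with nonempty interior is the unit ball, then $B_{c(n)}(0)\subset\Omega_0$ for a dimensional constant $c(n)>0$.}

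\emph{Optimality conditions.} First I would record F.~John's first-order characterization of the minimizing ellipsoid. The ellipsoids containing $\Omega_0$ of volume at most $\operatorname{Vol}(B_R)$ (for $R$ large) form a compact family, so the minimum is attained by $B_1$; a Lagrange-multiplier / perturbation computation on the finite-dimensional space of ellipsoids then yields finitely many contact points $u_1,\dots,u_m\in\partial B_1\cap\overline{\Omega_0}$ and weights $c_i>0$ with
\[ \sum_i c_i u_i=0,\qquad \sum_i c_i\,u_i\otimes u_i=\I. \]
Since each $u_i\in\overline{\Omega_0}$ and $\Omega_0$ is convex, $P:=\operatorname{conv}\{u_1,\dots,u_m\}\subset\overline{\Omega_0}$, and the first identity gives $0\in P$. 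It then remains to bound the inradius of $P$ from below. Taking the trace of the second identity gives $\sum_i c_i=n$; pairing the first identity with a fixed unit vector $e$ and the second with $e$ on both sides gives $\sum_i c_i(u_i\cdot e)=0$ and $\sum_i c_i(u_i\cdot e)^2=1$. Regard $\mu_i:=c_i/n$ as a probability vector and put $X:=u_i\cdot e$, so $X\in[-1,h(e)]$ where $h(e):=\max_i u_i\cdot e\ (\ge 0)$ is the support function of $P$; the identities read $\mathbb E_\mu[X]=0$, $\mathbb E_\mu[X^2]=\frac1n$. Writing $X=X^+-X^-$ and using $(X^+)^2\le h(e)X^+$, $(X^-)^2\le X^-$, and $\mathbb E_\mu[X^+]=\mathbb E_\mu[X^-]\le h(e)$, we obtain $\frac1n\le h(e)\big(h(e)+1\big)$, hence $h(e)\ge c(n):=\frac12\big(\sqrt{1+4/n}-1\big)>0$. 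As this holds for all unit $e$, $B_{c(n)}(0)\subset P\subset\overline{\Omega_0}$; since $\Omega_0$ has nonempty interior, $0$ is an interior point, so $B_{c(n)}(0)\subset\Omega_0$ after shrinking $c(n)$ slightly, which completes the proof.

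\emph{An elementary alternative, and the main obstacle.} If I wanted to avoid John's multiplier conditions I would argue by contradiction directly: if a point $p$ with $|p|<c(n)$ is missing from $\Omega_0$, a separating hyperplane places $\Omega_0$ inside a cap $B_1\cap\{x\cdot e\ge s\}$ with $s>-c(n)$, and a short explicit computation (translating the center of $B_1$ along $e$ and slightly shrinking the equatorial radius) produces an ellipsoid of volume strictly less than $\operatorname{Vol}(B_1)$ still containing that cap, contradicting minimality; tracking the threshold in that computation yields an admissible $c(n)$ (one may take $c(n)=\frac1n$). In both routes the crux is the same soft fact in two disguises: the Fritz John identities (attainment of the minimum plus the perturbation argument), or the explicit ``cap compression'' inequality. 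Neither is difficult, but each is where the real work sits; and since only the weak form with \emph{some} dimensional constants $c(n)\le C(n)$ is needed in this paper, the sharp value $c(n)=1/n$ (tight for the simplex) is irrelevant, which is why citing \cite{[F],[G],[John]} suffices.
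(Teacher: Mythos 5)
Your proposal is correct. The paper itself offers no argument for this lemma at all --- it simply states it as the well-known John's Lemma and points to \cite{[F]}, \cite{[G]}, \cite{[John]} --- so your first option (citing) is exactly what the paper does, and your written-out argument goes strictly beyond it. The argument you give is the standard and correct one: normalizing the minimal ellipsoid to $B_1$, invoking John's contact-point identities $\sum_i c_iu_i=0$, $\sum_i c_iu_i\otimes u_i=\I$, and then deducing a lower bound on the support function of $\operatorname{conv}\{u_i\}$ in every direction. Your probabilistic computation is sound: from $\mathbb E_\mu[X]=0$, $\mathbb E_\mu[X^2]=\tfrac1n$, $X\in[-1,h(e)]$ one indeed gets $\tfrac1n\le h(e)(h(e)+1)$, hence $h(e)\ge c(n)>0$, and since a closed convex set contains $B_c(0)$ exactly when its support function is $\ge c$ in every direction, the inclusion $B_{c(n)}\subset P\subset\overline{\Omega_0}$ follows; passing to the open set $\Omega_0$ costs nothing for a convex body. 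You are also right to flag that the one genuinely nontrivial ingredient --- the existence of the contact points and weights, which you only sketch via a Lagrange-multiplier argument --- is precisely the content of the cited classical theorem; for the purposes of this paper, where only the existence of dimensional constants $c(n)\le C(n)$ is used (and not the sharp value $1/n$), either your self-contained route or the bare citation is fully adequate.
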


\begin{Lemma} \label{comparison principle}
	If $\Omega $ is a bounded convex  set in $\R^n$ and $u, v\in C(\overline{\Omega})$  are convex functions in  $\Omega$  satisfying
\begin{equation*}
\begin{cases}
 \operatorname{det} D^2v \leq  \operatorname{det} D^2u        &  \text{ in } \Omega\\
 u\leq v                  & \text{ on }  \partial  \Omega \\
\end{cases},
 \end{equation*}
 then $u \leq v $ on $\overline{\Omega}$.
 \end{Lemma}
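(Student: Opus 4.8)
The plan is to run the classical Monge--Amp\`ere comparison argument (in the spirit of \cite{[G]}): combine the order-reversing behaviour of the Monge--Amp\`ere measure on convex functions with a small quadratic perturbation of $u$ that manufactures a strict gap between the two measures, and then derive a contradiction on a suitable sub-level set.

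First I would record the basic monotonicity lemma: if $W$ is open and bounded, $w_1,w_2$ are convex on $W$ and continuous on $\overline W$ with $w_1\le w_2$ in $W$ and $w_1=w_2$ on $\partial W$, then $\partial w_2(W)\subset\partial w_1(W)$, hence, writing $\mu_w:=\operatorname{det}D^2w$ for the Monge--Amp\`ere measure and using $\mu_w(E)=|\partial w(E)|$ for Borel $E$, $\mu_{w_1}(W)\ge\mu_{w_2}(W)$. For the inclusion I would take $p\in\partial w_2(x_1)$ with $x_1\in W$ and slide the support plane $\ell(x)=w_2(x_1)+p\cdot(x-x_1)$: since $w_1-\ell\ge w_2-\ell\ge0$ on $\partial W$ while $w_1(x_1)-\ell(x_1)=w_1(x_1)-w_2(x_1)\le0$, the minimum of $w_1-\ell$ over $\overline W$ is attained at an interior point $x_*\in W$, and there $p\in\partial w_1(x_*)$; this is precisely the standard argument behind the Monge--Amp\`ere comparison principle, for which I would cite \cite{[F],[G]}.

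Then I would argue by contradiction. Suppose $u(x_0)>v(x_0)$ for some $x_0\in\Omega$; since $u-v\le0$ on $\partial\Omega$, the number $m:=\max_{\overline\Omega}(u-v)$ is positive and attained at an interior point, which I take to be $x_0$. Setting $R:=\operatorname{diam}\Omega$ and, for small $\epsilon>0$,
$$ \tilde u(x):=u(x)+\tfrac{\epsilon}{2}\big(|x-x_0|^2-R^2\big),$$
I get a convex function with $\tilde u\le u\le v$ on $\partial\Omega$ and $\tilde u(x_0)-v(x_0)=m-\tfrac{\epsilon}{2}R^2>0$ once $\epsilon$ is small, and --- by superadditivity of the Monge--Amp\`ere measure under addition of a convex function, which at the $C^2$ level is $\operatorname{det}(A+B)\ge\operatorname{det}A+\operatorname{det}B$ for $A,B\ge0$ and passes to generalized solutions as in \cite{[G]} --- one has $\mu_{\tilde u}\ge\mu_u+\epsilon^n\,dx\ge\mu_v+\epsilon^n\,dx$ in $\Omega$. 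I would then fix $\delta\in\big(0,\max_{\overline\Omega}(\tilde u-v)\big)$ and put $W:=\{x\in\Omega:\tilde u(x)>v(x)+\delta\}$, which is open, nonempty and satisfies $\overline W\Subset\Omega$ (because $\tilde u-v-\delta\le-\delta<0$ on $\partial\Omega$), with $\tilde u=v+\delta$ on $\partial W$ and $\tilde u\ge v+\delta$ in $W$. Applying the monotonicity lemma with $w_1=v+\delta$ and $w_2=\tilde u$ gives $\mu_v(W)=\mu_{v+\delta}(W)\ge\mu_{\tilde u}(W)$, whereas the measure bound gives $\mu_{\tilde u}(W)\ge\mu_v(W)+\epsilon^n\operatorname{Vol}(W)>\mu_v(W)$ since $\operatorname{Vol}(W)>0$ (and both quantities are finite because $\overline W\Subset\Omega$). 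This contradiction forces $u\le v$ in $\Omega$, hence on $\overline\Omega$.

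I expect the main obstacle to be not an idea but the measure-theoretic bookkeeping: establishing the subdifferential-image inclusion in the monotonicity lemma with the correct null-set caveats, and --- the more delicate point --- justifying the strict gain $\mu_{\tilde u}\ge\mu_u+\epsilon^n\,dx$ for genuinely non-smooth convex $u$. Both are classical facts available in \cite{[G]}, which one could equally invoke directly for the whole lemma.
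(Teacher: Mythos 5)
Your proof is correct. The paper does not actually prove this lemma --- it is a one-line citation to Theorem 2.10 of \cite{[F]} --- and your argument is precisely the standard proof found there and in \cite{[G]} (Theorem 1.4.6): the subdifferential monotonicity lemma $\partial w_2(W)\subset\partial w_1(W)$ obtained by sliding support planes to an interior minimum, combined with the quadratic perturbation $\tilde u=u+\tfrac{\epsilon}{2}(|x-x_0|^2-R^2)$ that creates a strict measure gap on the compactly contained level set $W=\{\tilde u>v+\delta\}$. The one step that genuinely requires outside input is the superadditivity $\mu_{\tilde u}\ge\mu_u+\epsilon^n\,dx$ for non-smooth $u$; you correctly flag this as the delicate point, and deferring it (or the whole lemma) to \cite{[G]} is exactly what the paper itself does.
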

\begin{proof}
 See Theorem 2.10 in \cite{[F]}.
\end{proof}

\begin{Lemma}[ Comparison principle for mixed problem] \label{comparison principle for mixed problem}
 Supposed that $\Omega$ is a bounded convex set in $\R^n$,   $ G_1\subset\partial \Omega$
  and $G_2=\partial  \Omega  \backslash G_1  $ are piecewise $C^1$ smooth, and  $\beta$ is
     inward vector fieled on $ \partial \Omega_2 $.
     If $u, v$ are continuous, convex function defined on $\overline{\Omega}$ such that  either   $u$ or $v$ is $C^2$ smooth, satisfying
\begin{equation}\label{comparison problem 2.9}
\begin{cases}
\operatorname{det} D^2v < \operatorname{det} D^2u   <  \infty    & \text{ in } \Omega\\
u < v                   &\text{ on }     G_1 \\
 D_{\beta}v < D_{\beta}u                     & \text{ on }   G_2
\end{cases}
\end{equation}
 in the viscosity sense, then $u < v $ on $\overline{\Omega}$.
  \end{Lemma}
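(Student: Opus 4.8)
The plan is to argue by contradiction from the maximum of $u-v$, and to exploit the three strict inequalities of \eqref{comparison problem 2.9} one at a time according to where that maximum sits. Suppose $u<v$ fails on $\overline{\Omega}$, so that $M:=\max_{\overline{\Omega}}(u-v)\ge 0$ is attained at some $x_0\in\overline{\Omega}$. Since $u<v$ on $G_1$, there $u-v<0\le M$, so $x_0\notin G_1$; as $\Omega$ is convex and $G_2=\partial\Omega\setminus G_1$ (up to replacing $G_2$ by its closure), the only possibilities are $x_0\in\Omega$ or $x_0\in G_2$. Throughout I would assume $v\in C^2$; the case $u\in C^2$ is completely symmetric, interchanging $u$ and $v$, reversing the inequalities, and using viscosity supersolutions in place of subsolutions.

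\emph{Interior case $x_0\in\Omega$.} Here $\varphi:=v+M\in C^2(\overline{\Omega})$ is convex and $u-\varphi=u-v-M\le 0$ with equality at $x_0$, so $u-\varphi$ has a local maximum at the interior point $x_0$, i.e. $\varphi$ touches $u$ from above there. By the first line of \eqref{comparison problem 2.9}, $u$ is a viscosity solution of $\operatorname{det}D^2u\ge g$ for some continuous $g$ with $g>\operatorname{det}D^2v$ on $\Omega$; testing this against $\varphi$ gives $\operatorname{det}D^2v(x_0)=\operatorname{det}D^2\varphi(x_0)\ge g(x_0)>\operatorname{det}D^2v(x_0)$, a contradiction. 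Alternatively one may reduce to Lemma \ref{comparison principle}: on the superlevel set $\{u-v>M-s\}$, which for small $s>0$ is compactly contained in $\Omega$ because its limit $\{u-v=M\}$ is, one has $u=v+(M-s)$ on the boundary and $\operatorname{det}D^2(v+(M-s))<\operatorname{det}D^2u$ inside, whence $u\le v+(M-s)$, contradicting $u>v+(M-s)$ there.

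\emph{Boundary case $x_0\in G_2$.} Since $\beta$ is an inward field at $x_0$ and $\Omega$ is convex, $x_0+t\beta\in\Omega$ for all small $t>0$, and since $x_0$ is a global maximum of $u-v$ we get $(u-v)(x_0+t\beta)\le(u-v)(x_0)$. Dividing by $t$ and letting $t\to 0^+$ (the right directional derivative $D_\beta^+u(x_0)$ of the convex function $u$ exists, and $v\in C^2$) yields $D_\beta^+u(x_0)\le D_\beta v(x_0)$, where by the support‑function identity for convex functions $D_\beta^+u(x_0)=\max_{p\in\partial u(x_0)}p\cdot\beta$. This contradicts the Neumann inequality in the third line of \eqref{comparison problem 2.9}, which at $x_0$ asserts exactly $D_\beta v(x_0)<\max_{p\in\partial u(x_0)}p\cdot\beta$. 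Hence no maximum of $u-v$ of value $\ge 0$ can be attained anywhere on $\overline{\Omega}$, i.e. $u<v$ on $\overline{\Omega}$.

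I expect the boundary case to be the delicate point: one must match the weak meaning of the Neumann condition (stated ``in the viscosity sense'') with the one‑sided derivative obtained from the maximum principle, and one must check that ``$\beta$ inward'' genuinely forces $x_0+t\beta\in\Omega$ at points where $\partial\Omega$ fails to be $C^1$ (corners, where $G_1$ and $G_2$ meet); convexity of $\Omega$ together with $\beta$ lying in the interior of the tangent cone makes this harmless but it should be stated with care. The interior case is routine once Lemma \ref{comparison principle} is in hand, via the standard level‑set reduction from the strict to the non‑strict Monge--Amp\`ere comparison, and the reduction of the ``$u\in C^2$'' case to the ``$v\in C^2$'' case is purely formal.
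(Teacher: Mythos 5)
Your proposal is correct and follows essentially the same route as the paper: assume the maximum of $u-v$ is nonnegative, exclude interior points via the Monge--Amp\`ere inequality and $G_1$ via $u<v$, and derive a contradiction on $G_2$ from the Neumann inequality. The only cosmetic difference is that you extract the contradiction on $G_2$ from the one-sided directional derivative of the convex function $u$ along $\beta$, whereas the paper plugs the shifted affine perturbation of $v$ (resp.\ $u$) directly into the definition of viscosity sub/supersolution; these are the same computation, and your extra care about corner points of $\overline{G_1}\cap\overline{G_2}$ is a point the paper glosses over.
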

\begin{proof}
	On the contrary to the conclusion, we suppose that $w=u-v$ archives  positive maximum value $m$ at a point   $x_0 \in \overline{\Omega}$. According to the assumption and
  the first two inequalities in \eqref{comparison problem 2.9} we can take   $x_0\in \partial \Omega_2$  such that $u(x_0)-v(x_0)=m$.
  This means that  $u-[v(x_0)+\nabla v(x_0)\cdot (x-x_0)+m]$ archives
the maximum  at $x_0$ when
  $v$ is $C^2$ smooth,  or  $v-[u(x_0)+\nabla u(x_0)\cdot (x-x_0)-m]$
 archives  the  minimum  at $x_0$ when
  $u$ is $C^2$ smooth,   which contradicts  the third inequality in \eqref{u=0 nabla u =0} by the definition of viscosity solution.
\end{proof}

\begin{Lemma}\label{volume bound by measure}
  If $\Omega$ is a bounded convex domain in $\R^n$ and $u$ is a convex function satisfying
\[ \operatorname{det} D^2u > \lambda \text{ in }  \Omega , \]
then
\begin{equation} \label {2.11}
\operatorname{Vol}(\Omega) \leq C(\lambda)|| u||_{L^{\infty}}^{\frac{n}{2}} .
\end{equation}
 In particular,
\begin{equation} \label {2.12}
\operatorname{Vol}(S_h) \leq Ch^{\frac{n}{2}}.
\end{equation}
\end{Lemma}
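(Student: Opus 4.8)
The plan is to prove \eqref{2.11} and then obtain \eqref{2.12} by applying it to a section. We may assume $M:=\|u\|_{L^\infty(\Omega)}<\infty$, since otherwise there is nothing to prove. The idea for \eqref{2.11} is to normalize $\Omega$ by John's Lemma and then play the lower bound on the Monge--Amp\`ere measure against the elementary gradient estimate for convex functions; this route avoids any assumption on the (unknown) boundary values of $u$.

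First I would apply Lemma \ref{Jhon Lemma} to $\Omega$, obtaining an affine map $T(x)=Ax+b$ with $B_{c_0}\subset T(\Omega)\subset B_{C_0}$, where $c_0=c(n)$, $C_0=C(n)$; then $\operatorname{Vol}(\Omega)=|\det A|^{-1}\operatorname{Vol}(T\Omega)$, so $\operatorname{Vol}(\Omega)\asymp_n|\det A|^{-1}$. Set $\tilde u:=u\circ T^{-1}$ on $T(\Omega)$. Adding a constant changes neither the oscillation nor the Monge--Amp\`ere measure, and under the affine change of variable one has $\mu_{\tilde u}(E)=|\det A|^{-1}\mu_u(T^{-1}E)$; hence $\det D^2\tilde u\ge\lambda|\det A|^{-2}$ in $T(\Omega)$ in the sense of measures, while $\operatorname{osc}_{T(\Omega)}\tilde u=\operatorname{osc}_\Omega u\le 2M$. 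For each $y\in B_{c_0/2}$ and each $p\in\partial\tilde u(y)$, evaluating the supporting-plane inequality of the convex function $\tilde u$ at the point $y+\tfrac{c_0}{2}\,p/|p|\in B_{c_0}\subset T(\Omega)$ gives $\tfrac{c_0}{2}|p|\le\operatorname{osc}_{T(\Omega)}\tilde u\le 2M$, so $\partial\tilde u\big(B_{c_0/2}\big)$ lies in a ball of radius $4M/c_0$.

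Comparing the two resulting estimates for the Monge--Amp\`ere measure of $B_{c_0/2}$,
\[
 c(n)\,\lambda\,|\det A|^{-2}\ \le\ \mu_{\tilde u}\big(B_{c_0/2}\big)\ =\ \big|\partial\tilde u\big(B_{c_0/2}\big)\big|\ \le\ C(n)\,M^{\,n},
\]
we get $|\det A|^{-1}\le C(n)\,\lambda^{-1/2}M^{\,n/2}$ and therefore $\operatorname{Vol}(\Omega)\le C(n)\,\lambda^{-1/2}M^{\,n/2}$, which is \eqref{2.11}. To deduce \eqref{2.12} I would apply \eqref{2.11} to the bounded open convex set $S_h\cap\R_+^n$ and to $v(x):=u(x)-u(0)-\nabla u(0)\cdot x$: since $u(0)+\nabla u(0)\cdot x$ is a supporting plane of $u$ at $0$, one has $0\le v<h$ on $S_h$, so $\|v\|_{L^\infty(S_h\cap\R_+^n)}\le h$, while $\det D^2v=\det D^2u=1$ in $\R_+^n$; thus $\operatorname{Vol}(S_h\cap\R_+^n)\le C h^{\,n/2}$, and since $S_h\setminus\R_+^n\subset\partial\R_+^n$ has zero volume this gives $\operatorname{Vol}(S_h)\le Ch^{\,n/2}$.

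This is a classical estimate, so there is no serious obstacle; the only points needing care are bookkeeping ones: that the concentric ball $B_{c_0/2}$ stays inside $T(\Omega)$, where the scaled lower bound $\det D^2\tilde u\ge\lambda|\det A|^{-2}$ is available; the affine scaling law for the Monge--Amp\`ere measure; and the harmless replacement of $S_h$ by the open set $S_h\cap\R_+^n$ of the same volume, on which $\det D^2u$ is the genuine constant function $1$.
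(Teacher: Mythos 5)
Your proof is correct. The paper itself gives no argument here --- it simply cites Corollary 3.2.5 of Guti\'errez's book, whose proof of the section estimate \eqref{2.12} runs through normalization of the section followed by the comparison principle (or the Aleksandrov estimate) applied to the normalized function, and this crucially uses the known boundary values $u=\ell+h$ on $\partial S_h$. Your route is genuinely different and in fact better suited to the general statement \eqref{2.11}, where no boundary information on $u$ is available: after the John normalization you bound the Monge--Amp\`ere measure of the concentric ball $B_{c_0/2}$ from below by the hypothesis (via the correct scaling law $\mu_{\tilde u}(E)=|\det A|^{-1}\mu_u(T^{-1}E)$) and from above by the elementary gradient estimate $|\partial\tilde u(B_{c_0/2})|\le C(n)(M/c_0)^n$, which needs only the oscillation bound. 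This avoids the comparison principle entirely, yields the explicit dependence $C(\lambda)=C(n)\lambda^{-1/2}$, and then \eqref{2.12} follows exactly as you say by applying \eqref{2.11} to $v=u-u(0)-\nabla u(0)\cdot x$ on $S_h\cap\R_+^n$. The only bookkeeping point you leave implicit is that $S_h$ is bounded; if one does not know this a priori, your uniform bound \eqref{2.11} applied to an exhaustion of $S_h\cap\R_+^n$ by bounded convex subdomains gives the same conclusion, so nothing is lost.
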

\begin{proof}
\eqref{2.12} was proved in Corollary 3.2.5 in \cite{[G]}, and \eqref{2.11} can be proved as its proof.
\end{proof}

\begin{Lemma} \label{c1/n boundary estimate}
	 Suppose that $\Omega$ is a  convex domain  in $\R^n$  such that $B_{c}(0)\subset \Om  \subset B_{C}(0)$ and $u\in C(\overline{\Omega})$ is a convex function.
If
	\[ \lambda \leq \operatorname{det} D^2u \leq  \Lambda \text{ in }  \Omega, \ u=0 \text{ on } \partial \Omega ,\]
	then
	\begin{equation*}
		 u(x) \geq -C\operatorname{dist}(x,\partial \Omega)^{\frac{1}{n}}.
	\end{equation*}
\end{Lemma}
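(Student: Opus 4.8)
The plan is to deduce this immediately from Aleksandrov's maximum principle, which is exactly the sharp quantitative boundary estimate for convex functions vanishing on the boundary; the only work is to feed it the right quantities. First I would record the two elementary consequences of the hypotheses that we need. Since $u$ is convex and $u=0$ on $\pom$, every $x\in\Om$ is a convex combination of points of $\pom$ (Carath\'eodory), so $u(x)\le 0$ on $\bom$; hence it suffices to bound $|u(x)|=-u(x)$ from above. Also $\operatorname{diam}\Om\le 2C$ because $\Om\subset B_C(0)$, and the total Monge-Amp\`ere mass is universally bounded:
\[
\int_{\Om}\operatorname{det}D^2u\,dx\le\Lambda\operatorname{Vol}(\Om)\le\Lambda\operatorname{Vol}(B_C(0)).
\]

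Then I would invoke Aleksandrov's maximum principle (see \cite{[F],[G]}): for a convex $u\in C(\bom)$ with $u|_{\pom}=0$ one has, for every $x\in\Om$,
\[
|u(x)|^{n}\le C_n\,(\operatorname{diam}\Om)^{n-1}\,\operatorname{dist}(x,\pom)\int_{\Om}\operatorname{det}D^2u\,dx
\le C_n\,(2C)^{n-1}\Lambda\operatorname{Vol}(B_C(0))\,\operatorname{dist}(x,\pom).
\]
Taking $n$-th roots and using $u\le 0$ gives $u(x)\ge -C\operatorname{dist}(x,\pom)^{1/n}$ with $C$ universal; for $x\in\pom$ both sides vanish. (I note in passing that neither the lower bound $\operatorname{det}D^2u\ge\lambda$ nor the inclusion $B_c(0)\subset\Om$ is actually used here; they are presumably stated for uniformity with the rest of the paper.)

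There is essentially no obstacle beyond quoting the right theorem. If one insisted on a self-contained argument, the real content would be reproving Aleksandrov's estimate: compare the graph of $u$ with the convex cone $v$ with vertex $(x,u(x))$ and base $\pom\times\{0\}$ — along every segment from $x$ to $\pom$ convexity of $u$ forces $u\le v$, hence $v\ge u$ on $\Om$ and $\int_\Om\operatorname{det}D^2v\le\int_\Om\operatorname{det}D^2u$ — and then bound the Monge-Amp\`ere mass of $v$ concentrated at its vertex from below by $c_n|u(x)|^n/[(\operatorname{diam}\Om)^{n-1}\operatorname{dist}(x,\pom)]$ by computing $\operatorname{Vol}(\partial v(x))$. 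I would \emph{not} try to extract the sharp $\operatorname{dist}^{1/n}$ rate from a barrier placed inside $\Om$ via the comparison principle (Lemma \ref{comparison principle}): forcing $\operatorname{det}D^2w\ge\Lambda$ requires transversal curvature that already makes such a $w$ comparable to a negative constant in the interior, so that route only delivers the weaker bound $u\ge -C$.
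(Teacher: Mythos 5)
Your proposal is correct and follows exactly the paper's route: the paper proves this lemma by simply citing Aleksandrov's maximum principle (Theorem 1.4.2 in \cite{[G]}), and your application of that estimate — bounding the Monge--Amp\`ere mass by $\Lambda\operatorname{Vol}(B_C(0))$ and the diameter by $2C$, then taking $n$-th roots — is the intended argument. Your side remarks (that $\lambda$ and $B_c(0)\subset\Om$ are not needed, and the sketch of the cone comparison behind Aleksandrov's estimate) are accurate but not required.
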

\begin{proof}
 This is the Aleksandrove's maximum principle. See Theorem 1.4.2   in \cite{[G]}.
\end{proof}

\begin{Lemma} \label{c2a classical estimate}
	Suppose that  $B_{c}(0)\subset \Om  \subset B_{C}(0)$ and $u$ is a convex function defined on convex domain $\bar \Omega$ such that
	\[ \lambda \leq \operatorname{det} D^2u =f(x)\leq  \Lambda \text{ in }  \Omega, \ u=0 \text{ on } \partial \Omega. \]
	If $||f||_{C^{k,\alpha}}(\Omega)\leq C$ for some $k \geq 0$ and $ \alpha\in (0,1)$, then
	\begin{equation*}
		||u||_{C^{k+2,\alpha}( \frac{1}{2}\Omega)} \leq C.
	\end{equation*}
\end{Lemma}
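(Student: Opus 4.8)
\medskip
\noindent\textbf{Proof plan.} The plan is to run the classical Caffarelli regularity chain for the Monge--Amp\`ere equation, exploiting the hypothesis $B_c(0)\subset\Om\subset B_C(0)$ as a built-in normalization and then bootstrapping from a $C^{2,\alpha}$ estimate by differentiating the equation; ``universal'' below means depending only on $n,\lambda,\Lambda$, the constants in $B_c\subset\Om\subset B_C$, and $\|f\|_{C^{k,\alpha}(\Om)}$. First I would pin down $\|u\|_{L^\infty(\Om)}\le C$: since a convex function on a bounded convex set attains its maximum on the boundary and $u=0$ on $\partial\Om$ we get $u\le 0$, and comparing $u$ from below with the paraboloid $w(x)=\tfrac{\Lambda^{1/n}}{2}(|x|^2-C^2)$ --- which satisfies $\det D^2 u\le\Lambda=\det D^2w$ in $\Om$ and $w\le 0=u$ on $\partial\Om$ --- via Lemma \ref{comparison principle} yields $u\ge w\ge -C$. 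Also, $B_{c/2}(x)\subset\Om$ for every $x\in\tfrac12\Om$, so all the estimates below are genuinely interior and uniform over $\tfrac12\Om$.

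The heart of the proof is the interior $C^{2,\alpha}$ estimate. Since $\lambda\le\det D^2u\le\Lambda$ and $f$ is continuous (being $C^{k,\alpha}$), Caffarelli's strict convexity / localization theorem \cite{[C1]} shows $u$ is strictly convex in $\Om$; hence the sections $S^u_h(x_0)$ at points $x_0\in\tfrac12\Om$ are compactly contained in $\Om$ for all $h$ below a uniform $h_0>0$, and I would normalize such a section by an affine map as in John's Lemma \ref{Jhon Lemma} (choosing the determinant so that $\det D^2$ is preserved up to a universal factor), reducing to a convex solution on a domain comparable to the unit ball, vanishing on its boundary, with $\lambda\le\det D^2\le\Lambda$ and right-hand side in $C^\alpha$. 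On such a normalized section one applies Caffarelli's interior $W^{2,p}$ estimates \cite{[C3]} and the interior $C^{2,\alpha}$ estimate (equivalently Evans--Krylov for the concave operator $\log\det$), which together with $\det D^2u\ge\lambda$ give $\|u\|_{C^{2,\alpha}}\le C$ and a two-sided bound $\mu_0\I\le D^2u\le C\I$; undoing the affine maps and covering $\tfrac12\Om$ by finitely many sections yields $\|u\|_{C^{2,\alpha}(\Om')}\le C$ and $\mu_0\I\le D^2u\le C\I$ on a fixed $\Om'$ with $\tfrac12\Om\Subset\Om'\Subset\Om$, for universal $0<\mu_0\le C$.

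It then remains to bootstrap. If $k=0$ there is nothing more to do. For $k\ge1$ I would induct on $k$ over nested subdomains of $\Om'$: the inductive hypothesis (or, for $k=1$, the previous paragraph) gives $u\in C^{k+1,\alpha}$ with universal bounds, and differentiating $\log\det D^2u=\log f$ in a coordinate direction $e$ --- rigorously via difference quotients at the first step --- shows that $w:=D_eu$ solves the linear uniformly elliptic equation
\[
 u^{ij}\,D_{ij}w \;=\; D_e\log f, \qquad (u^{ij}):=(D^2u)^{-1},
\]
whose coefficients lie in $C^{k-1,\alpha}$ with universal norm (inversion is smooth on $[\mu_0\I,C\I]$ and $D^2u\in C^{k-1,\alpha}$) and whose right-hand side lies in $C^{k-1,\alpha}$ because $f\ge\lambda>0$. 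Interior Schauder estimates for linear uniformly elliptic equations then give $w\in C^{k+1,\alpha}$, i.e.\ $u\in C^{k+2,\alpha}$, with the asserted universal bound; shrinking the domain at each of the finitely many steps and relabelling gives the estimate on $\tfrac12\Om$.

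The one genuinely hard input is the $C^{2,\alpha}$ estimate of the second paragraph: it rests on the full strength of Caffarelli's theory (strict convexity / localization, $W^{2,p}$, and the Evans--Krylov type $C^{2,\alpha}$ estimate) and, more delicately, on checking that every constant produced is universal --- for which the built-in assumption $B_c\subset\Om\subset B_C$ together with the $L^\infty$ bound on $u$ is precisely what one needs. Once that and the two-sided Hessian bound are in hand, the Schauder bootstrap is routine.
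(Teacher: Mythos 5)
Your proposal is correct and follows the same route as the paper, whose proof is simply a citation: the $k=0$ case is Caffarelli's interior $C^{2,\alpha}$ estimate from \cite{[C2]} (normalized by the hypothesis $B_c(0)\subset\Om\subset B_C(0)$), and the $k\geq 1$ case is the standard Schauder bootstrap for the linearized equation. You have merely written out in full the details that the paper delegates to the references.
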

\begin{proof}
  When $k=0$, this is the famous interior $C^{2, \alpha}$ estimate in \cite{[C2]}. Also see \cite{[JW1]}.
  Then  the case of $k\geq 1$ follows from the standard regularity result for linear elliptic equations.
\end{proof}

\begin{Lemma} \label{strict classical line lemma}
 Let $u\in C(B_1(0) )$  be a convex   solution to
\[0 < \lambda \leq \operatorname{det}  D^2u \leq \Lambda .  \]
For $x\in B_1(0)$ and $p\in \partial u(x)$,  define
$  \Sigma:=\{y\in B_1(0)|\,u(y)=u(x)+p\cdot (y-x)\}.$
Then, either $\Sigma=\{x\}$ or $\Sigma $ has no extremal point inside $B_1(0)$.
\end{Lemma}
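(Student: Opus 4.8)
The plan is to argue by contradiction, following Caffarelli's localization technique. Suppose $\Sigma\neq\{x\}$ and $\Sigma$ has an extremal point in $B_1(0)$. Replacing $u$ by $u(\cdot)-u(x)-p\cdot(\cdot-x)$ (which leaves $\operatorname{det}D^2u$ unchanged) we may assume $u\ge0$ and $\Sigma=\{u=0\}$; after a translation the extremal point is $0$. Since $\Sigma\neq\{0\}$ and, by Straszewicz's theorem, the exposed points of a compact convex set are dense among its extreme points, we may assume $0$ is an \emph{exposed} point, and after a rotation that it is exposed by $\{x_n=0\}$: $\Sigma\subset\{x_n\le0\}$ and $\Sigma\cap\{x_n=0\}=\{0\}$. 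Pick $x_*\in\Sigma$ with $(x_*)_n<0$ (possible since $\Sigma\neq\{0\}$).

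The core of the argument is to produce compactly contained sections that nonetheless retain a fixed slice of $\Sigma$. Fix $\rho>0$ with $\overline{B_\rho(0)}\subset B_1(0)$; by continuity and exposedness there are $m,\delta>0$ with $u\ge m$ on $\partial B_\rho(0)\cap\{x_n\ge-\delta\}$ and $\Sigma\cap\partial B_\rho(0)\subset\{x_n<-\delta\}$, and, after replacing $x_*$ by a point of $[0,x_*]\subset\Sigma$, we may take $|x_*|<\rho$ and $(x_*)_n\in(-\delta,0)$. For small $\epsilon>0$ put
\[
S_\epsilon:=\Bigl\{x\in B_\rho(0):\ u(x)<\epsilon\bigl(\tfrac1\delta x_n+1\bigr)\Bigr\}.
\]
The two estimates above give $\overline{S_\epsilon}\subset B_\rho(0)\subset\subset B_1(0)$; also $0\in S_\epsilon$, $S_\epsilon\supset[0,x_*]$ (on which $u\equiv0$), and $\operatorname{Vol}(S_\epsilon)\to0$ since $\bigcap_\epsilon S_\epsilon=\{x\in\Sigma:x_n>-\delta\}$ is at most $(n-1)$–dimensional ($\dim\Sigma=n$ would force $\operatorname{det}D^2u\equiv0$ on an open set). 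Replacing the tilted plane $\epsilon(\tfrac1\delta x_n+1)$ by the parallel supporting plane $L_\epsilon$ of $u$ exhibits $S_\epsilon$ as a genuine section of $u$ of height $h_\epsilon=\epsilon-\mu_\epsilon$, where $\mu_\epsilon=\min_{\overline{B_\rho(0)}}(u-\tfrac\epsilon\delta x_n)\le0$; by Lemma~\ref{volume bound by measure} and the standard companion lower bound $\operatorname{Vol}(S_\epsilon)\gtrsim h_\epsilon^{n/2}$ (a consequence of $\operatorname{det}D^2u\le\Lambda$), one has $\operatorname{Vol}(S_\epsilon)\sim h_\epsilon^{n/2}$. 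Now normalize $S_\epsilon$ by an affine map $T_\epsilon$ via Lemma~\ref{Jhon Lemma} and rescale $u$ in the Monge–Amp\`ere way: one obtains $\tilde u_\epsilon$ on domains trapped between two fixed balls, with $\lambda'\le\operatorname{det}D^2\tilde u_\epsilon\le\Lambda'$ universal, $\tilde u_\epsilon=0$ at the bottom and $\tilde u_\epsilon=1$ on the boundary. Because $0$ is the exposed tip of $\Sigma$ in the direction $e_n$ while $S_\epsilon$ has width in that direction bounded below (it contains $[0,x_*]$, of $x_n$–extent $|(x_*)_n|>0$) and protrudes past $0$ in the $+e_n$ direction only by a vanishing amount (no $te_n$, $t>0$, lies in $\Sigma$), the point $0$ sits at a vanishing fraction of the $e_n$–width of $S_\epsilon$ from $\partial S_\epsilon$; hence $\operatorname{dist}\bigl(T_\epsilon(0),\partial T_\epsilon(S_\epsilon)\bigr)\to0$.

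Finally, the contradiction: the Aleksandrov maximum principle (Lemma~\ref{c1/n boundary estimate}) applied to $1-\tilde u_\epsilon$ gives $\tilde u_\epsilon\bigl(T_\epsilon(0)\bigr)\ge1-C\operatorname{dist}(T_\epsilon(0),\partial T_\epsilon S_\epsilon)^{1/n}\to1$. On the other hand $L_\epsilon$ is attained at an interior point $z_\epsilon$ with $\tfrac\epsilon\delta e_n\in\partial u(z_\epsilon)$; the convexity inequality $0=u(0)\ge u(z_\epsilon)-\tfrac\epsilon\delta(z_\epsilon)_n$, together with $u(z_\epsilon)\to0$ (clear from $\mu_\epsilon\to0$), forces $z_\epsilon\to0$ and $|\mu_\epsilon|\le\tfrac\epsilon\delta|z_\epsilon|=o(\epsilon)$, so $h_\epsilon\sim\epsilon$ and $\tilde u_\epsilon(T_\epsilon(0))=\dfrac{u(0)-L_\epsilon(0)}{h_\epsilon}=\dfrac{|\mu_\epsilon|}{h_\epsilon}=o(1)$. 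These two conclusions are incompatible as $\epsilon\to0$, so $\Sigma$ has no extremal point in $B_1(0)$, and therefore either $\Sigma=\{x\}$ or $\Sigma$ has no extremal point inside $B_1(0)$. The step I expect to be the genuine obstacle is the middle paragraph: arranging $S_\epsilon$ to be simultaneously compactly contained and to carry a non–shrinking part of $\Sigma$, and then verifying that John's Lemma together with the volume/measure estimates for $\operatorname{det}D^2u$ keep every constant universal, so that the Aleksandrov comparison in the last step is legitimate; the reduction to an exposed point and the local description of $\Sigma$ near $0$ are supporting technicalities.
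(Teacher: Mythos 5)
The paper does not actually prove this lemma; it simply cites Caffarelli's localization paper \cite{[C1]}. What you have written is a self-contained rendition of exactly that argument (exposed-point reduction, tilted sections $S_\epsilon$ compactly contained in $B_\rho$, John normalization, and the contradiction between the Aleksandrov lower bound and the fact that $T_\epsilon(0)$ drifts to the boundary of the normalized section), and I believe it is correct. Two steps are compressed enough that you should spell them out. First, the claim $\operatorname{dist}(T_\epsilon(0),\partial T_\epsilon(S_\epsilon))\to 0$: the quantity that must vanish is $M_\epsilon:=\sup_{S_\epsilon}x_n$, not merely $\sup\{t:te_n\in S_\epsilon\}$ (controlling only the ray $te_n$ is not enough, since $|A_\epsilon e_n|$ for the linear part $A_\epsilon$ of $T_\epsilon$ need not be bounded). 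One gets $M_\epsilon\to 0$ by compactness from $\bigcap_\epsilon\overline{S_\epsilon}\subset\Sigma\subset\{x_n\le 0\}$, and then the correct mechanism is the slab argument: $S_\epsilon$ lies between the parallel hyperplanes $\{x_n=m_\epsilon\}$ and $\{x_n=M_\epsilon\}$ with $M_\epsilon-m_\epsilon\ge|(x_*)_n|$, the hyperplane $\{x_n=0\}$ through $0$ splits this slab in the ratio $M_\epsilon:|m_\epsilon|\to 0$, affine maps preserve ratios of parallel segments, and the image slab has width at most $2C(n)$; since $T_\epsilon(S_\epsilon)$ lies on one side of $T_\epsilon(\{x_n=M_\epsilon\})$, the distance from $T_\epsilon(0)$ to $\partial T_\epsilon(S_\epsilon)$ is at most its distance to that hyperplane, which is $O(M_\epsilon)\to 0$. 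Second, your assertion that $u(z_\epsilon)\to 0$ ``forces $z_\epsilon\to 0$'' needs the extra observation that $\mu_\epsilon\le 0$ together with $u\ge 0$ gives $(z_\epsilon)_n\ge 0$, so limit points of $z_\epsilon$ lie in $\Sigma\cap\{x_n\ge 0\}=\{0\}$ by exposedness. In fact you can bypass $z_\epsilon$ entirely: $u\ge 0$ gives directly $|\mu_\epsilon|\le(\rho/\delta)\epsilon$, hence $\tilde u_\epsilon(T_\epsilon(0))=|\mu_\epsilon|/(\epsilon+|\mu_\epsilon|)\le(\rho/\delta)/(1+\rho/\delta)<1$, which already contradicts $\tilde u_\epsilon(T_\epsilon(0))\to 1$. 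With these points made explicit the proof is complete and is, as expected, the proof of the cited reference rather than an alternative to it.
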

\begin{proof}
   See \cite{[C1]}.
\end{proof}

The following result is already well-known. See\cite{[Jo], [Ca], [P1], [P2], [CY]} or the books\cite{[F],[G]}.

\begin{Theorem} \label{global classical results}
	Let $u \in C({\R^n})$ be a continuous, convex   solution. If  $\operatorname{det} D^2 u =1$ in $\R^n$, then $u$ is a quadratic polynomial.
\end{Theorem}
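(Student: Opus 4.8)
The plan is to exploit the affine invariance of $\det D^2u=1$ together with Pogorelov's interior second–derivative estimate and a concluding scaling argument; for $n=2$ one could instead invoke J\"orgens' classical complex–analytic argument, but the scheme below works in all dimensions.

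\emph{Step 1 (normalisation and regularity).} First I would show that an entire solution is strictly convex on $\R^n$: if some supporting plane touched the graph of $u$ along a set $\Sigma$ with more than one point, then by Lemma \ref{strict classical line lemma} $\Sigma$ has no extremal point in $\R^n$, hence contains a full line $\ell$; restricting $u$ to a $2$–plane spanned by $\ell$ and a transversal direction and using $\det D^2u=1$ then contradicts the positivity and local finiteness of the Monge--Amp\`ere mass. Strict convexity plus the classical interior theory give $u\in C^\infty(\R^n)$, and the same line of argument shows that every section $S_h:=\{x:\,u(x)<u(0)+\nabla u(0)\cdot x+h\}$ is bounded. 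After subtracting an affine function I may assume $u(0)=0$, $\nabla u(0)=0$ and $u\ge 0$; since $\det D^2u(0)=1$, composing $u$ with the volume–preserving linear map $x\mapsto (D^2u(0))^{-1/2}x$ lets me further arrange $D^2u(0)=I$ while keeping $\det D^2u\equiv 1$. I would also use the standard fact that $S_h$ is balanced about $0$ up to a universal constant.

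\emph{Step 2 (renormalised sections and the Pogorelov estimate).} Apply John's Lemma (Lemma \ref{Jhon Lemma}) to produce an affine map $A_h$ with $B_c\subset A_hS_h\subset B_C$, and set $\lambda_h:=\det A_h$. Since $\operatorname{Vol}(S_h)=\lambda_h^{-1}\operatorname{Vol}(A_hS_h)\approx\lambda_h^{-1}$, the bound $\operatorname{Vol}(S_h)\le Ch^{n/2}$ of Lemma \ref{volume bound by measure} together with the matching lower bound $\operatorname{Vol}(S_h)\ge ch^{n/2}$ (classical, e.g. by comparing $u-h$ with paraboloids via Lemma \ref{comparison principle}) give $\lambda_h\approx h^{-n/2}$. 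Hence $\tilde u_h(y):=\lambda_h^{2/n}\bigl(u(A_h^{-1}y)-h\bigr)$ satisfies $\det D^2\tilde u_h=1$ in $A_hS_h$, $\tilde u_h=0$ on $\partial(A_hS_h)$, and $\operatorname{osc}\tilde u_h=\lambda_h^{2/n}h\approx 1$. Pogorelov's interior $C^2$ estimate on the normalised convex domain $A_hS_h$ then bounds $|D^2\tilde u_h|$ by $C(n)$ near the minimum point $y_h:=A_h(0)$, which lies well inside $A_hS_h$ because $S_h$ is balanced about $0$; combined with $\det D^2\tilde u_h(y_h)=1$ this yields $c(n)I\le D^2\tilde u_h(y_h)\le C(n)I$. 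Reading this back through the identity $I=D^2u(0)=\lambda_h^{-2/n}A_h^{T}\,D^2\tilde u_h(y_h)\,A_h$ gives $c\,\lambda_h^{2/n}I\le A_hA_h^{T}\le C\,\lambda_h^{2/n}I$, so $A_h$ is uniformly isotropic and $B_{c\sqrt h}(0)\subset S_h\subset B_{C\sqrt h}(0)$. Pulling the uniform interior $C^{2,\alpha}$ bound for $\tilde u_h$ (Lemma \ref{c2a classical estimate}) back to $u$ then gives $c(n)I\le D^2u\le C(n)I$ on $\tfrac12 S_h\supset B_{c'\sqrt h}(0)$, and letting $h\to\infty$ we obtain $c(n)I\le D^2u\le C(n)I$ on all of $\R^n$.

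\emph{Step 3 (conclusion, and the main difficulty).} Now $u$ solves the uniformly elliptic equation $\det D^2u=1$ globally, so for every $R>0$ the rescaling $u_R(x):=R^{-2}u(Rx)$ does too, with the same ellipticity constants; applying the interior $C^{2,\alpha}$ estimate (Lemma \ref{c2a classical estimate}, on a normalised section of $u_R$) gives $[D^2u_R]_{C^\alpha(B_1)}\le C(n)$, hence $[D^2u]_{C^\alpha(B_R)}=R^{-\alpha}[D^2u_R]_{C^\alpha(B_1)}\le C(n)R^{-\alpha}\to 0$; therefore $D^2u$ is constant and $u$ is a quadratic polynomial. The main obstacle is Step 2: having a sharp enough Pogorelov–type interior second–derivative estimate on the renormalised sections, since that is exactly what rigidifies the section shapes into balls once $D^2u(0)=I$ is normalised, thereby upgrading ``entire solution'' to ``globally uniformly elliptic solution''. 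The strict-convexity/regularity of $u$ and the volume and balancing estimates for $S_h$ used in Steps 1--2 are the other technical inputs, though all of them are classical.
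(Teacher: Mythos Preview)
The paper does not give its own proof of this statement; it is quoted as the classical J\"orgens--Calabi--Pogorelov theorem with references to \cite{[Jo], [Ca], [P1], [P2], [CY]} and the textbooks \cite{[F], [G]}. Your outline is essentially the standard modern argument (due in this form to Caffarelli, and presented for instance in the cited books): normalise the sections $S_h$ via John's lemma, apply the interior Pogorelov/$C^{2,\alpha}$ estimate on the normalised domain to bound $D^2u$ at the minimum point, deduce that the sections are comparable to round balls so that $D^2u$ is globally bounded above and below, and finish with a scaling argument that kills the H\"older seminorm of $D^2u$. So your approach matches what the paper is implicitly citing.

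Two phrasings in Step~1 are worth tightening, though neither is a genuine gap. First, the contradiction coming from a line $\ell$ in the contact set is not obtained by ``restricting $u$ to a $2$--plane'' (the Monge--Amp\`ere measure of a restriction is not controlled); rather, $u$ minus its supporting plane is bounded on an infinite tube around $\ell$, and Lemma~\ref{volume bound by measure} then gives the contradiction --- exactly as in the paper's Lemma~\ref{strict convex in interior}. Second, the assertion that ``$S_h$ is balanced about $0$'' is slightly stronger than what you actually use; the relevant (and standard) fact is that after John normalisation the minimum point lies a definite distance from the boundary, which follows from Lemma~\ref{c1/n boundary estimate} combined with the lower bound on $|\min u|$ on the normalised section.
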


\begin{Remark}\label{reflect classical results}
	   The case $a=0$ of Theorem \ref{liouville theorem}  can be obtained directly from Theorem \ref{global classical results}. In fact,  Lemma \ref{viscosity neumann equiv} in next section allow us to reflect $u$, the solution to problem \eqref{liouville problem},
with respect to $\partial \R_+^{n}$.
Then we  get a convex function $\bar{u}$.  Corollary \ref{en continues} also means that $D_n\bar{u}(\partial \R_+^n ) \subset \partial \R_+^{n}  . $
Thus $\bar{u}$ solves the equation
	\[	\operatorname{det} D^2 \bar{u} =1, \text{ in  } \; \R^{n}.\]
Hence $\bar u$ is a  quadratic polynomial by Theorem \ref{global classical results}, so is $u$.
\end{Remark}

\section{Interior strict convexity and normal derivative estimate  }

In this section, we first prove that a viscosity solution to problem \eqref{liouville problem}
is interior strict convexity, then give its normal derivative estimates.

\begin{Lemma} \label{strict convex in interior}
Suppose $u$ is a viscosity solution to problem \eqref{liouville problem}.
Then the graph of $u$ can't contains any ray, and $u$ is strictly convex away from the boundary $\partial \R_+^{n}$.
\end{Lemma}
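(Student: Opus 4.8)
The plan is to argue by contradiction. Suppose the graph of $u$ contains a ray, i.e., there exist $x_0\in\overline{\R_+^n}$ and a direction $\xi\in\R^n\setminus\{0\}$ and $p\in\partial u(x_0)$ such that $u(x_0+t\xi)=u(x_0)+tp\cdot\xi$ for all $t\ge 0$; equivalently $L(x):=u(x_0)+p\cdot(x-x_0)$ is a supporting affine function touching $u$ along the whole ray $\{x_0+t\xi:t\ge 0\}$. After subtracting $L$ we may assume $u\ge 0$, $u\equiv 0$ on this ray, and $0\in\partial u$ at every point of the ray. First I would dispose of the case where the ray is tangent to the boundary or points into the boundary — since $\R_+^n$ is a half-space, a ray starting in $\overline{\R_+^n}$ either stays in $\overline{\R_+^n}$ (so it is genuinely in the domain) or immediately exits, and the latter cannot happen for $t\ge 0$ unless $\xi$ has $\xi_n\ge 0$; so the ray lives in $\overline{\R_+^n}$.

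The core of the argument is the classical Caffarelli-type observation (this is essentially \cite{[C1]}, and is the content of Lemma \ref{strict classical line lemma}, stated above): for the interior equation $\det D^2u=1$ with $0<\lambda\le\det D^2 u\le\Lambda$, the contact set $\Sigma=\{u=L\}$, if it is not a single point, can have no extremal point in the interior. So if the ray is contained in $\R_+^n$ away from $\partial\R_+^n$, we can apply Lemma \ref{strict classical line lemma} on large balls $B_R(x_0+t\xi)$ for $t$ large: the contact set contains the ray, which has an endpoint, and for $t$ large that endpoint $x_0$ sits well inside $B_R$ for suitable $R$ — wait, more carefully: I would instead note that the contact set $\Sigma$, being closed and convex (it is the set where the convex function $u-L\ge 0$ vanishes), contains a ray, hence is unbounded; pick any point $y$ in the relative interior of $\Sigma$ that lies in $\R_+^n$ and a large ball $B_R(y)\subset\R_+^n$, and then $\Sigma\cap B_R(y)$ is a convex set of dimension $\ge 1$ whose extremal points (which exist, e.g. the two endpoints of the segment $\Sigma\cap B_R(y)$ along the ray direction) lie inside $B_R(y)$, contradicting Lemma \ref{strict classical line lemma}. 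This handles any ray whose relative interior meets $\R_+^n$.

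The remaining, and I expect genuinely delicate, case is when the contact set $\Sigma$ is contained in the boundary $\partial\R_+^n$, i.e., the ray lies in $\partial\R_+^n$ (so $\xi_n=0$ and $x_0\in\partial\R_+^n$) and $u$ is strictly positive in the interior near the ray. Here the interior lemma says nothing, and one must use the Neumann condition $D_nu(x',0)=ax_1$. The idea: translate so $x_0$ is the origin and the ray is $\{t\xi':t\ge 0\}$ with $\xi'\in\R^{n-1}$; then $u\equiv 0$ and $0\in\partial u$ along this ray. Along the ray the boundary condition forces $D_nu(t\xi',0)=a(t\xi')_1$, and combined with $0\in\partial u(t\xi',0)$ (so $D_nu\ge 0$ there by \eqref{viscosity sub equiv}-type reasoning, after the normalization) one gets a one-sided sign constraint on $\xi'_1$; but the stronger input is a barrier/comparison argument. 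I would build a subsolution (e.g., a suitable radial or paraboloid-type function, perhaps $c(|x-x^*|^2 - r^2) - C x_n$ adjusted so that $\det D^2 \ge 1$ and the normal derivative on $\partial\R_+^n$ is $\le ax_1$) sitting below $u$ on the boundary of a large half-ball $B_R^+$ but forced above $0$ at an interior point near the ray, using Lemma \ref{comparison principle} or the mixed comparison principle Lemma \ref{comparison principle for mixed problem}, to contradict $u\equiv 0$ on the ray — or, alternatively, exploit that the reflected function $\bar u(x',x_n):=u(x',|x_n|)$, legitimate here by Lemma \ref{viscosity neumann equiv} once $a$ is accounted for (the reflection is clean when $a=0$, and for $a\ne 0$ one subtracts the affine piece handling $ax_1$ — though note the boundary datum $ax_1$ is not even across the reflection, so one must be careful and perhaps only reflect after subtracting a function whose normal derivative matches $ax_1$), then apply the interior result to $\bar u$ near the (now interior) ray. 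The main obstacle is precisely making the reflection or the barrier argument work in the presence of the nontrivial Neumann datum $ax_1$; once the problem is reduced to the homogeneous Neumann ($a=0$) situation the reflection trick of Remark \ref{reflect classical results} combined with Lemma \ref{strict classical line lemma} finishes it, and the strict convexity away from $\partial\R_+^n$ then follows because any failure of strict convexity at an interior point would, by convexity, produce a segment in a level set, which by the same extremal-point argument of Lemma \ref{strict classical line lemma} must extend to the boundary and hence, being a bounded segment, have an interior extremal point — contradiction, unless it is a ray, which we have just excluded.
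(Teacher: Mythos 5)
Your reduction of the strict convexity statement to the no-ray claim is sound and matches the paper's, but the no-ray claim itself --- the heart of the lemma --- is not actually established in your proposal. Your interior case does not close: the extremal points of $\Sigma\cap B_R(y)$ that you invoke sit on $\partial B_R(y)$ (the contact set contains a ray, so it exits every ball), not inside it, so Lemma \ref{strict classical line lemma} yields no contradiction there; and that lemma is also silent when the ray emanates from a boundary point (its only candidate extremal point lies on $\partial \R_+^{n}$, outside every interior ball) and when $\Sigma$ is a full line (no extremal points at all). Your boundary case is explicitly left open: you sketch a barrier and a reflection, flag that both run into the inhomogeneous Neumann datum $ax_1$, and construct neither. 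So the proposal proves neither half of the ray exclusion.

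The paper disposes of all these cases at once with a measure-theoretic argument that uses neither Lemma \ref{strict classical line lemma} nor the boundary condition: if the graph of $u$ contains a ray $l$, subtract the supporting affine function at a point $y\in \P l$; the resulting nonnegative convex function $u^y$ vanishes along the ray and hence, by convexity, is bounded on the half-tube $E=(l'-y+B_1(y))\cap \overline{\R_+^n}$, a convex set of infinite volume on whose interior $\operatorname{det}D^2u^y=1$. Lemma \ref{volume bound by measure} then forces $\operatorname{Vol}(E)\leq C\|u^y\|_{L^{\infty}(E)}^{n/2}<\infty$, a contradiction that is insensitive to whether the ray lies in the interior, lies in $\partial\R_+^{n}$, or merely starts there. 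This volume bound is the key idea missing from your write-up; once the no-ray claim is in hand, your concluding argument for strict convexity (the contact set of an interior point has no extremal point in $\R_+^n$ by Lemma \ref{strict classical line lemma}, hence is either contained in $\partial\R_+^{n}$ --- impossible since it contains an interior point --- or unbounded, hence contains a ray) is essentially the paper's and is fine.
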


\begin{proof}
If the graph of $u$ contains a ray $l$ and take $y \in l'=\P l $, the function
$$u^y(x)= u(x) -u(y)- \nabla u(y) \cdot (y-x) $$
 is bounded in the tubular domain $E:=(l'-y+B_1(y)) \cap \overline{\R_+^n}$ and
\[ \operatorname{det}  D^2u^y =1 \text{ in } E, \]
which contradicts  Lemma \ref{volume bound by measure} since $E$ has infinite volume.

Suppose $y  \in \R_+^{n} $. Let
\begin{equation} \label{maximum section heigh}
	  h_y^u := \sup\{ h  |\, S_{t}^{u}(y) \subset \R_+^n,\ \forall t < h\}.
\end{equation}
Since $u$ is strictly convex in the interior of sections\cite{[C1]},
  it is enough for the strict convexity of $u$ away from the boundary $\partial  \R_+^n$ to show $h_y^u >0$ for $y \in \R_+^{n}$.

Otherwise, if $h_y=0$, the convex set
\[\Gamma:=\{ x\in  \R_+^n |\, u(x)= u(y) +\nabla u(y) \cdot (x-y)\}\]
should contain more than one point. Lemma \ref{strict classical line lemma} implies that the extremal point of $\Gamma$ is not in interior.
There are two cases: either $\Gamma$ contains a line $l$ parallel to $\R^{n-1}$, or $\Gamma$ contains a ray $l$ starting from point $z=(z',0)$ on $\partial \R_+^{n}$.
 Both cases contradicts  the previous fact that the graph of $u$ can't contains any ray.
\end{proof}

\begin{Lemma} \label{viscosity neumann equiv}
Suppose $u$ is a viscosity solution to problem \eqref{liouville problem} and $y \in \partial \R_+ ^{n}$. Then
$D_nu(y):=\overline{\lim_{t\to 0^+}}D_n u(y', t)$ is well-defined. Moreover,
 \begin{equation} \label{viscosity sup equiv}
 	 D_n u(y)=ay_1
 \end{equation}
and
\begin{equation} \label{en growth neumann}
u(y+te_n) \geq  u (y)+ tay_1, \ \  \forall t >0.
\end{equation}
\end{Lemma}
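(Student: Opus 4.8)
The plan is to establish three things: that the upper limit defining $D_nu(y)$ actually exists as a genuine limit, that its value is exactly $ay_1$, and that the pointwise convexity inequality \eqref{en growth neumann} holds. I would begin with the monotonicity that comes for free from convexity: for $0 < s < t$, the slope $\frac{u(y+te_n)-u(y)}{t}$ is nondecreasing in $t$, and more to the point $D_nu(y+se_n) \le D_nu(y+te_n)$ whenever these one-sided derivatives are taken as $\sup\{p\cdot e_n : p \in \partial u(\cdot)\}$, since the sub-differentials of a convex function are monotone along the ray. Hence $t \mapsto \sup\{p\cdot e_n : p \in \partial u(y+te_n)\}$ is monotone nondecreasing as $t \downarrow 0$, so the limit exists (possibly equal to $-\infty$ a priori, but boundedness below will follow). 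This disposes of "well-defined."

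For the identification $D_nu(y) = ay_1$, I would argue by the viscosity definition from both sides. The inequality $D_nu(y) \le ay_1$ is essentially \eqref{viscosity sub equiv}: by convexity, $\sup\{p\cdot e_n : p\in\partial u(y+te_n)\}$ is squeezed between difference quotients, and passing to the limit, combined with the fact that any support plane at $y+te_n$ as $t\to 0$ limits to a support plane at $y$, one gets $D_nu(y) \le \sup_{q\in\partial u(y)} q\cdot e_n \le ay_1$. The reverse, $D_nu(y) \ge ay_1$, is where the viscosity supersolution property enters. If $D_nu(y) < ay_1$, then the ray direction $e_n$ at $y$ leaves too much room, and one can slide a smooth convex test paraboloid $\varphi$ — something like $\varphi(x) = L(x) + \mu(x_n - \text{(small quadratic correction)})$ with slope slightly less than $ay_1$ in the $e_n$-direction — so that $u - \varphi$ has a local minimum at $y$ on $\overline{\R_+^n}$ while $D_n\varphi(y) < ay_1$, contradicting the supersolution condition at the boundary. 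One must be careful to build $\varphi$ so that it also stays below $u$ in the interior (using that $u$ has enough convexity / that the section is nondegenerate), and the two-sided limit established in the first step is exactly what lets us replace "$\varlimsup$" by a quantity that truly controls the boundary behavior of test functions. Finally, \eqref{en growth neumann} follows from \eqref{viscosity sup equiv}: since $D_nu(y+te_n)$ is monotone and its limit as $t\to 0^+$ equals $ay_1$, the slope $\frac{u(y+te_n)-u(y)}{t} \ge \lim_{s\to 0^+} D_nu(y+se_n) = ay_1$ for all $t>0$, giving $u(y+te_n) \ge u(y) + tay_1$.

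I expect the main obstacle to be the reverse inequality $D_nu(y) \ge ay_1$, i.e. constructing the test function that forces the supersolution inequality to bite at the boundary point $y$. The delicate point is that $u$ is only continuous up to the boundary and a priori could have a "corner" at $y$ whose sub-differential is large in the tangential directions; one needs a test paraboloid that touches from below at $y$, has controlled (slightly too small) normal derivative there, and genuinely stays below $u$ — which requires quantitative use of convexity near $y$, perhaps leveraging \eqref{section innitial assump} or an interior-strict-convexity consequence of Lemma \ref{strict convex in interior} to get a lower barrier in $\R_+^n$. A clean alternative is to first prove the estimate $\sup\{p\cdot e_n : p\in\partial u(y+te_n)\} \ge ay_1 - o(1)$ directly by comparing $u$ on the segment $[y+te_n, y+te_n + se_1]$ using the known boundary slope $ax_1$ and the convexity in the $e_1$-direction, thereby circumventing the need to construct a full $n$-dimensional test function; this should be the route, and it reduces the whole lemma to one-dimensional convexity bookkeeping along coordinate lines plus the boundary condition.
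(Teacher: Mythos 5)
Your steps for well-definedness (monotonicity of $t\mapsto\sup\{p\cdot e_n:p\in\partial u(y+te_n)\}$ from convexity), for the upper bound $D_nu(y)\le ay_1$ (limits of support planes land in $\partial u(y)$, then apply \eqref{viscosity sub equiv}), and for deducing \eqref{en growth neumann} by integrating the monotone derivative all match the paper and are fine. The problem is the step you yourself flag as the crux: the lower bound $D_nu(y)\ge ay_1$.

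There you invoke the wrong half of the viscosity definition, and the argument as written produces no contradiction. You propose a convex $\varphi$ such that $u-\varphi$ has a local \emph{minimum} at $y$ with $D_n\varphi(y)<ay_1$, and claim this contradicts the supersolution condition. But under the paper's definition, a local minimum of $u-\varphi$ at a boundary point only forces $D_n\varphi(y)\le ay_1$ — so $D_n\varphi(y)<ay_1$ is perfectly consistent, not contradictory. Moreover, no lower touching function can help here: if $D_nu(y)<ay_1$ and $\varphi$ touches $u$ from below at $y$, then necessarily $D_n\varphi(y)\le D_nu(y)<ay_1$, so the supersolution inequality can never ``bite.'' The correct mechanism is the \emph{subsolution} half: assuming $\lim_{t\to0^+}D_nu(y+te_n)<-2\epsilon$ (after normalizing $y=0$), the paper builds the convex barrier $v(x)=u(0)+\frac{2\operatorname{Osc}_{B_r^+}u}{r^2}|x'|^2-\epsilon x_n$ on a thin tube $\Gamma$ of height $t_0$, chooses $t_0,r$ so that $v>u$ on $\overline{\partial\Gamma\setminus\R^{n-1}}$ while $v(0)=u(0)$; a vertical shift of $v$ then touches $u$ from above at some $z$, which cannot be interior because $\det D^2v=0<1$, so $z\in\partial\R_+^n$, where the requirement $D_nv(z)\ge az_1$ for upper test functions fails since $D_nv(z)=-\epsilon<-|a|r\le az_1$. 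Your fallback ``Route B'' does not repair this: controlling $\sup\{p\cdot e_n\}$ by ``comparing $u$ on the segment $[y+te_n,y+te_n+se_1]$ using the known boundary slope $ax_1$'' presupposes that the Neumann condition already holds as a classical slope, which is exactly what the lemma is supposed to establish; tangential one-dimensional convexity bookkeeping cannot bound a normal derivative, and the boundary condition can only be accessed through a genuine $n$-dimensional touching function as above.
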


\begin{proof}
Without loss of generality we  may assume $y=0$. By Lemma \ref{strict convex in interior} and the convexity of $u$, $D_n(te_n)$ are well-definend and continuous for $t>0$. Hence,
$$D_nu(0):=\overline{\lim_{t\to 0^+}}D_n u( te_n)= \lim_{t\to 0^+}D_n u( te_n)$$ is well-defined.
 Notice that $ay_1|_{y=0}=0$. Were if \eqref{viscosity sup equiv} false,  then
\[\lim_{t \to 0^+} D_n u (te_n)< -2\epsilon \]
for some $\epsilon >0$. Consider the tubular domain
\[ \Gamma= \{(x', 0)+st_0 e_n  |\, x'\in B'_r(0), \ s\in (0,1) \} \]
and choose the function
\[ v (x)= u(0)+\frac{  2\operatorname{Osc}_{B_r^+(0)}u}{r^2}|x'|^2   -\epsilon x_n   ,\]
where $t_0 $ and $r= r(\epsilon , t_0, u )$ are positive and so small  that
\[  D_nu(t_0 e_n )  <  -2\epsilon t_0,  \  \   2\operatorname{Osc}_{B_r^+(t_0e_n)}u \leq \epsilon t_0 \text{ and } |a|r  < \epsilon ,\]
which implies
\[   \sup_{y \in B_r(t_0e_n) \cup \partial  B_r^+(0)} (u(y)-v(y)) <0. \]
Notice that $v$ is linear along the $e_n$ direction and $u$ is convex, we find that
$v$ is strictly greater than  $u$ on the boundary $\partial \Gamma_1:= \overline{\partial \Gamma \setminus \R^{n-1}} $. Then, $v(0)=u(0)$
means that $v+c_0$ will touch $u$ by above at some point $z \in  \overline{\Gamma }\setminus \partial \Gamma_1$ for a proper $c_0 \in R$.
 Moreover, $\operatorname{det} D^2v=0 <1= \operatorname{det} D^2u $  implies $z \notin \Gamma $. Therefore, $z \in \partial \Gamma \cap R^{n-1}$. However,
\[D_{n}v(z) =  -\epsilon < -|a|r \leq az_1=D_{n}u(z), \]
which contradicts  the definition of viscosity supersolution. Thus, \eqref{viscosity sup equiv} is proved and \eqref{en growth neumann} follows by a direct integral.
\end{proof}

\begin{Corollary} \label{en continues}
	 If $u$ is a viscosity solution to problem \eqref{liouville problem} and $y \in \partial \R_+^{n}$, then there exists an element $p\in \partial u(y)$ such that $p\cdot e_n=D_n u(y)$.
\end{Corollary}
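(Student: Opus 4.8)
The plan is to obtain $p$ as a subsequential limit, as $t\to 0^+$, of the interior gradients $\nabla u(y+te_n)$; the only genuine work is to show these gradients stay bounded.

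By Lemma \ref{strict convex in interior} and the interior regularity \cite{[C1],[C3]}, $u\in C^{\infty}_{loc}(\R_+^n)$, so for $t>0$ the set $\partial u(y+te_n)$ is the single point $p_t:=\nabla u(y+te_n)$, and by the definition of $D_nu$ at an interior point together with Lemma \ref{viscosity neumann equiv} one has $p_t\cdot e_n=\partial_{x_n}u(y+te_n)=D_nu(y+te_n)\to D_nu(y)=ay_1$ as $t\to 0^+$. To control the tangential part of $p_t$, first normalize: since $y+e_n$ is an interior point, $\ell_0(x):=u(y+e_n)+\nabla u(y+e_n)\cdot(x-y-e_n)$ is a global support plane of $u$ on $\overline{\R_+^n}$ (approximate boundary points from the interior and use continuity of $u$ up to $\partial\R_+^n$), so $v:=u-\ell_0\ge 0$ on $\overline{\R_+^n}$, $v$ is bounded on $B_1^+(y)$, $\operatorname{det}D^2v=1$, and $\varlimsup_{s\to0^+}D_nv(z',s)=az_1-\nabla\ell_0\cdot e_n$ is bounded below for $z'$ near $y'$. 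Applying Lemma \ref{boundary nabla estimate} to $v$ with $y$ as the center, or simply testing the convexity inequalities $v(y+te_n\pm\tfrac12 e_j)\ge v(y+te_n)\pm\tfrac12\,\partial_{x_j}v(y+te_n)$ against $0\le v\le\sup_{B_1^+(y)}v$ for $j=1,\dots,n-1$, shows that $|\nabla v(y+te_n)|$, and hence $|p_t|$, is bounded uniformly for $0<t\le\tfrac12$.

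Hence there is a sequence $t_k\downarrow 0$ with $p_{t_k}\to p$. For each fixed $x\in\overline{\R_+^n}$ we have $u(x)\ge u(y+t_ke_n)+p_{t_k}\cdot(x-y-t_ke_n)$; letting $k\to\infty$ and using that $u$ is continuous up to $\partial\R_+^n$ gives $u(x)\ge u(y)+p\cdot(x-y)$ for all $x\in\overline{\R_+^n}$, i.e. $p\in\partial u(y)$. Finally $p\cdot e_n=\lim_k p_{t_k}\cdot e_n=\lim_k D_nu(y+t_ke_n)=ay_1=D_nu(y)$ by Lemma \ref{viscosity neumann equiv}, which is the assertion (and, in passing, shows $\partial u(y)\neq\emptyset$). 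The one delicate point is the uniform gradient bound as $t\to 0^+$: that is precisely where subtracting the interior support plane $\ell_0$ to make $v\ge 0$ and then invoking Lemma \ref{boundary nabla estimate} enter; everything else is the standard closedness of the graph of the subdifferential.
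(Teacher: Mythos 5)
Your proposal is correct and follows essentially the same route as the paper: take $p_t=\nabla u(y+te_n)$, establish a uniform bound as $t\to 0^+$ via Lemma \ref{viscosity neumann equiv} and Lemma \ref{boundary nabla estimate}, and pass to a subsequential limit of the support planes to get $p\in\partial u(y)$ with $p\cdot e_n=ay_1=D_nu(y)$. Your explicit normalization by subtracting an interior support plane before invoking Lemma \ref{boundary nabla estimate} is a reasonable way to make precise a step the paper leaves implicit, but it is not a different argument.
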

\begin{proof}
  Without loss of generality we  may assume $y=0$.   Let $p_t= \nabla u(te_n) $ and $l_t(x):=u(te_n) +p_t \cdot (x-te_n)$ be the support function
  of $u$ at $te_n$. Lemmas 3.2 and  2.5 implies that $p_t$ is uniformly bounded as $t\to 0^+$. Then, as $t\to 0^+$, $\{l_t \}$ is pre-compact  and contains
   a subsequence that converges to some support function $l(x):=u(0)+p \cdot  x $ of $u$ at $0$. Hence $p\in \partial u(0)$ and
     $D_n u(te_n) =p_t \cdot e_n$ monotonically decreases to $p\cdot e_n$.   Furthermore, by \eqref{viscosity sub equiv} and \eqref{viscosity sup equiv} we see that $p\cdot e_n=0=D_nu(0)$.
\end{proof}

\begin{Lemma} \label{linearized problem}
Suppose $u$ is a viscosity solution to problem \eqref{liouville problem}.
Then $D_n u \in C(\overline {\R_+^n})$  and $v(x):=D_nu(x)-D_nu(x',0)$ is a viscosity solution to following linearized problem:
\begin{equation} \label{linearized equation}
\begin{cases}
U^{ij} D_{ij}v=0        & \text{ in } {\R_+^n}\\
 v \geq 0      & \text{ on } \overline {\R_+^n}   \\
v= 0                       & \text{ on }  {\partial \R_+^{n}}
\end{cases},
\end{equation}
where $U^{ij}$ is the cofactor of the Hessian $D^2 u$.
\end{Lemma}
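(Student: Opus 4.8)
The plan is to check the three requirements on $v$ in \eqref{linearized equation} one at a time, the key input being that $u$ is smooth in the open half-space.

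\emph{Interior equation.} By Lemma \ref{strict convex in interior} the solution $u$ is strictly convex in $\R_+^n$, so Caffarelli's interior regularity \cite{[C1],[C3]} yields $u\in C^\infty_{loc}(\R_+^n)$; thus $D_nu\in C^\infty(\R_+^n)$, and since $\operatorname{det}D^2u\equiv1$ the cofactor matrix is $(U^{ij})=(D^2u)^{-1}$, smooth and positive definite on each compact subset of $\R_+^n$. Differentiating $\operatorname{det}D^2u=1$ in the $x_n$--direction gives $U^{ij}D_{ij}(D_nu)=0$ in $\R_+^n$; as $D_nu(x',0)=ax_1$ is affine and hence has vanishing second derivatives, $U^{ij}D_{ij}v=0$ holds in $\R_+^n$ classically, a fortiori in the viscosity sense.

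\emph{Sign of $v$.} For fixed $x'\in\R^{n-1}$ the map $t\mapsto u(x',t)$ is convex, so $D_nu(x',t)$ is nondecreasing in $t$; by Lemma \ref{viscosity neumann equiv} (equivalently Corollary \ref{en continues}) its limit as $t\to0^+$ is $ax_1$, whence $v(x',t)=D_nu(x',t)-ax_1\ge0$ for every $t>0$. This also follows directly from \eqref{en growth neumann}.

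\emph{Continuity up to $\partial\R_+^n$.} This is the only delicate point. For $t>0$ put $g_t(x'):=v(x',t)$; each $g_t$ is continuous and nonnegative on $\R^{n-1}$, the family $\{g_t\}_{t>0}$ is monotone ($g_s\le g_t$ for $0<s\le t$, by the previous step), and $g_t(x')\downarrow0$ pointwise as $t\downarrow0$ by Lemma \ref{viscosity neumann equiv}. Hence on any closed ball $\overline{B_r'(x_0')}$ Dini's theorem forces the monotone convergence of these continuous functions to the continuous limit $0$ to be uniform, and monotonicity in $t$ upgrades this from a sequence $t_k\downarrow0$ to the full family, so $g_t\to0$ uniformly on compact subsets of $\R^{n-1}$. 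Combined with the interior continuity of $v$, this shows $v\in C(\overline{\R_+^n})$ with $v\equiv0$ on $\partial\R_+^n$, and in particular $D_nu=v+ax_1\in C(\overline{\R_+^n})$. Collecting the three steps shows $v$ solves \eqref{linearized equation}. I expect this last step --- upgrading the pointwise monotone boundary limit of Lemma \ref{viscosity neumann equiv} to honest continuity up to $\partial\R_+^n$ --- to be the main obstacle; the interior equation and the sign of $v$ are routine once interior smoothness is in hand.
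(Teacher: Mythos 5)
Your proposal is correct and follows essentially the same route as the paper: interior smoothness from Lemma \ref{strict convex in interior} plus the classical interior estimates, the linearized equation by differentiating $\operatorname{det}D^2u=1$, the sign of $v$ from convexity and Lemma \ref{viscosity neumann equiv}, and continuity up to the boundary by upgrading the pointwise monotone convergence of $D_nu(\cdot,t)$ to uniform convergence on compacts (the paper invokes a finite-cover argument, which is exactly the proof of the Dini theorem you cite).
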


\begin{proof}
First, Lemmas $3.1$ and 2.13 means that $u\in C_{loc}^{\infty}({\R_+^n})$. Then it follows from the proof of Corollary \ref{en continues}
  that  the family of continuous function $f_t(x')=D_nu(x',t)$    point-wisely   decreases to  function $f_0(x') =D_nu(x',0)=ax_1$ monotonically
    as $t\to 0^+$. Finally, the finite cover Lemma  implies that this convergence is uniformly. Therefore $D_nu$ is continuous up to boundary.
	
The linearized equation $U^{ij} D_{ij}v=0  $  comes directly from the equation  $\operatorname{det}  D^2u =1$.
The other two  in \eqref{linearized equation} follows from the convexity of $u$ and  the continuity of $D_n u$ at the boundary.
\end{proof}

Next,  we introduce two functions,  $w^{y,+}(x)$ and $w^{y,-}(x)$, which will be used as the upper/below barriers of $v$ at $y$ to the linearized  problem \eqref{linearized equation}.

	Given $y\in  G_{\frac{1}{2}}^u(0)$ and consider the functions
	\begin{equation}\label{subtract support function}
		u^y(x)= u(x) -u(y)- \nabla u(y) \cdot (y-x)
	\end{equation}
	and
	\begin{equation} \label{test function}
		w^{y}(x) =u^y(x)-\frac{n}{2}x_nD_nu^y .
	\end{equation}
{ \sl Define
\begin{equation} \label{sup sub test function}
\begin{cases}
 w^{y,+}(x) =C_1[	w^{y}(x) +C_2x_n]\\
 w^{y,-}(x) =c_1[x_n-c_2	w^{y}(x) ]
\end{cases}.
\end{equation}
}
	Since $\operatorname{det} D^2u=1$,     we have $ U^{ij}D_{ij} D_n u=0$ in $\R_+^n$. Hence
\begin{equation} \label{sup sub test equationn}
 U^{ij}D_{ij}w^{y}=U^{ij}D_{ij}w^{0}= n \operatorname{det} D^2u-\frac{n}{2}(U^{in}D_{in}u+U^{ni}D_{ni}u)=0
\; \text{in}\;   \R_+^n.
\end{equation}

\begin{Lemma} \label{sup solution to lip}
Suppose that  $u$ is a viscosity solution to problem \eqref{liouville problem} and \eqref{u=0 nabla u =0} is satisfied.
 If
\begin{equation*}
B_c^+(0) \subset S_1(0) \subset B_C^+(0)
\end{equation*}
and
\begin{equation}\label{3.9}
	||D_nu ||_{L^{\infty}(S_1(0))} \leq C,
\end{equation}
then, $w^{0,+}(x)$ defined in \eqref{sup sub test function} is the upper barrier of $v$ at $0$ to the linearized problem \eqref{linearized equation}
in $S_1(0)$. (See \eqref{sup comparion} below).  This implies
\begin{equation}\label{en sup quadratic}
	D_n u(te_n) \leq Ct \ \  \text{and}\ \  u(te_n) \leq Ct^2
\end{equation}
 for all $t\geq 0$ such that $te_n\in S_1(0)$.
\end{Lemma}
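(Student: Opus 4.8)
The plan is to use the function $w^{0,+}$ defined in \eqref{sup sub test function} as an upper barrier for $v$ on the section $S_1(0)$, exploiting the comparison principle for the linearized operator $U^{ij}D_{ij}$ (which is degenerate-elliptic since $\det D^2u = 1 > 0$ makes $(U^{ij})$ positive definite in the interior). First I would record that $w^0$ already satisfies $U^{ij}D_{ij}w^0 = 0$ in $\R_+^n$ by \eqref{sup sub test equationn}, so $w^{0,+}$ is also a solution of the homogeneous linearized equation regardless of the choice of $C_1, C_2$. Thus the whole content is to choose the universal constants $C_1, C_2$ so that $w^{0,+} \geq v$ on $\partial S_1(0)$, after which Lemma \ref{comparison principle for mixed problem} (or rather the linear version of it) forces $w^{0,+} \geq v$ throughout $S_1(0)$; this is the inequality \eqref{sup comparion} referred to in the statement.

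The boundary of $S_1(0)$ splits into the interior part $\partial S_1(0) \cap \R_+^n$, where $u = u(0) + \nabla u(0)\cdot x + 1 = 1$ by \eqref{u=0 nabla u =0}, and the Neumann part $G_1(0) \subset \partial \R_+^n$. On the interior part I would estimate $w^{0}(x) = u^0(x) - \tfrac n2 x_n D_n u^0(x) = 1 - \tfrac n2 x_n D_n u(x)$; using the hypothesis \eqref{3.9} that $\|D_n u\|_{L^\infty(S_1(0))} \leq C$ and $S_1(0) \subset B_C^+(0)$, this is bounded below by $1 - C'x_n \geq 1 - C'' $, so adding $C_2 x_n$ with $C_2$ large (and then multiplying by $C_1$ large) makes $w^{0,+} \geq \mathrm{Osc}_{S_1(0)} v \geq v$ there, since $v = D_n u(x) - D_n u(x',0) = D_n u(x) - ax_1$ is bounded by a universal constant on $S_1(0)$ by \eqref{3.9}. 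On $G_1(0)$ we have $x_n = 0$, hence $w^{0,+} = C_1 w^0 = C_1 u^0 \geq 0$, while $v = 0$ there by Lemma \ref{linearized problem}; so the boundary inequality holds trivially on the Neumann part. The one point requiring a little care is the corner behaviour and the fact that $w^0$ itself is only known to be continuous up to the boundary via Lemma \ref{linearized problem}, not a priori $C^2$ there — but the comparison in Lemma \ref{comparison principle for mixed problem} only needs one of the two functions to be smooth in the interior, and $v$ is smooth in $\R_+^n$ by Lemmas \ref{strict convex in interior} and \ref{c2a classical estimate}, while the boundary inequalities are checked in the classical (continuous) sense.

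Once $v(x) = D_n u(x) - ax_1 \leq w^{0,+}(x) = C_1[w^0(x) + C_2 x_n]$ on $S_1(0)$, I specialize to $x = te_n$. There $ax_1 = 0$, $w^0(te_n) = u(te_n) - \tfrac n2 t D_n u(te_n)$, so the inequality reads
\[
D_n u(te_n) \leq C_1\Big( u(te_n) - \tfrac n2 t\, D_n u(te_n) + C_2 t\Big).
\]
Combining this with the convexity bound $u(te_n) \leq t\, D_n u(te_n)$ (monotonicity of the difference quotient along the ray, together with $u(0) = 0$) I get, after absorbing the $t\,D_nu(te_n)$ terms for small universal $t$, an estimate $D_n u(te_n) \leq C t$ for $t$ up to a universal size, and then for all $t$ with $te_n \in S_1(0)$ since $S_1(0) \subset B_C^+(0)$ bounds the range of $t$ and $D_nu$ is monotone increasing along $e_n$. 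Integrating $D_nu(se_n) \leq Cs$ from $0$ to $t$ gives $u(te_n) \leq \tfrac C2 t^2$, which is the second assertion of \eqref{en sup quadratic}. The main obstacle I anticipate is purely bookkeeping: making sure the constants $C_1, C_2$ chosen to dominate $v$ on $\partial S_1(0)$ stay universal, i.e. depend only on the data appearing in \eqref{3.9} and the inclusions $B_c^+(0)\subset S_1(0)\subset B_C^+(0)$, and checking that the degenerate comparison principle applies up to the corner $\partial S_1(0) \cap \partial\R_+^n$ — but neither of these is a genuine difficulty given the lemmas already assembled in Sections 2 and 3.
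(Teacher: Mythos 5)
Your proposal is correct and follows essentially the same route as the paper: verify $U^{ij}D_{ij}w^{0,+}=0$ from \eqref{sup sub test equationn}, use $u=1$ on $\partial S_1(0)\setminus G_1(0)$ together with \eqref{3.9} to get $w^{0}\geq 1-Cx_n$ there and choose $C_1,C_2$ large so that $w^{0,+}\geq C_1\geq \sup v$, note $w^{0,+}=C_1u\geq 0=v$ on $G_1(0)$, and conclude $v\leq w^{0,+}$ by the comparison principle. Your final step deriving \eqref{en sup quadratic} via $u(te_n)\leq t\,D_nu(te_n)$ and integration is exactly the "convexity" argument the paper leaves implicit (and in fact the coefficient $1-\tfrac n2\leq 0$ makes the absorption immediate for all admissible $t$, not just small $t$).
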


\begin{proof}
	It follows from \eqref{3.9}   that
$$v(x)=D_nu(x)-D_nu(x',0)\leq C, \ \
\forall x\in S_1(0)   $$
	and
$$u(x)-\frac{n}{2}x_nD_nu  \geq 1-Cx_n, \ \  \forall x\in \partial S_1(0) \setminus G_1(0)   .$$
In \eqref{sup sub test function} we choose $C_1$ and $C_2$ such that they are larger than the $C$.  Then, $w^{0,+}(x) \geq C_1 $
on $ \partial S_1(0) \setminus G_1(0) $  by   (3.6). This, together with    \eqref{subtract support function}-\eqref{sup sub test equationn}, we see that
\begin{equation} \label{sup comparion}
\begin{cases}
U^{ij} D_{ij}w^{0,+}=0        & \text{ in } S_1(0)\\
 w^{0,+} \geq v     & \text{ on } \partial S_1(0) \setminus G_1(0)  \\
w^{0,+}\geq 0 =v                     & \text{ on }  {G_1(0)}
\end{cases}.
\end{equation}
Hence, the comparison principle   implies that $ v(x) \leq w^{0,+}(x)$ in  $S_1(0)$,
which, together with the convexity, yields the desired \eqref{en sup quadratic}.
\end{proof}

\begin{Lemma} \label{sub solution by low bar}
		Under the assumption of Lemma \ref{sup solution to lip},  there exists  a small $\bar{\sigma}>0$ depending on $\sigma_0$ such that if
	\begin{equation}\label{low bar assump}
	v(x) \geq \sigma_0(x_n)-\bar{\sigma} \text{ in } S_1(0),
	\end{equation}
	then $w^{0,-}(x)$ defined in \eqref{sup sub test function} is the lower barrier of $v$ at $0$ to the linearized problem \eqref{linearized equation} in $S_1(0)$
(see \eqref{sub quadratic} below).
This implies  that
\begin{equation} \label{en sub quadratic}
 v(te_n) \geq ct\ \  and \ \ u(te_n) \geq ct^2
 \end{equation}
 for all $t\geq 0$ such that $te_n\in S_1(0)$.
\end{Lemma}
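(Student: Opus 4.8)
The plan is to run the argument of Lemma \ref{sup solution to lip} with the lower test function $w^{0,-}$ in place of the upper one $w^{0,+}$; the one genuinely new point is that, since $w^{0,-}$ is \emph{positive} in the interior along the $e_n$-axis, controlling it on $\partial S_1(0)\setminus G_1(0)$ forces us to use the quantitative interior positivity \eqref{low bar assump} of $v$ (the trivial lower barrier $0$ is too weak). Since $x_n$ is affine, $U^{ij}D_{ij}x_n=0$, so \eqref{sup sub test equationn} gives $U^{ij}D_{ij}w^{0,-}=0$ in $S_1(0)$. By \eqref{3.9} we have $|D_nu|\le C$ on $S_1(0)$, and the hypothesis $B_c^+(0)\subset S_1(0)\subset B_C^+(0)$ of Lemma \ref{sup solution to lip} bounds $x_n$ there. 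Recall also that under \eqref{u=0 nabla u =0} one has $u^0=u\ge 0$, $w^0(x)=u(x)-\tfrac n2 x_nD_nu(x)$, and $D_nu(0)=0$.

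The main step is to verify the boundary inequalities needed for the comparison. On $G_1(0)$ we have $x_n=0$, hence $w^0=u\ge 0$ and $w^{0,-}=-c_1c_2u\le 0=v$, with no hypothesis needed. On $\partial S_1(0)\setminus G_1(0)$ we have $u=1$, so $w^0(x)=1-\tfrac n2 x_nD_nu(x)$. Fix a small universal $\delta_0$ with $\tfrac n2\delta_0 C\le\tfrac12$, so that $w^0\ge\tfrac12$ whenever $x_n\le\delta_0$, and then pick $c_2$ universal with $2\delta_0\le c_2$ and $c_2$ small (for the last step). If $x_n\le\delta_0$ then $w^{0,-}(x)=c_1\bigl(x_n-c_2w^0(x)\bigr)\le c_1\bigl(\delta_0-\tfrac{c_2}{2}\bigr)\le 0\le v(x)$, using $v\ge 0$. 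If $x_n>\delta_0$ then \eqref{low bar assump} and monotonicity of $\sigma_0$ give $v(x)\ge\sigma_0(\delta_0)-\bar\sigma$, so choosing $\bar\sigma\le\tfrac12\sigma_0(\delta_0)$ (this is where $\bar\sigma$ acquires its dependence on $\sigma_0$) yields $v(x)\ge m_0:=\tfrac12\sigma_0(\delta_0)>0$; since $|w^0|\le C$ and $x_n\le C$ give $w^{0,-}(x)\le Cc_1$, taking $c_1\le m_0/C$ gives $w^{0,-}(x)\le m_0\le v(x)$. Hence
\begin{equation}\label{sub quadratic}
\begin{cases}
U^{ij}D_{ij}w^{0,-}=0 & \text{ in } S_1(0),\\
w^{0,-}\le v & \text{ on } \partial S_1(0)\setminus G_1(0),\\
w^{0,-}\le 0 =v & \text{ on } G_1(0),
\end{cases}
\end{equation}
and the comparison (maximum) principle for the linearized operator $U^{ij}D_{ij}$ yields $v\ge w^{0,-}$ in $S_1(0)$.

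It remains to read off \eqref{en sub quadratic} along the axis. Since $t\mapsto D_nu(te_n)$ is nondecreasing (convexity of $u$) and $D_nu(0)=0$, we have $w^0(te_n)=u(te_n)-\tfrac n2 tD_nu(te_n)\le u(te_n)\le Ct^2$ by \eqref{en sup quadratic}, whence
\[ v(te_n)\ge w^{0,-}(te_n)=c_1\bigl(t-c_2w^0(te_n)\bigr)\ge c_1 t\,(1-Cc_2 t)\ge ct \]
for all $te_n\in S_1(0)\subset B_C^+(0)$, provided $c_2$ was chosen small enough that $Cc_2 t\le\tfrac12$ for $t\le C$. Since $v(te_n)=D_nu(te_n)$, integrating in $t$ gives $u(te_n)\ge ct^2$, proving \eqref{en sub quadratic}. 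I expect the only delicate part to be conceptual rather than computational: one cannot dominate $w^{0,-}$ by the boundary values of $v$ (which merely vanish to first order on $\partial\R_+^n$), so one must know $v$ is bounded below by a fixed positive amount at a definite distance from $\partial\R_+^n$, which is precisely \eqref{low bar assump}; the smallness parameters must then be chosen in the order $\delta_0$, $c_2$ (both universal), and finally $m_0$, $c_1$, $\bar\sigma$ (allowed to depend on $\sigma_0$).
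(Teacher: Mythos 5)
Your proposal is correct and follows essentially the same route as the paper: verify $U^{ij}D_{ij}w^{0,-}=0$, check the two boundary pieces by splitting $\partial S_1(0)\setminus G_1(0)$ at a small $x_n$-threshold (using \eqref{low bar assump} only where $x_n$ is bounded below), apply the comparison principle, and read off the linear bound along the $e_n$-axis. The only difference is bookkeeping: the paper sets the threshold $c_0=\sigma_0^{-1}(2\bar\sigma)$ and $c_2=2c_0$, whereas you fix a universal $\delta_0$ and put the $\sigma_0$-dependence into $\bar\sigma$ and $c_1$ — both allocations work.
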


\begin{proof}
	Denote $K=Cn||D_nu ||_{L^{\infty}(S_1(0))}$. Assume that $\bar \sigma >0$ is small and satisfies
	\[  \sigma_0^{-1}(2\bar \sigma) \leq \frac{1}{2K}.\]
	Recall that $ \sigma_0$ denotes a strictly increasing and continuous function from $[0, +\infty )$ to $[0, +\infty )$ with $\sigma_0(0)=0$.
Let $c_0= \sigma_0^{-1}(2\bar{\sigma})$. By
	\eqref{low bar assump} and the convexity we have
	\[ v(x) \geq \max\{\sigma_0(x_n)-\bar{\sigma},0\} \geq \bar{\sigma} \chi_{\{x_n \geq c_0\}},\]
	where $\chi_E$ denotes the characteristic function of the set $E$, i.e., $\chi_E(x)=1$ if $x\in E$, otherwise $\chi_E(x)=0$.
Take  $c_1=\frac{\bar{\sigma}}{2c}$ and $c_2=2c_0$. We have
		\[w^{0}(x) \geq 1-Kx_n, \ \  on \ \ \partial S_1(0) \setminus G_1(0). \]
Hence on $\partial S_1(0) \setminus G_1(0)$,  we see that
\begin{align*}
	w^{0,-}(x)& \leq c_1[x_n+c_2(Kx_n-1)] \\
	& \leq c_1[(2x_n-c_2) \chi_{\{x_n \leq c_0\}}+2x_n\chi_{\{x_n \geq c_0\}}] \\
	& \leq 2c_1 C\chi_{\{x_n \geq c_0\}} \\
	& = \bar{\sigma} \chi_{\{x_n \geq c_0\}}  \leq v(x).
\end{align*}
In conclusion, we have
\begin{equation} \label{sub quadratic}
	\begin{cases}
		U^{ij} D_{ij}w^{0,-}=0        & \text{ in } S_1(0)\\
		w^{0,-} \leq v    & \text{ on } \partial S_1(0) \setminus G_1(0)  \\
		w^{0,-}\leq 0    =v                  & \text{ on }  {G_1(0)}
	\end{cases}.
\end{equation}
The comparison principle   implies that $ w^{0,-}(x) \leq v(x)$ in $S_1(0)$.  In particular,
  $$v(te_n)  \geq w^{0,-}(te_n) \geq  c_1(t- Cc_2t)\geq \frac{c_1}{2}t$$
   when $c_2=2\sigma_0^{-1}(2\bar \sigma)>0$ is small, which is equivalent to $\bar \sigma>0$ is small
   enough.   This proves \eqref{en sub quadratic}.
\end{proof}

\begin{Corollary} \label{lip unversal by sup}
	Under the assumption of Lemma \ref{sup solution to lip}, there exists a small $\rho >0$ such that if
	\begin{equation} \label{lip unversal by eq}
		\sup_{B_{\rho}'(0)} ||  u(y' ,0)|| \leq \sigma_0 (\rho) \rho,
	\end{equation}
	then for any $y' \in B_{\rho}'(0)$ and $y=(y, 0)$, the function  $w^{y,+}(x)$ defined in \eqref{sup sub test function} is the upper barrier of $v$ at $y$ to the
linearized problem \eqref{linearized equation}. This also implies that $u^y(y+te_n) \leq Ct^2$ for small $t>0$ if $u(y)=0$ and $\nabla u(y)=0$.
\end{Corollary}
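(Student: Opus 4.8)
The plan is to repeat the barrier argument from the proof of Lemma~\ref{sup solution to lip} almost verbatim, but with the centre moved from $0$ to the nearby boundary point $y=(y',0)$. The only genuinely new point is that the extra quantities produced by this shift --- namely $u(y)$, $\nabla u(y)\cdot(x-y)$ and $\nabla u(y)\cdot e_n=ay_1$ --- are all small once $\rho$ is chosen small. First, by \eqref{lip unversal by eq} and the normalization \eqref{u=0 nabla u =0} we have $0\le u(y',0)\le\sigma_0(\rho)\rho<\tfrac12$ on $B_\rho'(0)$ for $\rho$ small, so $(y',0)\in G_{1/2}^u(0)$ and the functions $u^y$, $w^y$, $w^{y,+}$ of \eqref{subtract support function}--\eqref{sup sub test function} are well defined.

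Next I would bound $\nabla u(y)$. The restriction $u(\cdot,0)$ is convex and nonnegative on $B_\rho'(0)$ with supremum at most $\sigma_0(\rho)\rho$, so (possibly after halving $\rho$) the standard interior Lipschitz bound for convex functions --- in the spirit of Lemma~\ref{boundary nabla estimate} --- shows that the tangential part of any $p\in\partial u(y)$ has norm $\le C\sigma_0(\rho)$; combined with $\nabla u(y)\cdot e_n=ay_1$ and $|ay_1|\le|a|\rho$ this gives $|\nabla u(y)|\le\sigma_2(\rho)$. Since $\operatorname{diam}S_1^u(0)\le C$, shrinking $\rho$ once more we may assume $u(y)+|\nabla u(y)\cdot(x-y)|\le\tfrac12$ for every $x\in S_1^u(0)$.

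Then I would apply the comparison principle on $\Omega=S_1^u(0)$. On the Neumann part $G_1^u(0)$ we have $x_n=0$, hence $w^{y,+}(x)=C_1u^y(x)\ge0=v(x)$, since $u^y$ is $u$ minus its support plane at $y$ and is therefore nonnegative by convexity. On $\partial S_1^u(0)\setminus G_1^u(0)$ we have $u(x)=1$, hence $u^y(x)\ge\tfrac12$ by the previous step, while $|D_nu^y(x)|\le\|D_nu\|_{L^\infty(S_1)}+|a|\rho\le C$ by \eqref{3.9}; therefore $w^y(x)\ge\tfrac12-Cx_n$, and choosing $C_2$ and then $C_1$ in \eqref{sup sub test function} large enough (using $v\le C$ from \eqref{3.9}) gives $w^{y,+}(x)\ge\tfrac{C_1}{2}\ge v(x)$ there. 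Since $U^{ij}D_{ij}w^{y,+}=0$ in $\R_+^n$ by \eqref{sup sub test equationn} and linearity, Lemma~\ref{comparison principle} yields $v\le w^{y,+}$ in $S_1^u(0)$; this is the asserted upper-barrier property (the analogue of \eqref{sup comparion}). Finally, if in addition $u(y)=0$ and $\nabla u(y)=0$, then $ay_1=0$ and $u^y\equiv u$, so $v(y+te_n)=D_nu(y+te_n)$; plugging $x=y+te_n$ into $v\le w^{y,+}$ gives $D_nu(y+te_n)\bigl(1+\tfrac n2C_1t\bigr)\le C_1u(y+te_n)+C_1C_2t$, so $g(t):=u(y+te_n)$ satisfies $g'(t)\le C(g(t)+t)$ with $g(0)=0$ and $g\ge0$; Gronwall then gives $u^y(y+te_n)=g(t)\le Ct^2$ for small $t$, exactly as at the end of the proof of Lemma~\ref{sup solution to lip}.

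I do not expect a deep new difficulty here beyond Lemma~\ref{sup solution to lip}. The one step that needs care is making the shift terms small uniformly in $y\in B_\rho'(0)$ --- i.e.\ the boundary-gradient estimate $|\nabla u(y)|\le\sigma_2(\rho)$ together with the bookkeeping of the constants $C_1,C_2$ --- so that a single small choice of $\rho$ turns $w^{y,+}$ into a barrier on the whole curved piece $\partial S_1^u(0)\setminus G_1^u(0)$ at once.
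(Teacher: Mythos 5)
Your argument is correct and follows essentially the same route as the paper: use \eqref{lip unversal by eq} to make $|\nabla u(y)|$ small uniformly for $y'\in B_\rho'(0)$, and then run the same barrier/comparison argument as in Lemma \ref{sup solution to lip} with the centre shifted to $y$. The only cosmetic difference is that the paper performs the comparison in $S_{1/2}(y)\subset S_1(0)$ and simply ``regards $y$ as $0$'' in Lemma \ref{sup solution to lip}, whereas you re-verify the boundary inequalities directly on $\partial S_1(0)$ and extract the $Ct^2$ bound by a Gronwall step instead of the convexity inequality $u(y+te_n)\le tD_nu(y+te_n)$; both are fine.
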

\begin{proof}
	 If $\rho$ is small,  the assumption implies that
	\[	\sup_{B_{\rho}'(0)} ||\nabla u (y' ,0|| \leq \min\{ \sigma_0 (\rho) , C\rho\} <<\frac{1}{2}. \]
	Then, the section
	\[ S_{\frac{1}{2}}(y) \subset S_{1-C|\nabla u(y)|}(y) \subset  S_1(0). \]
	Regarding $y$ as $0$ in Lemma \ref{sup solution to lip}, we see that this corollary is a direct consequence of Lemma \ref{sup solution to lip}.
\end{proof}

Similarly, we have

\begin{Corollary}%3.8
	Under the assumption of Lemma \ref{sub solution by low bar}, there exists a small $\rho>0$ such that if \eqref{lip unversal by eq} holds,
	then for any $y' \in B_{\rho}'(0)$ and $y=(y, 0)$, the function  $w^{y,-}(x)$ defined in \eqref{sup sub test function} is the lower barrier of $v$ at $y$ to
the linearized problem \eqref{linearized equation}. This also implies $u^y(y+te_n) \geq ct^2$ for small $t>0$ if $u(y)=0$ and $\nabla u(y)=0$.
\end{Corollary}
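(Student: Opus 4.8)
The plan is to mirror the proof of Corollary \ref{lip unversal by sup} essentially verbatim, replacing the upper-barrier input Lemma \ref{sup solution to lip} by the lower-barrier input Lemma \ref{sub solution by low bar}; the present statement is the exact dual of Corollary \ref{lip unversal by sup}, which is why the paper simply says ``Similarly''. The one structural observation that legitimizes the transfer is that the auxiliary function $v(x)=D_nu(x)-D_nu(x',0)$ is unchanged when $u$ is replaced by $u^y$ of \eqref{subtract support function}: the function $u^y$ differs from $u$ by a linear function, whose normal derivative is a constant that cancels when forming $v(x)=D_nu(x)-D_nu(x',0)$. Hence the hypothesis \eqref{low bar assump}, namely $v\geq\sigma_0(x_n)-\bar\sigma$ on $S_1(0)$, persists verbatim on every subset of $S_1(0)$, and $w^{y}$ of \eqref{test function} together with the lower barrier $w^{y,-}$ of \eqref{sup sub test function} are precisely the objects produced by applying Lemma \ref{sub solution by low bar} to $u^y$ centered at $y$.

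Concretely I would argue as follows. For $\rho$ small, \eqref{lip unversal by eq} together with Lemma \ref{boundary nabla estimate} (using $0\leq u\leq\sigma_1(|x|)$ near $0$, valid by \eqref{u=0 nabla u =0} and continuity) gives $\sup_{B'_\rho(0)}|\nabla u(y',0)|\leq\min\{\sigma_0(\rho),C\rho\}\ll\tfrac12$. Since $u^y$ differs from $u$ by a linear function of value and slope $O(\sigma_0(\rho))$ and $S_1(0)\subset B_C^+(0)$ is bounded, for $\rho$ small one obtains the nested inclusions $B_c^+(y)\subset S^u_{1/2}(y)\subset S^u_{1-C|\nabla u(y)|}(y)\subset S_1(0)\subset B_C^+(y)$, together with $\|D_nu^y\|_{L^\infty(S^u_{1/2}(y))}\leq\|D_nu\|_{L^\infty(S_1(0))}+|\nabla u(y)\cdot e_n|\leq C$. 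After the fixed affine rescaling normalizing $u^y$ on the height-$\tfrac12$ section $S^u_{1/2}(y)$ to a convex solution of $\operatorname{det}D^2(\cdot)=1$ on a height-$1$ section trapped between $B_c^+(0)$ and $B_C^+(0)$ --- which only perturbs the universal constants and replaces $\sigma_0,\bar\sigma$ by comparable ones --- every hypothesis of Lemma \ref{sub solution by low bar} holds at $y$. That lemma then yields that $w^{y,-}$ satisfies \eqref{sub quadratic} with $0$ replaced by $y$, i.e.\ $w^{y,-}$ is the lower barrier of $v$ at $y$, and the estimate \eqref{en sub quadratic} centered at $y$ gives $u^y(y+te_n)\geq ct^2$ for small $t>0$ when $u(y)=0$ and $\nabla u(y)=0$.

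The only genuine point to watch --- the main obstacle, such as it is --- is the bookkeeping: one must check that the linear correction hidden in $u^y$ displaces the section $S^u_{1/2}(y)$ by an amount controlled by $\sigma_0(\rho)$, so that both the inner half-ball $B_c^+(y)$ and the outer ball $B_C^+(y)$ bounds survive for $\rho$ small, and that the renormalization to height $1$ does not spoil the smallness threshold $\bar\sigma$ of Lemma \ref{sub solution by low bar}. No new idea is needed beyond what already drove Corollary \ref{lip unversal by sup}.
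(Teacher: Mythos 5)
Your proposal is correct and follows essentially the same route the paper intends: the paper proves this corollary only by the word ``Similarly,'' referring to the proof of Corollary \ref{lip unversal by sup}, which is exactly the argument you give --- small gradient on $B'_\rho(0)$ from \eqref{lip unversal by eq}, the inclusion $S_{\frac{1}{2}}(y)\subset S_{1-C|\nabla u(y)|}(y)\subset S_1(0)$, and then regarding $y$ as the origin in Lemma \ref{sub solution by low bar}. Your added observation that $v=D_nu(x)-D_nu(x',0)$ is invariant under subtracting the support plane, so that hypothesis \eqref{low bar assump} persists on the smaller section, is precisely the point that makes the transfer legitimate.
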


\vspace{8pt}

\section{Strict Convexity Lemma and Examples}%4

Recalling that if $u\in C(\overline{B_1^+(0) })$  is a  convex function and satisfies
\begin{equation}\label{4.1}
	\lambda\leq \operatorname{det}D^2u \leq \Lambda \text{ in } B_1^+(0),
\end{equation}
  we can naturally {\sl define
$$ D_n u(x',0) =\varlimsup_{t\to0+}D_n u(x',t) , \ \ \forall x' \in  B'_1(0). $$}
See Lemma 3.2 for the proof.
 In this section, we first study the strict convexity for such functions. The we provide an example showing that the growth assumption
 for the strict convexity is optimal.

\begin{Lemma}\label{strict convex lemma half}
	Given positive constants $H, M_1$ and $ M_2$.  Suppose that $u\in C(\overline{B_1^+(0) })$  is a  convex  function satisfying \eqref{4.1} and
	\begin{equation*}
		   D_n u(x',0) \geq -M_2 \text{ in } B'_1(0) .
	\end{equation*}
 There exists $\sigma=\sigma (H,M_1,  M_2, \lambda,  \Lambda, n)>0 $ such that if $u$ satisfies
	\begin{equation*}
		0 \leq u( x_1, x'',0 ) \leq  \sigma +  M_1|x''|^2   \text{ in } B_1'(0),
	\end{equation*}
	 then
	\begin{equation*}
		\sup_{B_1^+(0) }u \geq H.
	\end{equation*}
\end{Lemma}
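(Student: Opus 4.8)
The plan is to argue by contradiction and compactness. Suppose the assertion fails for some $H,M_1,M_2$: there exist $\sigma_k\downarrow0$ and convex $u_k\in C(\overline{B_1^+(0)})$ satisfying \eqref{4.1}, $D_nu_k(x',0)\ge-M_2$ and $0\le u_k(x_1,x'',0)\le\sigma_k+M_1|x''|^2$ on $B_1'(0)$, but $\sup_{B_1^+(0)}u_k<H$. From $u_k\ge0$, $D_nu_k(\cdot,0)\ge-M_2$ and convexity one gets $u_k(x',x_n)\ge u_k(x',0)-M_2x_n\ge-M_2$, so $-M_2\le u_k\le H$ throughout. Passing to a subsequence, $u_k\to u_\infty$ locally uniformly in $B_1^+(0)$ with $u_\infty$ convex and $\lambda\le\det D^2u_\infty\le\Lambda$ by stability of Monge–Amp\`ere measures.

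Next I transfer the boundary data by the chord inequality: for $(x_1,x'',0)$ with $|(x_1,x'')|<1$ and $0<t<s<\sqrt{1-|(x_1,x'')|^2}$, convexity along $e_n$ gives $u_k((x_1,x'',0)+te_n)\le\frac{s-t}{s}(\sigma_k+M_1|x''|^2)+\frac ts H$; letting $k\to\infty$, then $t\to0^+$, shows $\varlimsup_{t\to0^+}u_\infty((x_1,x'',0)+te_n)\le M_1|x''|^2$, so $u_\infty\to0$ along the whole segment $I:=\{(x_1,0,0):|x_1|<1\}\subset\partial B_1^+(0)$, and likewise $u_\infty\ge-M_2x_n$. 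Along $p_t:=te_n$ one checks $u_\infty(p_t)\to0$ and (using the one–sided estimates just obtained in the $e_1$– and $x''$–directions) $\nabla u_\infty(p_t)\to(0,0,d_0)$ with $d_0$ finite; hence $u_\infty(x)\ge d_0x_n$, and replacing $u_\infty$ by $w:=u_\infty-d_0x_n$ reduces everything to: $w\ge0$ convex on $B_1^+(0)$, $\lambda\le\det D^2w\le\Lambda$, $w$ bounded, $w$ vanishing on $I$, $w\le M_1|x''|^2$ on the flat boundary near $I$, and $w(p_t)\to0$, $\nabla w(p_t)\to0$.

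The heart of the matter is that such a $w$ cannot exist. Using the chord inequality between $(x_1,x'',0)$ and a point at height of order $1$, plus the elementary gradient estimates for convex functions in the spirit of Lemma~\ref{boundary nabla estimate}, one gets $0\le w\le M_1|x''|^2+Cx_n$ near $I$; since $w$ is convex with small value and small transverse gradient there, its $x''$–Hessian is bounded above near $I$. As $w$ is strictly convex, hence smooth, in the interior (Lemma~\ref{strict classical line lemma}), Fischer's inequality $\lambda\le\det D^2w\le\det(D^2w|_{x''})\cdot\det(D^2w|_{\{x_1,x_n\}})$ then keeps the $2\times2$ minor $w_{x_1x_1}w_{x_nx_n}-w_{x_1x_n}^2$ bounded below by some $c_0>0$ near $I$. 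So the slice $\bar w(x_1,x_n):=w(x_1,0,x_n)$ is a nonnegative, bounded, convex function on a $2$–dimensional half–disc that vanishes on the codimension–one boundary segment $I$ while its Monge–Amp\`ere measure stays $\ge c_0$ near $I$—which is impossible: approaching $I$, the tangential curvature $\bar w_{x_1x_1}$ tends to $0$ in an averaged sense (convexity and the vanishing of $\bar w$ on $I$, quantified by the gradient estimate and the volume bounds of Lemmas~\ref{volume bound by measure} and~\ref{c1/n boundary estimate}), so $\bar w_{x_nx_n}\ge c_0/\bar w_{x_1x_1}$ blows up, and integrating twice in $x_n$ contradicts the boundedness of $\bar w$. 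I expect this last step, together with the $x''$–Hessian bound that feeds it, to be the main obstacle: $w$ is a priori only twice differentiable in the open half–ball while $I$ lies on $\partial B_1^+(0)$, so the argument must be run through interior a priori estimates and careful chord/ODE estimates on cylinders shrinking to $I$ rather than by differentiating on the boundary. (For $n=2$ there is no $x''$, $I$ is the full boundary diameter, and only this last two–dimensional step is needed.)
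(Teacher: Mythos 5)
Your opening (contradiction plus compactness of $\{u_k\}$) is fine, but from there you take a genuinely different route from the paper, and the route you take has real gaps. The paper does \emph{not} pass to the limit on $B_1^+(0)$ directly: it first rescales anisotropically by $\D_k=\operatorname{diag}(1,h_k^{1/2}\I'',h_k)$ with $h_k=\sigma_k^{1/2}$ (this kills the $\sigma_k$ and preserves both the Monge--Amp\`ere bounds and the growth bounds), obtains a limit on an infinite slab which, after a second rescaling in $(x'',x_n)$, is exactly linear along a ray; subtracting the supporting plane at a point of that ray produces a nonnegative solution whose section $\{w<1\}$ contains a ray and hence has infinite volume, contradicting Lemma~\ref{volume bound by measure}. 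That soft volume argument is precisely what avoids the delicate pointwise analysis near the degenerate segment that your proposal relies on.

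The two concrete gaps in your argument are these. First, the bound $0\le w\le M_1|x''|^2+Cx_n$ does \emph{not} give a pointwise upper bound on the $x''$--Hessian near $I$: convexity only controls second differences in an averaged sense, and a convex function below $|x''|^2$ can have arbitrarily large $w_{ii}$ on small sets. Without a pointwise bound on $\det(D^2w|_{x''})$, Fischer's inequality gives no pointwise lower bound on $w_{11}w_{nn}-w_{1n}^2$, and in any case the Monge--Amp\`ere measure of the restriction of $w$ to the null 2--plane $\{x''=0\}$ is not controlled from below by the ambient measure (this is exactly the degeneration exhibited by Example~4.3). Second, even granting a 2D slice $\bar w$ with $\det D^2\bar w\ge c_0$, your closing step is wrong: $\bar w_{nn}\gtrsim 1/x_n$ integrates twice to $x_n\log x_n$, which is bounded, so ``integrating twice contradicts boundedness'' fails. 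Indeed $u(x_1,x_n)=\tfrac12x_1^2x_n+c(x_n\log x_n-x_n)$ is bounded, convex on $(-1,1)\times(0,1)$, vanishes on the entire flat boundary, and has $\det D^2u=c-x_1^2\in[\lambda,\Lambda]$. What rules this out in your setting is the normal--derivative information ($D_nu\ge -M_2$, equivalently $w\ge0$ with $w=0$ on $I$, forcing $\partial_n\bar w\ge0$ and hence $\int_0^1\bar w_{nn}\,dx_n\le C$), combined with a \emph{single} integration of $\bar w_{nn}\ge c/x_n$ on a set of tangential directions of definite measure; you never use the Neumann lower bound in the final step, so as written the contradiction does not materialize.
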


\begin{proof}
	Assume by way of contradiction that there exists $\sigma_k \to 0^+$ and convex functions $u_k$ defined on $B_1^+(0) \subset \R^n$, satisfying
	\[0 < \lambda \leq \operatorname{det}  D^2u_k \leq \Lambda  \text{ in } B_1^+(0) , \     D_n u_k(x',0) \geq -M_2  \text{ in } B'_1(0) ,\]
	\[
	0 \leq u_k(x_1, x'',0) \leq \sigma_k +  M_1|x''|^2  \text{ in } B'_1(0)  ,
	\]
	and
	\[
	\sup_{B_1^+(0) }u_k \leq H.
	\]
	Applying Lemma \ref{boundary nabla estimate}  we see  that $u_k$ is locally Lipschitz with
$$||u_k||_{Lip(\overline{B_{4c}^+(0) })} \leq C(H+M_2).$$  Hence,
\[u_k ( x ) \leq \sigma_k+ M_1|x''|^2  +C(H+M_2)|x_n| \text{ in } B_{4c}^+(0).\]
	Let $h_k= \sigma_k^{\frac{1}{2}}$ and $\D_k=\operatorname{diag}(1, h_k^{\frac{1}{2}}\I'', h_k) $. Consider the functions
	\[	v_k( x)= \frac{u_k( \D_k x)}{h_k},\ x\in \Omega_k:= \D_k^{-1}B_{4c}^+(0) .\]
    Then $0 < \lambda \leq \operatorname{det}  D^2v_k \leq \Lambda $ in $   \Omega_k $,
    \[ v_k ( x ) \leq \sigma_k^{\frac{1}{2}}+ M_1|x''|^2  +C(H+M_2)|x_n| \text{ in } \D_k^{-1} B_{1}^+(0) \]
    and \[    D_n v_k(x',0) \geq -M_2  \text{ on } \D_k^{-1} B'_{1}(0) . \]
	Similar to Lemma \ref{boundary nabla estimate},  for any given $R>0$, we can prove that
	\[
	||v_k||_{Lip (B_R(0)\cap \Omega_k)} \leq  C( H+M_1+M_2)R^2.
	\]
	Therefore, $v_k$ is locally uniformly Lipschitz, and there is a subsequence that locally uniformly converges to
a convex  function $v$ defined on $ \Om := [-4c,4c] \times \R^{n-1}$. Moreover, we have
	\begin{equation} \label{4.2}
		0 < \lambda \leq \operatorname{det}  D^2v \leq \Lambda,
	\end{equation}
	and
	\begin{equation} \label{4.3}
		  v( x ) \leq  M_1|x''|^2  +C(H+M_2)|x_n| \text{ in } B_{4c}^+(0) ,  \     D_n v(x',0) \geq -M_2  \text{ on } B_R'(0).
	\end{equation}
	Furthermore, \eqref{4.3} implies that
	\begin{equation}\label{4.4}
		 a(x_1 )= \lim_{t \to 0^+}\frac{ v (x_1, 0,t )}{t}
	\end{equation}
	is well-defined, and
	\[ -M_2 \leq a(x_1) \leq C( H+M_2).\]
	Again, we can consider the blow-up of $v$ along axis $e_1$ by taking
$$\bar{h}_k= 2^k, \;  \bar{\D}_k:=\operatorname{diag}(1, \bar{h}_k^{\frac{1}{2}}\I'', \bar{h}_k), \;
   \bar{v}_k( x)= \frac{v( \bar{\D}_k x)}{\bar{h}_k}, \; x\in\bar{\D}_k^{-1}\Om .$$
    The limit $\bar{v}$ still satisfies \eqref{4.2}, \eqref{4.3}. And \eqref{4.4} gives
	\[	\bar{v}( x_1, 0,x_n )=  a(x_1)x_n.\]

	 Obviously, $\nabla \bar v(e_n) \cdot e_n=a(x_1)$.  The non-negative function
	\[w(x)=\bar v(x)-\bar v(e_n) -\nabla \bar v(e_n)   \cdot x, \]
	  is  zero on the line $te_n$.
	 % By convexity, on tubular domain $E= [-c, c]\times B_{1}''(0)\times [0,\infty)$.
%	\[ w( x',x_n) \leq \frac{w(2x',0)+w(0,2x_n)}{2} \leq C(H+M_1+M_2).\]
	This is, $w$ is  bounded on a  convex domain with infinite volume. However, $ 0 < \lambda \leq \operatorname{det}  D^2w \leq \Lambda$, which contradicts with Lemma 2.11.

%   	(One could also use convexity to prove $a(x_1)$  is constant. )	
%	Next, we prove that $a(x_1)$ is constant.  By convexity
%	\[ 2v( x_1,0,x_n ) \leq  v(  2x_1-y_1 ,0,0 )+ v( y_1, 0, 2x_n ) =2  v( y_1,0, x_n ).\]
%	Thus $a(x_1  ) =a$ is constant. Let $w( x)=v ( x )-ax_1 $, (3.9) implies
%	\[ 2w( x_1, x'' ,x_n  ) \leq  w(2x_1, x'' ,0  )+ w( 0, 0 ,2x_n ) \leq  CM_1|x''|^2\]
%	and
%	\[ 0 \leq 2w( \frac{1}{2}x_1, 0 , \frac{1}{2}x_n  ) \leq  w( x_1, x'' ,x_n   )+ w( 0, -x'' ,0  ) \leq  w( x_1, x'' ,x_n   )+  CM_1|x''|^2.\]
%	In summmary, we have
%	\begin{equation}
%		|w ( x)| \leq CM_1|x''|^2 \text{ in } R^{n-1} \times [-2c,2c].
%	\end{equation}
	
\end{proof}

\begin{Lemma} \label{strict convex lemma plane}
	Given positive constants $H$ and $M_1 $. Suppose $u\in C(\overline{B_1(0) })$  is a  convex  function satisfying
	\begin{equation*}
		\lambda \leq \operatorname{det}  D^2u \leq \Lambda \text{ in } B_1(0).
	\end{equation*}
	There exists $\sigma=\sigma (H,M_1,  M_2, \lambda,  \Lambda, n)>0 $ such that if $u$ satisfies
	\begin{equation*}
		0 \leq u( x_1, x'',0 ) \leq  \sigma +  M_1|x''|^2   \text{ on } B_1'(0),
	\end{equation*}
	then
	\begin{equation*}
		\sup_{B_1(0) }u \geq H.
	\end{equation*}
\end{Lemma}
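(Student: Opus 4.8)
The plan is to deduce Lemma~\ref{strict convex lemma plane} from the half‑ball version, Lemma~\ref{strict convex lemma half}, by localizing near the equatorial slice $\{x_n=0\}$ and rescaling; no new compactness argument is needed. First I would dispose of the trivial case: if $\sup_{B_1(0)}u\ge H$ there is nothing to prove, so I may assume $\sup_{B_1(0)}u<H$ and aim for a contradiction once $\sigma$ has been fixed small in a universal way.

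The first step is a \emph{universal} gradient bound near the slice. Since $0$ lies on the slice, the hypothesis gives $0\le u(0)\le\sigma$, and midpoint convexity yields $u(x)\ge 2u(0)-u(-x)>-H$ on $B_1(0)$; hence $|u|<H$ on $B_1(0)$. Combined with $u\in C^\infty_{\mathrm{loc}}(B_1(0))$ (Lemma~\ref{c2a classical estimate}), the interior gradient estimate (Lemma~\ref{interior nabla estimate}, taking the whole $x$ as a single block) gives $\|\nabla u\|_{L^\infty(B_{1/4}(0))}\le C(n)H$. Because $u$ is convex on the \emph{full} ball, for $|x'|<\tfrac18$ and small $t>0$ one has $D_nu(x',t)\ge -C(n)H$, so $D_nu(x',0):=\varlimsup_{t\to0^+}D_nu(x',t)\ge -M_2$ with $M_2:=C(n)H$.

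Next I would rescale. Put $r=\tfrac18$ and $\tilde u(x):=r^{-2}u(rx)$ on $\overline{B_1^+(0)}$. Then $\tilde u$ is convex, $\lambda\le\det D^2\tilde u\le\Lambda$ in $B_1^+(0)$, and on the slice $B_1'(0)$ one has $0\le\tilde u(x_1,x'',0)\le r^{-2}\sigma+M_1|x''|^2$ together with $D_n\tilde u(x',0)\ge -M_2/r$. Applying Lemma~\ref{strict convex lemma half} to $\tilde u$ with constants $\widetilde H:=r^{-2}H$, the same $M_1$, and $\widetilde M_2:=M_2/r$ produces $\sigma_\ast=\sigma_\ast(\widetilde H,M_1,\widetilde M_2,\lambda,\Lambda,n)>0$ such that $r^{-2}\sigma\le\sigma_\ast$ forces $\sup_{B_1^+(0)}\tilde u\ge\widetilde H$, i.e. $\sup_{B_r^+(0)}u\ge H$, i.e. $\sup_{B_1(0)}u\ge H$. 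Since $r$ is absolute and $M_2=C(n)H$, the quantity $\sigma:=r^2\sigma_\ast$ depends only on $(H,M_1,\lambda,\Lambda,n)$, and with this $\sigma$ the conclusion $\sup_{B_1(0)}u\ge H$ contradicts the assumption $\sup_{B_1(0)}u<H$. This finishes the argument.

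The only delicate point, which I would emphasize, is arranging that the Neumann‑type constant $M_2$ handed to Lemma~\ref{strict convex lemma half} be \emph{universal}; this is exactly what the reduction to the case $\sup_{B_1(0)}u<H$ buys us, since then the interior gradient estimate controls $M_2$ by $H$. As an alternative one could instead rerun the compactness/blow‑up scheme of Lemma~\ref{strict convex lemma half} directly in the interior (with no boundary present, no hypothesis on $D_nu$ is needed); in that route the main obstacle is establishing the uniform, quadratically growing Lipschitz bounds that permit extracting a locally uniform limit $v$ on a slab $[-c,c]\times\R^{n-1}$ with $\lambda\le\det D^2v\le\Lambda$ and linear growth in $x_n$, after which a non‑negative convex function vanishing along a line produces a sublevel set of infinite volume, contradicting Lemma~\ref{volume bound by measure}.
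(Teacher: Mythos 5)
Your proposal is correct and follows essentially the same route as the paper: assume $\sup_{B_1(0)}u<H$ for contradiction, use full-ball convexity and the resulting interior Lipschitz bound to obtain a controlled lower bound on $D_nu(\cdot,0)$ along the slice, and then invoke Lemma~\ref{strict convex lemma half}. The only cosmetic difference is that you make the reduction by an explicit rescaling to $B_1^+(0)$, whereas the paper subtracts the supporting plane at $0$ and applies the half-ball lemma directly; both handle the $M_2$-dependence in the same universal way.
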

\begin{proof}
	Assume by way of contradiction that  $\sup_{B_1(0) }u < H$. Then,  the assumption of Lemma \ref{strict convex lemma plane} immediately implies that
 $u \in  Lip(B_c(0))$ and $|\nabla' u(0)| \leq C(\sigma+\sigma^{\frac{1}{2}})$( See Lemma  2.6). After subtracting the supporting function
 $u(0)+\nabla u(0) \cdot x$ at $0$, $u$ satisfies the assumption of Lemma \ref{strict convex lemma half} with $M_2=\sigma+\sigma^{\frac{1}{2}}$. Applying Lemma \ref{strict convex lemma half} we
 obtain a contradiction, completing the proof.
\end{proof}

We point that the constant $\sigma $ in the growth assumption of Lemma 4.1 and 4.2 can not be  allowed to be zero. Otherwise, the constant $H$ may tend to $\infty$, yielding a contradiction.
Moreover, the following example shows that the growth exponent $2$ is  optimal.

\begin{Example} %4.3
	Suppose $n \geq 3$,  constants $1<a ,b <\infty$ and $\delta >0$ is small such that
 $$\frac{1}{a}=\frac{1}{2}+\frac{1}{n-2}\delta,\; 1-\delta =\frac{1}{b} .$$
	  Let
	  \[E_1:=\{x' \in \R^{n-1}: \; |x''|^a \geq |x_1|^b\} ,\ \  E_2:=\{x' \in \R^{n-1}: \;|x_1|^b \geq |x''|^a \} . \]
	 Consider the function
	\begin{equation}\label{4.8}
		W_{a,b}(x')=\begin{cases}  |x''|^{a}+|x''|^{a-\frac{2a}{b}}|x_1|^2,    &x\in E_1 \\
			\frac{2b+a-ab}{b}|x_1|^{b}+\frac{ab-a}{b}|x_1|^{b-\frac{2b}{a}}|x''|^2,& x\in E_2
		\end{cases}
	\end{equation}
    and
    \begin{equation}\label{4.9}
    	W(x)= (1+x_n^2)W_{a,b}(x').
    \end{equation}
    We will show that  there is a $\rho=\rho (\delta)>0$ such that $ W$ is a convex function  in
    $R^{n-1}\times (-\rho, \rho) )$ satisfying
    \begin{equation}\label{4.10}
    	c(\delta)\leq   \operatorname{det} D^2W \leq C(\delta), \ D_nW(x',0)=0 \; \text{in}\; R^{n-1}\times (-\rho , \rho ).
    \end{equation}
	\end{Example}

    \begin{proof}
       Notice that $W \in C^1(\R^n) \cap C^2(\R^n\setminus \{x'=0\})$,  $\nabla W (\{x'=0\}) \subset \{x'=0\}$ and the   n-dim Hausdroff measure
       of the set  $\nabla W (\{x'=0\})  $  is zero.  The second equation in \eqref{4.10} is obvious.
     Hence, it is enough to  show that $W$ is convex  and satisfies the first inequality in  \eqref{4.10} in the sense of (Alexandrov) generalized solution.
      For this purpose, we need to calculate $\operatorname{det} D^2 W$ at any point $x'\neq 0$.

     It is easy to see that
    \begin{equation*}
       D^2 W=	\left[
    \begin{array}{cc}
    	(1+x_n^2)D'^2 W_{a,b}& 2x_nD' W_{a,b} \\
    2x_nD' W_{a,b} & 2 W_{a,b}
    \end{array}
    \right].
    \end{equation*}
    So the convexity of $W$ and $\det D^2W$ is reduced to  the study of  the matrix
    \begin{equation*}
        \A=	\left[
    	\begin{array}{cc}
    		(1+x_n^2)D'^2 W_{a,b}-2W_{a,b}^{-2}x_n^2D' W_{a,b} \otimes D' W_{a,b} & 0 \\
    		0 & 2 W_{a,b}
    	\end{array}
    	\right].
    \end{equation*}

 First, we assume $x'\in E_1$. Then
  $$ |x''|^a \leq  W_{a,b} \leq  2 |x''|^a ,\ \ \forall x'\in E_1 . $$
  Set the matrix functions
     $$\K_1'(x'')=\operatorname{diag}\{  |x''|^{a-\frac{2a}{b}}, |x''|^{a-2}\I''  \} , \ \  \K_1 (x'')=\operatorname{diag}\{\K_1', |x''|^{a}\}.$$
      Then by the assumption on $a$ and $b$ we have
    \begin{equation} \label{4.11}
     \operatorname{det} \K_1 =|x''|^{2a(\frac{n}{2}-\frac{1}{b}-\frac{n-2}{a})}\equiv  1.
    \end{equation}
    Observing that $C(a, b)$ can be written as $C(\delta)$, we  have
    \[0 \leq 2W_{a,b}^{-2}D' W_{a,b} \otimes D' W_{a,b} \leq C(\delta) \K_1' ,\ \  D'^2 W_{a,b} \leq C(\delta) \K_1',\ \ \forall x'\in E_1 . \]
     Hence,  in $E_1$ we have
    \begin{equation} \label{4.12}    C(\delta) \K_1 \geq \A \geq \left[
    \begin{array}{cc}
    	(1+x_n^2)D'^2 W_{a,b}-  x_n^2 C(\delta)\K_1'  & 0 \\
    	0 &   C(\delta)|x''|^{a}
    \end{array}
    \right].
     \end{equation}
     Since  $|x_n |\leq \rho$ will be small, by \eqref{4.11}-\eqref{4.12} we can conclude the convexity of $W$ and
     the  first inequality in \eqref{4.10} after we prove
    \begin{equation}\label{4.13}
    	D'^2 W_{a,b} \geq \epsilon(\delta) \K_1', \ \  \forall x'\in E_1 .
    \end{equation}
    Denote
    \[\B_1=\sqrt{\K_1'},\  X=|x''|^{-1}x'', \ y= |x_1||x''|^{-\frac{a}{b}} \]
    and
    \[ m=a[\I''+(a-2)X\otimes X]+ (a-\frac{2a}{b})[\I''+(a-\frac{2a}{b}-2)X\otimes X] y.\]
     By a direct calculation we have
    \begin{equation}\label{4.14}\B_1^{-1}D'^2 W_{a,b} \B_1^{-1} =  	\left[
    \begin{array}{cc}
    	2& 2  (a-\frac{2a}{b}) yX\\
    	2  (a-\frac{2a}{b}) yX^{T} &  m
    \end{array}
\right].
\end{equation}
	 Observe that
	\begin{equation}\label{4.15}  m-2(a-\frac{2a}{b})^2 |y|^2 X\otimes X= f_1 \I''-f_2 X\otimes X
\end{equation}
	where, $ f_1= 2a-\frac{2a}{b}$ and
	\[		f_2 =2(a-\frac{2a}{b})^2y^2-(a-\frac{2a}{b})(a-\frac{2a}{b}-2)y^2-a(a-2).\]
	  Notice that
	 \begin{align*}
	 	2(a-\frac{2a}{b})^2-(a-\frac{2a}{b})(a-\frac{2a}{b}-2) &=(a-\frac{2a}{b})(a-\frac{2a}{b}+2) \\
	 	&= a (a-\frac{2a}{b})(\frac{1}{n-2}-1)\delta \\
	 	& \geq 0.
	 \end{align*}
	Obviously, $|y|\leq 1$ in $E_1$.  Recall $\delta $ is small, $ a\geq \frac{2a}{b}-a \geq \frac{7}{4} >\frac{3}{2}$, thus  we get
	  \[ f_1-f_2 \geq (a-\frac{1}{2})^2-(\frac{2a}{b}-a-\frac{1}{2})^2 \geq c[a-(\frac{2a}{b}-a)]\geq c\delta a.\]
	 Since
	\[ \I''- X\otimes X \geq 0,\]
	we  obtain
	 \[ m-2(a-\frac{2a}{b})^2 |y|^2 X\otimes X \geq ca\delta \I''.\]
	 Replacing this in \eqref{4.15} and using \eqref{4.13}-\eqref{4.14}, we finally obtain \eqref{4.13}.
	
Second, we assume $x\in E_2$. Then we have
$$c(\delta)|x_1|^b \leq  W_{a,b} \leq C(\delta)|x_1|^b .$$
    Denote $$
    \K_2'=\operatorname{diag}\{ |x_1|^{b-2}, |x_1|^{b-\frac{2b}{a}} \I'' \} \; \text{ and}\;
    \K_2 =\operatorname{diag}\{\K_2', |x_1|^{b}\}.$$ Then
    \[ \operatorname{det} \K_2 =|x_1|^{2b(\frac{n}{2}-\frac{1}{b}-\frac{n-2}{a})}\equiv 1.\]
    Similar to  the first case  in $E_1$,  it is enough to show that
    \begin{equation}\label{4.16}
    	D'^2 W_{a,b} \geq \epsilon(\delta) \K_2'\ \ in E_2 .
    \end{equation}
    Denote
    \[\B_2=\sqrt{\K_2'},\ Y  = |x''|^{-1} x'',\ y\ =|x_1|^{-\frac{b}{a}}|x''|\]
    and $$ d=(2-a\delta)b(b-1)+a\delta (b-\frac{2b}{a})(b-\frac{2b}{a}-1)y^2.$$
   By a direct calculation we obtain
    \begin{equation}\label{4.17}
     \B_2^{-1}D'^2 W_{a,b} \B_2^{-1} =  	\left[
    \begin{array}{cc}
    	d  & 2 a\delta (b-\frac{2b}{a}) yY\\
    	2a\delta (b-\frac{2b}{a}) yY & 2 a\delta \I''
    \end{array}
    \right] .
    \end{equation}
    Write
    \[ d-2a\delta(b-\frac{2b}{a})^2 y^2 |Y|^2=g_1-g_2 y^2 , \]
    where
    \begin{align*}
    	g_2 &=a\delta (b-\frac{2b}{a})(b-\frac{2b}{a}-1)-2a\delta(b-\frac{2b}{a})^2\\
    	&=\delta b^2(2-a)(\frac{2}{a}+\frac{1}{b}-1) \\
    	& \geq 0
    \end{align*}
    Obviously, $|y|\leq 1$ in $E_1$.  If $\delta >0 $ is small,  we have
    \begin{align*}
    	d-2a\delta(b-\frac{2b}{a})^2 y^2 |Y|^2 & \geq b^2 \delta[(2-a\delta)+a\frac{2\delta }{n-2} (1-\frac{2}{a}-\frac{1}{b})- 2a(\frac{2\delta }{n-2})^2  ] \\
    	& \geq cb^2 \delta,
    \end{align*}
   which, together with \eqref{4.17}, implies the desired \eqref{4.16}.
\end{proof}

\begin{Remark}  %4.4
		Suppose that $R>0$, $W$ is given by \eqref{4.8}-\eqref{4.9}, and consider the Dirichlet problem
	\begin{equation}\label{4.18}
		\begin{cases}
			\operatorname{det} D^2 W_R^+ =c(\delta)  & \text{ in } B_{R}'(0) \times [-\rho, \rho]\\
			W_R^+ =W  & \text{ on } \partial (B_{R}'(0) \times [-\rho, \rho]).
		\end{cases}.
	\end{equation}
It is known that there exists a convex solution $W_R^+ $,  which is still symmetric about $x_n=0$ since so is $W$. This is to say
\[D_nW_R^+(x',0)=0.\]
	Comparison principle (Lemma \ref{comparison principle}) implies
	\[W_R^+ \geq W \geq 0\]
	 if $c(\delta)>0$ is small.  The convexity means
	\[
	W_R^+(x',t) \leq \max \{W_R^+(x',\rho),   W_R^+(x',-\rho)\} \leq 4W(x',0),\ \ x\in B_{R}'(0), t\in   [-\rho, \rho].
	\]
	and it follows that \[ W_R^+(te_n)  =0. \]
	This mean $W_R^+$ has the same asymptotic behavior around $0$ as $W$.
	
	%Moreover, if we consider the limit function $W_{\infty}^+=\lim_{R\to \infty} W_R^+$. We get a $C^1$ convex function defined on the strip domain $R^{n-1} \times [-\rho,\rho]$ such that
	%\[ \operatorname{det} D^2 W_{\infty}^+ =c(\delta), \ D_nW_{\infty}^+(x',0)=0, \]
	%and $W_{\infty}^+$ has the same asymptotic behavior around $0$ as $W$.
\end{Remark}

%\begin{Remark} %4.5
%Suppose that
%\[  i(\frac{1}{2}-a)+j(\frac{1}{2}-b)+k(\frac{1}{2}-c)=0,\ 1<a,b,c <\infty.\]
%The reader could construct singular solutions with the following asymptotic behavior around $0$
%\[ W_{a,b,c}(x,y,z) \sim |x|^a+|y|^b+|z|^c, \ x \in \R^i , \ y \in \R^j , \ z \in \R^k.\]
%The author does not know whether similar results apply to more than $4$ variables.
%\end{Remark}

\section{Good Shape Lemma} %5

In this section, we assume that $u$ is a viscosity solution to problem \eqref{liouville problem}, satisfying \eqref{u=0 nabla u =0} and  the hypothesis of Theorem \ref{liouville theorem}.
We are going to prove that its height section $S_h:=S_h^u(0)$ (defined by \eqref{2.5}) is of good shape for  large  $h$,
then we will introduce the normalization family $(u_h, S_1^h)$  of $(u, S_h)$.

At first, we mention here that the good shape property is invariant under any linear transform of the form $\A=\operatorname{diag}\{\A',a_n\}$
with $c\leq a_n, \; \operatorname{det} \A' \leq C$.

Let $p_h$ be  the mass center of $S_h $,  and consider the {\sl sliding transformation}
\[\A_h^c x=x-  \sum_{i=1}^{n-1}\frac{p_h \cdot e_i}{p_h\cdot e_n} x_n  e_i.  \]
Take $y_h \in \overline{\P (\A_h^cS_h )}$ such that
  \[ |y_h \cdot e_1|=\sup\{ x\cdot e_1 |\, x\in \P (\A_h^cS_h )\}\]
and  consider another sliding transformation
\[\A_h x=x-  \sum_{i=2}^{n-1}\frac{y_h \cdot e_i}{y_h\cdot e_1} x_1  e_i . \]
 Apply John's lemma (Lemma \ref{Jhon Lemma}) to $\{x'' \in R^{n-2} |\, (x_1,x'', 0) \in \A_h \P (\A_h^cS_h )\}$,
  we can find a new coordinates $\{e_i\}_{i=1}^n$ which keeps $e_1$ and $e_n$ (or only $e_n$ for $\A_h^c $) invariant such that
\[		\D_h B_{1}(q_h) \subset \A_h  \A_h^c  S_{h}    \subset  \D_h B_{C (n)}(q_h) \subset \D_h B_{2C (n)}^+(0),\]
where $q_h= \A_h \A_h^c  p_h$, $\D_h :=\operatorname{diag}(d_{1}(h), \cdots, d_n(h))$ is the diagonal matrix satisfying
  $$ \Pi_{i=1}^n d_{i}(h)=\operatorname{det}\D_h=Vol(S_h).$$

It follows from Lemma \ref{volume bound by measure}  that $\operatorname{Vol} (S_{h}) \leq  Ch^{\frac{n}{2}}$.  Denote
$$\tau_h= \frac{(\operatorname{det}\D_{h})^{\frac{1}{n}}}{h^{\frac{1}{2}}}, \; \bar{\D_h}=\tau_h^{-1} \D_{h}:=
\operatorname{diag}(\bar{d}_{1}(h), \cdots, \bar{d}_n(h)).$$

We  first consider  the rescaling   $(u_h, S_1^h )$ of $(u, S_h)$ given by
\begin{equation} \label{normalization tau definition}
	{u}_{h}(x)= \frac{u( {\A}_{h} ^{-1} \bar{\D_h} x)}{h}, \ \ x \in S_1^h :=  \bar{\D_h}^{-1}{\A}_{h} S_{h},
\end{equation}
then we have
\begin{equation}  \label{nomailzaition slide en}
	{\A}_{h}^*   B_{\tau_h }(x_h) \subset S_1^h \subset  {\A}_{h}^*  B_{C (n)\tau_h }(x_h) \subset \{ x_n \leq 2\tau_n C(n)\}
\end{equation}
for some $x_h$, where
\[   A_h^*=  \bar{\D_h}^{-1}  \A_h \A_h^c \A_h^{-1}\bar{\D_h}  \]
is still a sliding transformation of the form $ A_h^*x=x-\sum_{i=1}^{n-1}\beta_h^i x_ne_i$.

We  use the notation $G_1^{h}  :=  S_1^h   \cap  \partial \R_+^{n}$ similar to \eqref{boundary innitial def}.  And we will simplify ${S}_t^{h}, {G}_t^{h}$ by ${S}_t,{G}_t$
when   the dependence on $h$ and ${u}_h$
is self-evident from the context.
In Section 7, we will use the corresponding notations $S_t^R, G_t^R$ for the $(u_R, S_1^R)$ as we just defined  $S_1^h ,G_1^h$ for $(u, S_1)$.

 Now, the Neumann problem \eqref{liouville problem}  becomes
\begin{equation} \label{nomailzaition tau equation}
	\begin{cases}
		\operatorname{det}D^2 {u}_h=1     &  \text{ in } {S}_1^h\\
		D_{n} {u}_h =a_h x_1               &  \text{ on }  {G}_1^h \\
		{u}_{h}=1                   &  \text{ on }  \partial {S}_1^h \setminus {G}_1^h
	\end{cases}
\end{equation}
where
\begin{equation} \label{nomailzaition tau neumann equation}
	a_h =\frac{\bar{d}_1(h)\bar{d}_n(h)}{h}a.
\end{equation}

Recall that if $n\geq 3, $  $u\in \Theta $ and \eqref{growth innitial assump} is satisfied. Let
\begin{equation*}
	\delta_h=\frac{K}{h}, \ \bar{\Q}_h=\frac{C\bar{\D_h}''^TI'' \bar{\D_h}''}{h}.
\end{equation*}
We have
\begin{equation} \label{nomailzaition tau growth equation}
	u_h(0,z,0) \leq  z^T \bar \Q_h z +\delta_h, \; \forall z\in R^{n-2}
\end{equation}
and
\begin{equation} \label{nomailzaition tau aQ equation}
	a_h^2 \operatorname{det}\Q_h = C^n \tau_h ^{-2n} \frac{(\operatorname{det}\D_{h})^2}{h^{n} }a^2 \leq  C .
\end{equation}
If $h \geq 2K$, \eqref{growth innitial assump} also implies   $d_i(h) \geq ch^{\frac{1}{2}}$ for $2 \leq i \leq n-1$. More precisely,
\[ \Pi_{i=2}^{n-1}\bar{d}_i(h) \geq C\tau_h^{-\frac{n-2}{2}}\operatorname{Vol}_{n-2}(S_h(0) \cap \R^{n-2} )\geq Ch^{\frac{n-2}{2}}.\]
Therefore $\operatorname{det}\Q_h \geq c  $ and $0 \leq a_h \leq C$. To conclude,  we have obtained that
 \begin{equation} \label{nomailzaition tau general assump}
 0 \leq a_h \leq C, \ \  u_h\in \Theta (\delta_h), \ \   \forall h \geq 2K .
\end{equation}

 The Good Shape Lemma is stated as follows.
\begin{Theorem}[Good Shape] \label{good shape theorem}
	For large $h$, the normalization $(u_h,S_1^h)$ given by \eqref{normalization tau definition} satisfies
	\begin{align}
		\label{good shape cover}	&  \  \P S_1^h(0) \subset   C\P  G_1^h(0) \text{ and } S_1^h(0)
		\subset   B_{C (n)\tau_h }(0),\\
		\label{good shape volume}	&  \  c \leq \operatorname{Vol}(S_1^h(0)) \leq C \;  \text{ and }  \; c\leq \tau_h \leq C ,	 \\
		\label{good shape symmetric}	&  \   \P  G_1^h(0) \subset -C\P  G_1^h(0).
	\end{align}
\end{Theorem}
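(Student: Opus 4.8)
The goal is to show that the normalized sections $(u_h, S_1^h)$ constructed by the sliding/John normalization have uniformly bounded geometry: the projection of the section is comparably covered by the Neumann boundary, the section is contained in a ball of radius $\sim\tau_h$, the volume and the scaling factor $\tau_h$ are both pinched between two universal constants, and the boundary trace is symmetric up to a universal constant. The strategy is to run a compactness-and-contradiction argument, using the strict convexity Lemma \ref{strict convex lemma half} (the ``Strict Convexity Lemma'') together with the barrier estimates of Section 3 as the quantitative input that prevents degeneration.

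\textbf{Step 1: Lower bound for $\tau_h$ and upper bound on the eccentricity along $e_n$.}
First I would show that $\tau_h$ cannot be too small. Suppose $\tau_{h_k}\to 0$ along a subsequence. After the sliding transformations $\A_{h_k}^*$ and the John normalization, $S_1^{h_k}$ contains a ball of radius $c\tau_{h_k}\to 0$ but is contained in $B_{C(n)}$; since the slidings preserve $e_n$ and the diagonal rescaling has determinant $h^{-n/2}\mathrm{Vol}(S_h)$, the degeneration must be concentrated in the $e_n$-direction or in some $e_i''$-direction. Passing to a limit of $u_{h_k}$ on the (possibly unbounded in some directions) limiting convex domain, with $\mathrm{det}\,D^2 u =1$ and the normalized Neumann data $D_n u_{h_k}=a_{h_k}x_1$ with $0\le a_{h_k}\le C$ (from \eqref{nomailzaition tau general assump}), one obtains a convex solution of the Monge--Amp\`ere equation on a slab-like domain whose graph contains a line — contradicting Lemma \ref{volume bound by measure} (infinite volume under a positive lower bound on the Monge--Amp\`ere measure), exactly as in the proofs of Lemma \ref{strict convex in interior} and Lemma \ref{strict convex lemma half}. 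The key point that the limit is genuinely unbounded in the right direction is where the normalized growth condition $u_h\in\Theta(\delta_h)$ with $\delta_h\to 0$ is used: it forces the $e_i''$ directions to scale like $h^{1/2}$ (the estimate $\Pi_{i=2}^{n-1}\bar d_i(h)\ge C h^{(n-2)/2}$ already recorded above), so no collapse can happen there, and any collapse in $e_n$ or $e_1$ produces the forbidden line. This simultaneously gives $\tau_h\ge c$ and, with Lemma \ref{volume bound by measure} giving $\tau_h\le C$ for free, the second half of \eqref{good shape volume}; the volume bound \eqref{good shape volume} for $S_1^h$ then follows since $\mathrm{Vol}(S_1^h)=\tau_h^{-n}\mathrm{Vol}(\bar\D_h^{-1}\A_h S_h)=\tau_h^{-n}h^{-n/2}\mathrm{Vol}(S_h)\cdot(\text{const})$ combined with \eqref{2.12} and a matching lower bound from the inclusion of a ball of radius $c\tau_h$.

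\textbf{Step 2: The boundary covering \eqref{good shape cover} and symmetry \eqref{good shape symmetric}.}
Once $\tau_h$ is under control, $S_1^h$ is a normalized convex set with $B_c(x_h)\subset S_1^h\subset B_C$ up to a bounded sliding. It remains to see that the flat face $G_1^h=S_1^h\cap\partial\R_+^n$ is not degenerate: its projection must comparably cover $\P S_1^h$, and must be roughly symmetric about the origin. Here I would argue again by contradiction and compactness. If $\P G_1^{h_k}$ were much smaller than $\P S_1^{h_k}$ (so $S_1^{h_k}$ detaches from the boundary, or touches it in a thin set), then in the limit one gets a convex solution $u$ of $\mathrm{det}\,D^2u=1$ on a normalized convex domain with $u\equiv 1$ on most of the boundary and flat Neumann data $D_n u = a_\infty x_1$ on a lower-dimensional or very thin piece; but the normal-derivative barriers of Section 3 — the upper barrier $w^{0,+}$ of Lemma \ref{sup solution to lip} and, after checking the hypothesis \eqref{low bar assump}, the lower barrier $w^{0,-}$ of Lemma \ref{sub solution by low bar} — force $u(te_n)\sim t^2$ and hence force a definite portion of each section near the boundary to actually reach $\partial\R_+^n$, contradicting the detachment. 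The symmetry \eqref{good shape symmetric} is obtained the same way: the choice of $p_h$ as the mass center and $y_h$ as an extreme point in the $e_1$-direction makes $\A_h^*$ a sliding that centers $S_1^h$, and the John inclusion plus the balance estimate Lemma \ref{balance kappa lem} (relating $\mathrm{Vol}(S_1^h)$, $\mathrm{Vol}_{n-1}(\P S_1^h)$ and the $e_n$-width through the concave ``height'' function) upgrades ``$\P G_1^h$ covers a definite fraction of $\P S_1^h$'' to ``$\P G_1^h$ is balanced about $0$ up to a universal constant'', which is \eqref{good shape symmetric}; the same balance estimate run in reverse closes \eqref{good shape cover}.

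\textbf{The main obstacle.}
The genuinely delicate point is Step 2: ruling out the section \emph{detaching} from the Neumann boundary, i.e. showing $G_1^h\neq\emptyset$ with comparable projection. One has to verify that the hypotheses of the lower barrier Lemma \ref{sub solution by low bar} (the condition \eqref{low bar assump}, $v(x)\ge\sigma_0(x_n)-\bar\sigma$) actually hold for the normalized solutions with a \emph{universal} $\bar\sigma$ — this uses the linearized problem \eqref{linearized equation} for $v=D_nu_h-D_nu_h(x',0)$, the Harnack-type behavior of nonnegative solutions of $U^{ij}D_{ij}v=0$ vanishing on $G_1^h$, and the fact that $v$ does not vanish identically (which is where $a_h\ge 0$ and the quadratic growth again enter). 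Getting this uniformly in $h$, so that the barriers of Section 3 apply with universal constants and close the compactness argument, is the crux; everything else is John's lemma, the comparison principle, and bookkeeping about the sliding transformations.
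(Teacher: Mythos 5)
Your proposal has two genuine gaps, and in both places the paper's actual mechanism is different from (and more elementary than) what you describe.

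First, Step 1. The claim that $\tau_{h}\to 0$ can be ruled out by passing to a limit ``whose graph contains a line'' does not match the geometry of the normalization. By \eqref{nomailzaition slide en}, $S_1^h$ is comparable, after a sliding, to a ball of radius $\tau_h$ in \emph{every} direction; so $\tau_{h_k}\to 0$ means the normalized section collapses isotropically while the oscillation of $u_{h_k}$ over it stays equal to $1$. Equivalently, it means $\operatorname{Vol}(S_{h})\ll h^{n/2}$, a volume deficiency, not an unbounded or slab-like limit domain. The ``graph contains a line'' mechanism (Lemma \ref{volume bound by measure} applied to a bounded function on a domain of infinite volume) is the right tool for strict convexity and for boundedness of sections, but it says nothing here: after rescaling the domain to unit size the Dirichlet datum blows up like $\tau_{h_k}^{-2}$ and no contradiction falls out of compactness alone. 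The lower bound $\tau_h\ge c$ genuinely requires a quantitative use of the Neumann condition: the paper compares $u_h$ with $\upsilon(x)=\tau_h^{-1}\bigl(|x|^2/(8C_n^2)+x_n/(2C_n)\bigr)$, whose normal derivative $\sim\tau_h^{-1}$ on $G_1^h$ beats the bounded datum $D_nu_h=a_hx_1$, whose Monge--Amp\`ere measure $\sim\tau_h^{-n}$ beats $1$, and which is below $u_h=1$ on the Dirichlet part; the mixed comparison principle (Lemma \ref{comparison principle for mixed problem}) then contradicts $u_h(0)=0$. Your proposal never produces this (or any equivalent) quantitative step. Note also that the inclusion $S_1^h\subset B_{C(n)\tau_h}$ and the covering $\P S_1^h\subset C\,\P G_1^h$ are needed \emph{before} this barrier argument, to verify $\upsilon<u_h$ on $\partial S_1^h\setminus G_1^h$, so the order of your steps is also off.

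Second, Step 2. The covering \eqref{good shape cover} in the paper is a two-line convexity argument: if a direction $e$ has $\sup\{s: se\in\P S_1^h\}\ge M\sup\{s: se\in\P G_1^h\}$, one evaluates $u_h$ at the intersection $p$ of the segment $[0,y]$ with the vertical ray from $z\in\partial G_1^h$, getting $u_h(p)\le 1/M$ from convexity along $[0,y]$ and $u_h(p)\ge 1-C|a_h|/M$ from \eqref{en growth neumann}; hence $M\le 1+C|a_h|$. No compactness and no Section~3 barrier is involved, and this is where the precise form $D_nu_h=a_hx_1$ with $|z|\le C$ enters. For the balance \eqref{good shape symmetric}, the paper again uses an explicit barrier $Q(Mx_1)+\theta\sum_iQ(x_i/C)$ on the truncated domain $S_1^h\cap\{x_1>-1/M\}$, exploiting only $D_{-x}u_h\le 0$ (convexity plus $u_h\ge 0$) and the smallness of $D_nu_h$ near $\partial\P G_1^h$. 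Your route instead invokes the barriers of Lemmas \ref{sup solution to lip} and \ref{sub solution by low bar}, but their hypotheses ($B_c^+\subset S_1\subset B_C^+$, $\|D_nu\|_{L^\infty}\le C$, and crucially \eqref{low bar assump} with a universal $\bar\sigma$) are either part of what Theorem \ref{good shape theorem} is supposed to establish or are, as you yourself note, left unverified. That circularity is not bookkeeping; it is the whole difficulty, and the paper's proof is structured precisely to avoid it by never using the linearized problem or a Harnack inequality in Section~5.
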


We are going to prove the theorem  by the  coming  three lemmas.
\begin{Lemma} \label{gs cover lemma}
The normalization $(u_h,S_1^h)$ given by \eqref{normalization tau definition} satisfies \eqref{good shape cover} for large $h$. More precisely,
\begin{equation*}
 \P S_1^h(0) \subset (1+C|a_h|) \P G_1^h(0) \subset C\P G_1^h(0).
\end{equation*}
\end{Lemma}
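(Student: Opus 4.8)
The goal is to show that the projection $\P S_1^h(0)$ of the normalized section onto $\partial\R_+^n$ is comparable to (indeed, covered by a bounded dilate of) the Neumann boundary projection $\P G_1^h(0)$, provided $h$ is large. The plan is to exploit the Neumann condition $D_n u_h = a_h x_1$ on $G_1^h$ together with the growth estimate \eqref{en sup quadratic}/\eqref{en sup quadratic}-type control on the normal derivative coming from Section 3, and the fact (from \eqref{nomailzaition tau general assump}) that $a_h$ is universally bounded. Concretely, take any point $z=(z',z_n)\in S_1^h$. Because $u_h$ is convex and equals $1$ on $\partial S_1^h\setminus G_1^h$ while the graph sits below its boundary values, the segment in the $e_n$-direction through $z$ hits $\partial\R_+^n$ at a point $(z',0)$; I want to show $(z',0)\in (1+C|a_h|)\,\P G_1^h$. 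The mechanism is that the slope of $u_h$ in the $e_n$-direction at the boundary point $(z',0)$ is exactly $a_h (z')_1$, which is controlled by $C|z'|$, so the function $u_h$ cannot grow from its boundary value fast enough to leave the section until one has travelled a definite fraction of the way; rescaling this statement along the $x_1$-ray gives the claimed dilation factor $1+C|a_h|$.

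The key steps, in order, are: (i) Use the normalization \eqref{nomailzaition slide en}: after the sliding transformation $A_h^*$, $S_1^h$ contains a ball $B_{\tau_h}(x_h)$ and is contained in $B_{C(n)\tau_h}(x_h)\subset\{x_n\le 2C(n)\tau_h\}$; combined with \eqref{good shape volume} (to be proved independently in the companion lemmas, but usable here since the statement of Lemma 5.5 is part of Theorem 5.4 which is proved by these three lemmas — actually I should only use what precedes, so I use the John-normalization bounds on shape directly, not $c\le\tau_h\le C$). (ii) Observe that for $(y',0)\in G_1^h$ the Neumann data $D_n u_h(y',0)=a_h y_1'$ together with convexity gives the lower barrier $u_h(y',t)\ge u_h(y',0)+ a_h y_1' t$ along the normal ray (this is the rescaled version of \eqref{en growth neumann} in Lemma 3.2). (iii) For a boundary point $(z',0)$ that is NOT in $\P G_1^h$, the vertical ray from it exits the section immediately, i.e. $u_h(z',0)\ge 1$; now slide/scale: the point $\frac{1}{1+C|a_h|}z'$ should lie in $\P G_1^h$, and I prove this by a one-dimensional convexity comparison along the segment joining $0$ (where $u_h(0)$ is controlled, being near $0$ after the normalization \eqref{u=0 nabla u =0} scaled) to $z'$, using that $u_h$ restricted to $\partial\R_+^n$ is convex with gradient controlled by the boundary estimate Lemma 2.5 / the quadratic growth bound coming from $u_h\in\Theta(\delta_h)$. (iv) Conclude $\P S_1^h\subset(1+C|a_h|)\P G_1^h$; since $0\le a_h\le C$ by \eqref{nomailzaition tau general assump}, the second inclusion $\subset C\P G_1^h$ follows, using that $\P G_1^h$ is a convex set containing a neighborhood of a point so that dilation about that point absorbs the constant.

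The main obstacle I anticipate is step (iii): controlling how far into the section the graph of $u_h$ can stay below height $1$ when starting from a boundary point $(y',0)$ where the Neumann slope $a_h y_1'$ is large and positive (pushing $u_h$ up quickly) versus where it is negative. One has to rule out that $\P G_1^h$ is a thin sliver while $\P S_1^h$ is much fatter in the $x_1$-direction — this is exactly where the factor $(1+C|a_h|)$ must come from and where the sign of $a_h$ (normalized to $a_h\ge 0$ via \eqref{u=0 nabla u =0}) matters. The argument must quantify, via the barrier functions $w^{y,\pm}$ of Section 3 (specifically Lemma 3.5 / Corollary 3.7 applied to $u_h$) together with the volume/shape bounds, that moving from a boundary point by a unit amount in $e_n$ changes $u_h$ by at most $O(1+|a_h|\,|y_1'|)$, hence the vertical ``thickness'' of $S_1^h$ over any boundary point $y'\in\P G_1^h$ is at least a definite amount, forcing $\P S_1^h$ to be trapped in a fixed dilate of $\P G_1^h$. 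The estimate $\|D_n u_h\|_{L^\infty(S_1^h)}\le C$ needed to invoke Lemma 3.5 is itself something that should be available from the normalization together with Lemma 2.5, and I would establish it as a preliminary step before running the barrier comparison.
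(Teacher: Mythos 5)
You have assembled the right ingredients --- the integrated Neumann lower bound $u_h(z+te_n)\ge u_h(z)+t\,a_hz_1$ from Lemma \ref{viscosity neumann equiv}, convexity from the origin, and the universal bound on $a_h$ --- and you correctly identify the crux: ruling out that $\P G_1^h$ is a thin sliver while $\P S_1^h$ is much fatter. But your proposed resolution of that crux does not close. Showing that the section has definite vertical thickness over every point of $\P G_1^h$ (your final paragraph) does not prevent $S_1^h$ from bulging far beyond $G_1^h$ at positive heights $x_n>0$, which is exactly the scenario to be excluded; the containment $\P S_1^h\subset(1+C|a_h|)\P G_1^h$ is a statement about points \emph{outside} $\P G_1^h$, and no thickness estimate over $\P G_1^h$ reaches them. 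Moreover, invoking Lemma \ref{sup solution to lip} / Corollary \ref{lip unversal by sup} here is circular: their hypotheses ($B_c^+\subset S_1\subset B_C^+$ and $\|D_nu\|_{L^\infty(S_1)}\le C$) are precisely what Theorem \ref{good shape theorem} is in the process of establishing, and at this stage one only has the John normalization \eqref{nomailzaition slide en} with $\tau_h$ not yet bounded below. Your step (iii) is also off target in detail: the comparison is not along the boundary segment from $0$ to $z'$ (the quadratic growth is only in the $x''$ directions and gives no control in $e_1$, which is where the $a_h$-dependence lives), and a point $(z',z_n)\in S_1^h$ with $z_n>0$ can perfectly well have $(z',0)\notin S_1^h$, so ``the vertical ray exits immediately'' is not the right picture.

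The paper's argument is a short two-sided estimate at a single interior point, using nothing from Section 3 beyond \eqref{en growth neumann}. Fix a direction $e\perp e_n$ in which the ratio of the extent of $\P S_1^h$ to that of $\P G_1^h$ equals $M>1$; let $y=y_ee+y_ne_n\in\partial S_1^h$ realize the first extent and $z=z_ee\in\partial G_1^h$ the second, so $u_h(y)=u_h(z)=1$ and $y_e\ge Mz_e$. The chord from $0$ to $y$ meets the vertical ray $\{z+te_n\}$ at a point $p=sy$ with $s,t\le 1/M$. Convexity along the chord gives $u_h(p)\le su_h(y)+(1-s)u_h(0)\le 1/M$, while \eqref{en growth neumann} at $z$ gives $u_h(p)\ge u_h(z)+ty_na_hz_1\ge 1-C|a_h|/M$ since $|z|\le C$. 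Combining the two forces $M\le 1+C|a_h|$. This is the step your proposal is missing: the upper bound at $p$ from the chord through the origin played against the lower bound at $p$ from the Neumann condition at the \emph{edge} of $G_1^h$.
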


\begin{proof}

If there exists an unit direction $e, e\bot e_n$ such that
\[
\frac{\sup \{s |\, se\in  \P S_1^h(0)\}}{\sup \{s |\, te\in  \P G_1^h(0), \  \forall t\in (0,s)\}}  \geq  M  >1.
\]

We assume that the maximum values are attained    at points $y$ and $z$ respectively:
$$ y=y_ee+y_ne_n\in \partial \overline{S_1^h(0)}, \ \   z=z_ee \in \partial \overline{G_1^h(0)}.$$
  It is clear that
  \begin{equation} \label{5.11}
  	  u(z)=u(y)=1, \ \   y_e \geq Mz_e, \ \  0< y_n \leq C,
  \end{equation} where we have used \eqref{nomailzaition slide en} for the last inequality.

 Since the transforms   $  \A_h^*$ leaves $\R^{n-1}$ invariant,   we have $|z| \leq C$.  Let
%The triangle domian $\Delta_{yz\Q}$ with vertex $y,z,0$ and base $S_1^h(0) \cap \R^{n-2} $ is contained in $S_1^h(0)$,  \eqref{nomailzaition slide en} implies
%\[  y_n z\cdot e_1 \operatorname{Vol}_{n-2}(S_1^h(0) \cap \R^{n-2} )\leq \operatorname{Vol} S_1^h .\]
%Insert \eqref{nomailzaition tau general assump}, we get
%\begin{equation}
%a_h y_nz\cdot e_1 \leq  Ca_h \tau_h^2 \leq Ca_h.
%\end{equation}
\[l_{y,0}=\{sy,\  s\in (0,1)\} ,\]
\[ l_{z}^{e_n}=\{z+t e_n, \ t\in [0,\infty)\} .\]
There exists a point $p= sy=z+ty_ne_n \in l_{y,0} \cap l_{z}^{e_n}  $ with $s,t\in (0,\frac{1}{M}]$. By convexity, \eqref{u=0 nabla u =0} and \eqref{5.11} we have
\[u(p) \leq s  u(y) +(1-s)u(0)  \leq  \frac{1}{M} .\]
However, Lemma \ref{viscosity neumann equiv}, \eqref{5.11}, the convexity again,  and the fact $|z| \leq C$    implies that
\[u(p) \geq u(z)+t  y_n D_{n}u(z) \geq 1+ta_hy_nz\cdot e_1 \geq 1- C t|a_h|y_n  \geq 1-\frac{C|a_h|}{M}.\]
These two inequalities mean that $M \leq 1+C|a_h| \leq C$ by \eqref{nomailzaition tau general assump},
which implies $S_1^h(0)
		\subset   B_{C (n)\tau_h }(0)$.
 Combining  this  and  (5.2) we have
\[S_1^h \subset  \{x_n \leq C\tau_h\} \cap\{ CG_1^h \times [0,\infty]\} \subset B_{C\tau_h }(0).\]
\end{proof}

\begin{Lemma} \label{gs volume lemma}
The normalization $(u_h,S_1^h)$  given by \eqref{normalization tau definition} satisfies \eqref{good shape volume} for large $h$.
\end{Lemma}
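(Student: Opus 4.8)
The plan is to reduce \eqref{good shape volume} to the single estimate $c\le\operatorname{Vol}(S_1^h)\le C$, using the identity $\operatorname{Vol}(S_1^h)=\tau_h^n$. Indeed $\det\A_h=1$ since $\A_h$ is a sliding transformation, while $\det\bar{\D_h}=\tau_h^{-n}\det\D_h=\tau_h^{-n}\operatorname{Vol}(S_h)$, so $\operatorname{Vol}(S_1^h)=\operatorname{Vol}(\bar{\D_h}^{-1}\A_h S_h)=\tau_h^n$; in particular $\tau_h=\operatorname{Vol}(S_1^h)^{1/n}$ and the two halves of \eqref{good shape volume} are equivalent. The upper bound is then immediate from Lemma \ref{volume bound by measure}, which gives $\operatorname{Vol}(S_h)\le Ch^{n/2}$, hence $\tau_h^n=\operatorname{Vol}(S_h)/h^{n/2}\le C$.

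For the lower bound I would run a Monge--Amp\`ere subdifferential count (for $h$ large, so that Lemma \ref{gs cover lemma} and \eqref{nomailzaition tau general assump} are available). Write $\Omega:=S_1^h\cap\R_+^n$; by Lemma \ref{strict convex in interior}, carried from $u$ to its affine rescaling $u_h$, the function $u_h$ is smooth and strictly convex on $\Omega$ with $\det D^2u_h=1$, so $\nabla u_h$ is a diffeomorphism of $\Omega$ onto its image with Jacobian $1$, whence $|\nabla u_h(\Omega)|=\operatorname{Vol}(\Omega)=\operatorname{Vol}(S_1^h)=\tau_h^n$. On the other hand I claim that every $p\in\R^n$ with $p_n>C\tau_h$ and $|p|<\tfrac{1}{C\tau_h}$ lies in $\nabla u_h(\Omega)$, where $C$ absorbs the constant of the Cover Lemma \ref{gs cover lemma} and the bound $0\le a_h\le C$ from \eqref{nomailzaition tau general assump}. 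To see this, minimize the affine function $g(x):=u_h(x)-p\cdot x$ over the compact convex set $\overline{S_1^h}$ and examine a minimizer $z^\ast$. Using $u_h(0)=0$ and $\nabla u_h(0)=0$ from \eqref{u=0 nabla u =0} and \eqref{normalization tau definition}, we have $g(0)=0$. By Lemma \ref{gs cover lemma}, $\overline{S_1^h}\subset B_{C\tau_h}(0)$, so on the curved part $\partial S_1^h\cap\R_+^n$ and on the rim of $\overline{G_1^h}$, where $u_h\equiv 1$, we have $g\ge 1-C\tau_h|p|>0=g(0)$ once $|p|<\tfrac{1}{C\tau_h}$; hence $z^\ast$ does not lie there. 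If $z^\ast=(z',0)$ belonged to the relatively open flat part $G_1^h$ (where $u_h<1$), then $u_h-\varphi$ with $\varphi(x):=p\cdot x$ (affine, hence an admissible $C^2$ convex test function) would attain a local minimum at $z^\ast$ relative to $\overline{\R_+^n}$, and the viscosity supersolution property in \eqref{nomailzaition tau equation} would force $p_n=D_n\varphi(z^\ast)\le a_h z_1\le C\tau_h$, contradicting $p_n>C\tau_h$. So $z^\ast\in\Omega$, and extrapolating convexity along the rays from $z^\ast$ (using that $g\ge g(z^\ast)$ on $\overline{S_1^h}$) shows that $x\mapsto u_h(z^\ast)+p\cdot(x-z^\ast)$ supports $u_h$ on all of $\overline{\R_+^n}$, i.e.\ $p=\nabla u_h(z^\ast)\in\nabla u_h(\Omega)$. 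Since for $\tau_h$ small the admissible set of such $p$ contains a half-ball of radius $\sim\tau_h^{-1}$, it has measure $\ge c\tau_h^{-n}$; therefore $\tau_h^n=|\nabla u_h(\Omega)|\ge c\tau_h^{-n}$, i.e.\ $\tau_h^{2n}\ge c$, so $\tau_h$ is bounded below by a positive constant (the complementary regime, $\tau_h$ above a fixed threshold, being trivially a lower bound), and hence $\operatorname{Vol}(S_1^h)=\tau_h^n\ge c$.

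The step I expect to be delicate is exactly the boundary bookkeeping in this count: one must split $\partial S_1^h$ cleanly into the curved part (where $u_h\equiv 1$) and the flat closed set $\overline{G_1^h}$ (which contains the origin, where $u_h=0$ and $\nabla u_h=0$), check that $\overline{S_1^h}$ is a relative neighbourhood in $\overline{\R_+^n}$ of every point of $G_1^h$ so that the viscosity Neumann condition for $u_h$ genuinely applies there, and recall that this condition is available because the normalizing map $\A_h^{-1}\bar{\D_h}$ preserves $\overline{\R_+^n}$, so that $u_h$ indeed solves \eqref{nomailzaition tau equation} in the viscosity sense. One should also keep track of where the smallness of $\tau_h$ is used --- so that the set $\{p_n>C\tau_h,\ |p|<\tfrac{1}{C\tau_h}\}$ is non-empty and carries the claimed measure --- the complementary regime being immediate.
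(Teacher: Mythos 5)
Your argument is correct, and for the substantive half of the lemma it follows a genuinely different route from the paper. The upper bound is handled identically in both proofs: $\operatorname{Vol}(S_1^h)=\tau_h^n$ and Lemma \ref{volume bound by measure} give $\tau_h\leq C$. For the lower bound $\tau_h\geq c$, the paper does not count gradients; it constructs the explicit barrier $\upsilon(x)=\tau_h^{-1}\bigl(\tfrac{|x|^2}{8C_n^2}+\tfrac{x_n}{2C_n}\bigr)$, checks that for $\tau_h$ small one has $\operatorname{det}D^2\upsilon\geq c\tau_h^{-n}>1$ in $S_1^h$, $\upsilon<1=u_h$ on $\partial S_1^h\setminus G_1^h$, and $D_n\upsilon\geq \tfrac{1}{2C_n\tau_h}>C\geq D_nu_h$ on $G_1^h$, and then invokes the mixed comparison principle (Lemma \ref{comparison principle for mixed problem}) to get $\upsilon<u_h$, contradicting $\upsilon(0)=0=u_h(0)$. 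Your replacement is a subdifferential measure count: $|\partial u_h(\Omega)|=\operatorname{Vol}(\Omega)=\tau_h^n$, while the set $\{p_n>C\tau_h,\ |p|<\tfrac{1}{C\tau_h}\}$, of measure $\geq c\tau_h^{-n}$ for $\tau_h$ small, is contained in the gradient image because a minimizer of $u_h-p\cdot x$ over $\overline{S_1^h}$ can sit neither on the Dirichlet part (by $u_h=1$ there and $S_1^h\subset B_{C\tau_h}(0)$) nor on $G_1^h$ (by testing the viscosity Neumann condition with the affine function $p\cdot x$). The two proofs consume exactly the same inputs --- the inclusion \eqref{nomailzaition slide en}/Lemma \ref{gs cover lemma}, the normalized problem \eqref{nomailzaition tau equation}, and the bound $0\leq a_h\leq C$ from \eqref{nomailzaition tau general assump} --- and encode the same geometric fact, that a section of height $1$ squeezed into a slab of thickness $C\tau_h$ forces a gradient image of measure $\gtrsim\tau_h^{-n}$. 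What the paper's barrier buys is brevity once Lemma \ref{comparison principle for mixed problem} is available, and it works directly at the level of viscosity solutions; what your count buys is independence from the mixed comparison principle and a more transparent mechanism, at the price of the boundary bookkeeping you yourself flag (which you handle correctly: the only point worth noting is that the validity of \eqref{nomailzaition tau equation} for $u_h$ after the sliding transformations is assumed equally by the paper's own proof, so it imposes no extra burden on yours).
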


\begin{proof}
Since $\operatorname{det}\D_h=Vol(S_h)\le Ch^{\frac{n}{2}}$ by Lemma 2.11, we see that $\tau_h\leq C$  by its definition. Notice that
$$\tau_h^{ n} \sim \operatorname{Vol}(S_1^h(0))=  \tau_h^{ n}  \operatorname{\A _h}\sim \tau_h^{ n} .$$
It is enough for (5.9) to prove that  $\tau_h \geq c$.

 Consider the function  $\upsilon$ defined by
\[ \upsilon(x)=\tau_h^{-1}\left(\frac{| x|^2 }{8C_n^2}+\frac{x_n}{2C_n}\right). \]
If $\tau_h>0$ is small, by \eqref{nomailzaition tau equation}, \eqref{nomailzaition tau general assump} and \eqref{good shape cover} we  have
\[\upsilon \leq \tau_h^{-1}\left(\frac{\tau_h^2}{8}+ \frac{\tau_h}{2}\right)   < 1 \leq u_h  \;  \text{ on }   \partial S_1^h \setminus G_1^h , \]
\[\operatorname{det} D^2\upsilon \geq c\tau_h^{-n}>1  =\operatorname{det} D^2u_h\;    \text{ in } S_1^h,\]
and
\[D_{n}v\geq  \frac{1}{2C_n\tau_h} > C \geq D_{n}u_h\;   \text{ on }  G_1^h. \]
By comparison principle (Lemma \ref{comparison principle for mixed problem}) $v <u_h$ in  $S_1^h$ ,  which contradicts   $v(0)=0 = u_h(0)$.  Therefore $\tau_h \geq c$.\\
\end{proof}

\begin{Lemma} \label{gs sym lemma}
The normalization $(u_h,S_1^h)$ given by \eqref{normalization tau definition} satisfies \eqref{good shape symmetric} for large $h$.
\end{Lemma}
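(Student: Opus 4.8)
The plan is to prove the balancedness statement \eqref{good shape symmetric}, namely $\P G_1^h\subset -C\,\P G_1^h$, i.e. that $\P G_1^h$ is balanced about the origin up to a universal constant in the sense of Definition \ref{balance def}. By the cover lemma (Lemma \ref{gs cover lemma}) we have $\P G_1^h\subset\P S_1^h\subset C\,\P G_1^h$, so it suffices to prove the corresponding statement for $\P S_1^h$; and by the volume lemma (Lemma \ref{gs volume lemma}) together with \eqref{nomailzaition slide en}, $S_1^h$ is, up to the bounded sliding transform $A_h^*$, trapped between two concentric balls of radius $\sim 1$ about some point $x_h$ with $c\le (x_h)_n\le C$ and $|x_h|\le C$, so $\P S_1^h$ is automatically balanced about $\P x_h$. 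Since the normalization is built by first sliding the centre of mass onto the $e_n$-axis and then applying John's lemma only to the $x''$-slice, the $x''$-components of $\P x_h$ are controlled, and the one quantity still to be bounded is the $e_1$-offset of $\P x_h$ from the origin --- equivalently, I must rule out $\pm e_1$-lopsidedness of $\P G_1^h$. I would begin the proof by recording these reductions, using the already-established parts of Theorem \ref{good shape theorem} and the facts $0\le a_h\le C$, $u_h\in\Theta(\delta_h)$ (so in fact $a_h$ is small, by Definition \ref{theta delta class}) from \eqref{nomailzaition tau general assump}.

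Then I would argue by contradiction. Suppose that along a sequence $h\to\infty$ the forward and backward extents $s_+, s_-$ of $\P G_1^h$ in the $e_1$-direction from the origin satisfy $s_-/s_+\to 0$; note $s_+\ge c$ because $\operatorname{Vol}_{n-1}(\P G_1^h)\ge c$ while $\P G_1^h\subset B_C$. The only data breaking the $x_1\mapsto -x_1$ symmetry of problem \eqref{nomailzaition tau equation} is the Neumann term $D_nu_h=a_hx_1$, which is odd in $x_1$ with a small bounded coefficient; together with $u_h(0)=0$, $\nabla u_h(0)=0$, $u_h\ge0$ and $u_h=1$ on $\partial S_1^h\setminus G_1^h$ this should force $s_+/s_-$ to be bounded. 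Concretely I would compare $u_h$ with a convex paraboloid $\psi(x)=A|x-x_0|^2+\ell(x)$, with $A$ large, vertex $x_0$ on the thin side, and linear $\ell$ tuned so that $\det D^2\psi>1=\det D^2u_h$ in $S_1^h$, $\psi<u_h=1$ on $\partial S_1^h\setminus G_1^h$, and $D_n\psi>a_hx_1=D_nu_h$ on $G_1^h$ (here the smallness of $a_h$ and the bound $|x_1|\le C$ on $\P G_1^h$ let the paraboloid's normal slope dominate the Neumann datum). The comparison principle for the mixed problem (Lemma \ref{comparison principle for mixed problem}) then yields $\psi<u_h$ on $\overline{S_1^h}$; evaluating at the origin, where $u_h(0)=0$, becomes incompatible with the smallness of $s_-/s_+$ once $A$ is fixed in terms of $s_+$ and $\operatorname{diam}S_1^h$. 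This contradiction proves \eqref{good shape symmetric}. A variant exploiting the smallness of $a_h$ even more directly is to compare $u_h$ against a small modification of its reflection $u_h(-x_1,x'',x_n)$, which solves the same Monge--Amp\`ere equation with Neumann datum $-a_hx_1$, and conclude at once that $\P G_1^h$ and its $e_1$-reflection are comparable.

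The step I expect to be the main obstacle is arranging, simultaneously and with universal constants, the Neumann inequality $D_n\psi>a_hx_1$ on all of $G_1^h$ and the boundary inequality $\psi<u_h$ on $\partial S_1^h\setminus G_1^h$: the inward normal derivative of a paraboloid is affine and is smallest near the ``bottom'' of the section, which is precisely where it must still beat the nonzero Neumann datum, so the vertex $x_0$ and the linear correction $\ell$ must be calibrated against $0\le a_h\le c$, the diameter bound $S_1^h\subset B_C$, and the thinness $s_-$ all at once. Verifying the reduction to the $e_1$-direction, i.e. that the construction really does control the $x''$-behaviour of the John ellipsoid of $S_1^h$, is a secondary but necessary bookkeeping point.
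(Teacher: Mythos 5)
Your overall strategy (argue by contradiction that $\P G_1^h$ comes too close to the origin, then build a convex barrier and invoke the mixed comparison principle, Lemma \ref{comparison principle for mixed problem}) is the same as the paper's, but there is a genuine error at the heart of your argument: you assert that $a_h$ is small ``by Definition \ref{theta delta class}''. It is not. The normalization only gives $0\le a_h\le C$ (see \eqref{nomailzaition tau general assump}); the constant $c$ in Definition \ref{theta delta class} is a universal constant, not a small one, and indeed Lemma 7.3 later proves $a_R\ge c\delta_0 a$ for large $R$, while all of Section 6 is devoted precisely to the \emph{special} case where $a_h$ happens to be small. Both your main barrier argument (``the smallness of $a_h$ \dots lets the paraboloid's normal slope dominate the Neumann datum'') and your reflection variant ($u_h(-x_1,x'',x_n)$ has Neumann datum $-a_hx_1$, which is only close to $+a_hx_1$ when $a_h$ is small) collapse without this. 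The paper instead splits into two cases: if $a_h\ge CM^{-1}$, then $\D_h''\le CM h^{1/2}\I''$ forces $\P S_1^h\supset B''_{cM^{-1}}(0)$, and convexity together with the assumed nearness of $\partial\P G_1^h$ to the origin forces $G_1^h\cap\{x_1>-1/M\}$ to be thin in $x_1$, so that $|D_nu_h|=|a_hx_1|\le C/M$ is small \emph{on the relevant region} even though $a_h$ is not; if $a_h\le CM^{-1}$ the Neumann datum is small outright. This case analysis is the missing idea.

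A second, more technical gap is the barrier itself. An isotropic paraboloid $A|x-x_0|^2+\ell$ cannot simultaneously satisfy $\det D^2\psi>1$, $\psi<1$ on all of $\partial S_1^h\setminus G_1^h$ (which has diameter $\sim 1$, forcing $A\le C$), $D_n\psi>a_hx_1$ on $G_1^h$, and $\psi(0)\ge 0$; you flag this calibration as ``the main obstacle'' but do not resolve it. The paper's resolution is to work on the slab $E=S_1^h\cap\{x_1>-1/M\}$ with an \emph{anisotropic} barrier $Q(Mx_1)+\theta\sum_{i\ge2}Q(x_i/C)$ (steep in $x_1$, flat in the other variables, with $M^2\theta^{n-1}>1$), and to handle the new boundary piece $\{x_1=-1/M\}$ by comparing radial derivatives, using $D_{-x}u_h\le u_h(0)-u_h(x)\le 0$ from \eqref{u=0 nabla u =0}. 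Finally, your reduction of the problem to ``$e_1$-lopsidedness'' via the centering of the John ellipsoid is not justified as stated: John's lemma normalizes the shape of the $x''$-slice but does not place the origin at its center, so one must still rule out $\operatorname{dist}(0,\partial\P G_1^h)$ being small in an arbitrary direction, as the paper does by taking the nearest boundary point $p$ and (in Case 2) rotating so that $e_1=p/|p|$.
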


\begin{proof}
Lemmas 5.2 and 5.3 mean that
$\tau_h$ is between two positive universal constants, and
\begin{equation} \label{5.12}
	\P S_1^h \subset C \P G_1^h \text{ and } B_c(x_h) \subset S_1^h \subset B_C(0)
\end{equation} for some $x_h\in R_+^n$.
Therefore, we only need to show $\P G_1^h$ is balance about $0$, this is
\begin{equation}  \label{5.13} \P {G}_1^h \supset B_c'(0). 	
	\end{equation}
	
Let $M \geq 4n^{4n}C^{4n}$ is large universal, $\theta = M^{-\frac{1}{n+1}}$, $m=M^{-1}$ and %$\delta  \in (0,  \frac{1}{(4n)^{2n-1}M^{3n}})$
$\delta = \frac{1}{ M^3 }$ is small enough.
     On the contrary to  (5.13), there  would exist a $p=(p_1, p'') \in \partial \P{G}_1^h  $ such that
\begin{equation}\label{point distant less than delta}
	|p|=\operatorname{dist}(0,p)=\operatorname{dist}(0,\partial \P {G}_1^h ) \leq \delta.
\end{equation}
Without loss of generality, we may assume that  $p_1 \geq 0$. There are two cases:

Case 1:\;   $    |a_h| \geq CM^{-1}$. Then  we have
$$ \max\{d_2(h), \cdots,  d_{n-1}(h) \}\leq CMh^{\frac{1}{2}},$$ which  means that
\[ B_{cM^{-1}}''(0) \subset \P {S}_1^h(0) \subset B_C'(0) .\]
By convexity and \eqref{point distant less than delta}, we get
\[  \sup \{x_1|   x \in {G}_1^h\}  \leq CM\delta \leq \frac{C}{M}.\]
Therefore, \eqref{nomailzaition tau general assump} and the Neumann condition in \eqref{nomailzaition tau equation} implies
\[	|D_nu_h(x',0) | \leq \frac{C}{M}, \text{ on } x_1\geq  -\frac{1}{M}.
\]

Case 2:\;   $    |a_h| \leq CM^{-1}$. Then
\[	|D_nu_h(x',0) | \leq \frac{C}{M}, \text{ on } {G}_1^h . %x_1\geq -\frac{1}{M}
\]
Next,  we consider a new  coordinate with  zero as based point, axis ${e}_1= p/|p|$ and   $e_i (i \geq 2) $ unchanged. For simplicity, we still use the notation $(e_1, e_2,\cdots , e_n)$.

%In both cases, we

In both cases, consider  the domain $E= \{x\in S_1^h| x_1 > -\frac{1}{M}\}$.  Then $ \partial E=E_1 \cup E_2\cup E_3$,
where
\[
\begin{array}{l}
	E_1=\overline{E}\cap \{ x_1 = -\frac{1}{M}\}, \\
	E_2=\overline{E}\cap G_1^h \cap \{ x_1 > -\frac{1}{M}\} ,\\
	E_3 =\overline{E}\cap ( \partial S_1^h \setminus G_1^h ).
\end{array}
\]
For the function
\[ \upsilon(x)=\left[Q(Mx_1) +\theta  \sum_{i=2}^{n}Q(\frac{x_i}{ C})  \right] \]
 where  $Q(t)= \frac{1}{4nC^2}(\frac{t^2}{2}+2t),  $  we
  claim that
\begin{equation}\label{gs sym com equ}
	\begin{cases}
		\operatorname{det} D^2 {u_h} < \operatorname{det} D^2\upsilon        & \text{ in }  E\\
		D_{-x}{u_h} < D_{-x}\upsilon                    & \text{ on } E_1 \\
		D_{n} {u_h} < D_{n}\upsilon                    & \text{ on } E_2 \\
		\upsilon <  {u_h}                   &\text{ on } E_3\\
	\end{cases} .
\end{equation}
  By comparison principle (Lemma \ref{comparison principle for mixed problem}) we have $ \upsilon < u_h$ in $E$. This would contradict  $\upsilon(0)= u_h(0)$ and hence \eqref{5.13} is proved.

 Finally, let us  verify \eqref{gs sym com equ}. First, by convexity and \eqref{u=0 nabla u =0} we see that
 $$D_{-x}{u_h}=-D {u_h}\cdot x \leq  u_h(0)-u_h(x) \leq 0  $$ holds in viscosity sense. Applying this, the results of Case 1 and 2, and the largeness of $M$,
we have:

in $E$,   $\operatorname{det} D^2 \upsilon \geq CM^2 \theta^{n-1}    >1=\operatorname{det} D^2 {u_h}$;

on $E_1$,  $D_{-x}\upsilon \geq c-\theta \sum_{i=2}^{n-1}C  >0   \geq D_{-x} u_h$;

on $E_2$,  $D_{n}\upsilon  \geq c\theta  >\frac{C}{M} \geq D_{n}u_h  $;

on $E_3$,   $\upsilon\leq  \frac{3}{4} <1={u_h}$.
\end{proof}

 In this way, we have proved Theorem 5.1.  Since     $c\leq \tau_h\leq C$, we can still take
	  $$\D_h =\operatorname{diag}\{ d_1(h), \cdots , d_n(h)\}$$
in \eqref{normalization tau definition} instead of $\bar \D_h$. This yields the definition of normalization family.

\begin{Definition}\label{normal definition}
The normalization family $(u_h,S_1^h) $ of $(u, S_h)$ is defined as
\begin{equation} \label{normal function}
	{u}_{h}(x)= \frac{u( {\D_h}{\A}_{h} ^{-1} x)}{h}, \ x \in S_1^h: =  {\A}_{h}{\D_h}^{-1}  S_{h}.
\end{equation}
\end{Definition}
By \eqref{section innitial assump}, \eqref{nomailzaition tau equation}-\eqref{good shape symmetric}, we have the following properties for large $h$:
{\sl
	 \begin{equation}  \label{normal section}
		  B_{c}^+(0) \subset S_1^h \subset    B_{C }(0) ,
	\end{equation}
 \begin{equation} \label{normal equation}
	\begin{cases}
		\operatorname{det}D^2 {u}_h=1     &  \text{ in } {S}_1^h\\
		D_{n} {u}_h =a_h x_1               &  \text{ on }  {G}_1^h \\
		{u}_{h}=1                   &  \text{ on }  \partial {S}_1^h \setminus {G}_1^h
	\end{cases}
\end{equation}
where
\begin{equation} \label{normal neumann}
	a_h =\frac{{d}_1(h){d}_n(h)}{h}a,
\end{equation}

\begin{equation} \label{normal growth}
	u_h(0,z,0) \leq  z^T \Q_h z +\delta_h
\end{equation}
where
\begin{equation}\label{normal Qdelta}
	\delta_h=\frac{K}{h}, \ \bar{\Q}=\frac{C{\D_h}''^TI'' {\D_h}''}{h},
\end{equation} and
\begin{equation} \label{normal genral assump}
	0\leq a_h \leq C\text{ and } a_h^2 \operatorname{det} \Q_h \leq C.
\end{equation}
 Moreover, Theorem 5.1 still holds for the normalization family $(u_h,S_1^h) $. }

\section{Stationary Theorem} %6

 In this section, we study the normalization family $(u_h, S_1^h)$ given by \eqref{normal function} in the case $a_h$ is small.
 For simplify we omit $h$ and write it by $(u,S_1 )$.    This leads us to study the  standard model
\begin{equation} \label{6.1}
	\begin{cases}
		\operatorname{det}D^2 {u}=1     &  \text{ in } {S}_1\\
		D_{n} {u} =a x_1               &  \text{ on }  {G}_1 \\
		{u}=1                   &  \text{ on }  \partial {S}_1 \setminus {G}_1
	\end{cases}
\end{equation}
and
 \begin{equation}  \label{6.2}
 	 u(0)=0 , \ \nabla u(0)=0,\  u\geq 0 \; in \; S_1,  \   B_{c}^+(0) \subset S_1 \subset    B_{C }(0) .
 \end{equation}
For every $a\in R$, set
\begin{equation} \label{6.3}
	Q_{a} (x):= \frac{1}{2}\left[\sqrt{1+a^2}(x_1^2+x_n^2)+2ax_1x_n+\sum_{i=2}^{n-1}x_i^2\right].
\end{equation}
Obviously, $\operatorname{det}D^2 Q_{a} (x) \equiv 1 $ and $D_n Q_{a} (x', 0)= ax_1$.

We will prove the following
\begin{Theorem} \label{stationary thm}
 Suppose that $u\in (\overline{S_1})$ is a viscosity solution to problem \eqref{6.1} and \eqref{6.2} is satisfied.
 There exists universal constants $b_0>0$, $c_0>0$ such that if
\begin{equation} \label{6.4}
\begin{split}
 ||u-Q_a||_{L^{\infty}(S_1^{Q_a}(0))}+  ||D_nu-D_nQ_a||_{L^{\infty}(S_1^{Q_a}(0) )} \leq b_0,
\end{split}
\end{equation}
then $u \in C^{2,\alpha}(\overline{B_{c_0}^+(0)})$  for some $\alpha >0$ and
\[||u||_{C^{2,\alpha}(\overline{B_{c_0}^+(0)})} \leq C.\]
\end{Theorem}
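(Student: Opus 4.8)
The plan is to prove Theorem~\ref{stationary thm} by a Caffarelli--Savin improvement-of-flatness iteration, anchored on the two facts that $Q_a$ is an \emph{explicit} smooth solution of the \emph{same} Neumann problem and that the datum $ax_1$ is invariant under the parabolic rescaling $w\mapsto w(r\,\cdot)/r^2$ (indeed $D_n\bigl(w(rx)/r^2\bigr)|_{x_n=0}=r^{-1}(D_nw)(rx',0)=ax_1$ and $\det D^2$ is unchanged). The case $a=0$ is immediate: there $Q_0(x)=\tfrac12|x|^2$, and by Lemma~\ref{viscosity neumann equiv} and Corollary~\ref{en continues} (exactly as in Remark~\ref{reflect classical results}) the even reflection $\bar u(x',x_n):=u(x',|x_n|)$ is a convex generalized solution of $\det D^2\bar u=1$ on the full ball $B_c(0)$, which is $b_0$-close to $\tfrac12|x|^2$ and hence strictly convex there; Caffarelli's interior estimate (Lemma~\ref{c2a classical estimate}, via Lemma~\ref{strict classical line lemma}) then gives $\bar u\in C^{2,\alpha}(\overline{B_{c_0}(0)})$ with a universal bound, and restricting proves the claim. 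For $a\neq0$ one cannot reflect ($D^2Q_a$ has a cross term), and the iteration below is needed.

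First, from \eqref{6.4} and $0\le a\le C$ (so $\|Q_a\|_{C^2}$ is universal), Corollary~\ref{small perturb means gradient lemma} with $v=Q_a$ gives $|\nabla u-\nabla Q_a|\le Cb_0^{1/2}$ on $B_c^+(0)$, so the sections $S^u_h(0)$ are $Cb_0^{1/2}$-close to the half-ellipsoids $S^{Q_a}_h(0)$, hence of good shape, $u$ is strictly convex in $B_c^+(0)\cap\R_+^n$, and $u\in C^{2,\alpha}_{\mathrm{loc}}$ in the interior (Lemmas~\ref{strict classical line lemma},~\ref{c2a classical estimate}); what remains is the estimate up to the flat boundary near $0$. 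The core is an improvement lemma: there are universal $\theta\in(0,\tfrac12)$, $\alpha\in(0,1)$, $C_\ast$, $\eta_0>0$ so that if $w$ is a normalized solution of \eqref{6.1} at some scale (same $a$) and $P$ is a quadratic with $\det D^2P=1$, $D_nP(x',0)=ax_1$, $P(0)=0$, $\nabla P(0)=0$, $\|D^2P-D^2Q_a\|\le1$ and $\|w-P\|_{L^\infty(B_1^+(0))}\le\eta\le\eta_0$, then there is another such quadratic $P'$ with $\|D^2P'-D^2P\|\le C_\ast\eta$ and $\|w-P'\|_{L^\infty(B_\theta^+(0))}\le\theta^{2+\alpha}\eta$. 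I would prove this by compactness: if it fails, set $\tilde w_j:=(w_j-P_j)/\eta_j$; the interior estimates above give $w_j\to\bar P$ in $C^2_{\mathrm{loc}}$ of the interior, so in the interior $\tilde w_j$ solves $a_j^{ik}D_{ik}\tilde w_j=0$ with $a_j^{ik}\to U^{ik}:=\operatorname{cof}(D^2\bar P)^{ik}$ (a constant uniformly elliptic matrix), while on the flat boundary $D_n\tilde w_j=(a_jx_1-a_jx_1)/\eta_j\equiv0$ \emph{exactly}; using the explicit barriers $w^{y,\pm}$ of Section~3 to obtain a uniform boundary modulus of continuity, extract $\tilde w_j\to\tilde w$ in $C_{\mathrm{loc}}(\overline{B_1^+})$ with $\tilde w$ a bounded solution of the constant-coefficient problem $U^{ik}D_{ik}\tilde w=0$ in $B_1^+$, $D_n\tilde w=0$ on $B_1'$; a linear change of variables turns this into the Laplacian with a regular (non-co-normal when $a\neq0$, but still oblique) boundary operator, so by standard Schauder theory $\tilde w\in C^{2,\alpha}(\overline{B_{1/2}^+})$ with a universal bound, whence $\tilde w(x)=\ell\cdot x+\tfrac12x^TMx+O(|x|^{2+\alpha})$ with $U:M=0$ and $M_{in}=0$ for $i<n$ (differentiate $D_n\tilde w\equiv0$ tangentially) --- and these are precisely the equations cutting out, at $\bar P$, the tangent space to the manifold $\mathcal M$ of admissible quadratics (those with $\det D^2P=1$, $D_nP(x',0)=ax_1$, $P(0)=\nabla P(0)=0$). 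Taking $P'$ to be the nearest point of $\mathcal M$ to $P_j+\eta_j(\ell\cdot x+\tfrac12x^TMx)$ --- which differs from the latter by $O(\eta_j^2)$ by second-order tangency --- contradicts the failure once $j$ is large ($\theta$ being fixed by the exponent $2+\alpha$).

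Given the improvement lemma, I would iterate it on the rescalings $w_k(x):=u(\theta^kx)/\theta^{2k}$ (defined on $B_1^+(0)$ for all $k$ once $c_0$ is small; these solve \eqref{6.1} with the same $a$ by parabolic invariance), starting from $P_0=Q_a$, $\eta_0=b_0$ (admissible since $b_0\le\eta_0$ and $\sum_k\theta^{(1+\alpha)k}<\infty$ keeps $\|D^2P_k-D^2Q_a\|\le Cb_0\le1$). This produces admissible quadratics $P_k$ with $\|w_k-P_k\|_{L^\infty(B_1^+)}\le\theta^{(2+\alpha)k}b_0$ and $\|D^2P_{k+1}-D^2P_k\|\le C_\ast\theta^{(1+\alpha)k}b_0$; the Hessians converge, defining a quadratic $P_\infty$ with $\det D^2P_\infty=1$, $D_nP_\infty(x',0)=ax_1$, $\|D^2P_\infty-D^2Q_a\|\le Cb_0$, and $|u(x)-P_\infty(x)|\le C|x|^{2+\alpha}$ on $B_{c_0}^+(0)$, i.e.\ pointwise $C^{2,\alpha}$ regularity of $u$ at $0$. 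Hypothesis \eqref{6.4} is moreover stable under translating the base point to any $y'\in B_{c_0}'(0)$ and subtracting $\nabla Q_a(y',0)\cdot(x-y')$ (the translated problem is again of type \eqref{1.3 equation} with $\beta\cdot e_n=0$, and the decay just obtained shows it is $b_0$-close to the associated explicit quadratic), so the same gives a uniform pointwise $C^{2,\alpha}$ estimate at every $y'\in\overline{B_{c_0}'(0)}$; combined with the interior $C^{2,\alpha}$ estimate (valid because every section centered near the flat boundary is now squeezed between the paraboloids $P^{y'}\pm C|\cdot-y'|^{2+\alpha}$ and has good shape) and the standard passage from uniform pointwise $C^{2,\alpha}$ estimates to $u\in C^{2,\alpha}(\overline{B_{c_0}^+(0)})$ with $\|u\|_{C^{2,\alpha}}\le C$, this finishes the proof.

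The main obstacle I expect is the up-to-the-boundary part of the compactness in the improvement lemma: $\tilde w_j$ is a priori only interior-smooth, so to obtain a limit realizing the oblique boundary value one needs a uniform boundary modulus of continuity, and this is exactly where the \emph{exact} identity $D_nw_j=ax_1=D_nP_j$ together with the barriers $w^{y,\pm}$ of Section~3 must be pushed; one must also check that the constant-coefficient problem obtained in the limit --- whose boundary operator $D_n$ is \emph{genuinely oblique}, not co-normal, since $\operatorname{cof}(D^2Q_a)^{1n}=-a\neq0$ --- has bounded solutions that are $C^{2,\alpha}$ up to the flat boundary, and that the Taylor coefficient $M$ satisfies exactly the two compatibility relations $U:M=0$ and $M_{in}=0$ ($i<n$) needed to keep the corrected quadratic $P'$ on the admissible manifold $\mathcal M$.
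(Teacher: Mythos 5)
Your proposal follows essentially the same route as the paper: the paper's Lemma 6.3 is exactly your improvement-of-flatness step (proved there by direct linearization of $(u-Q)/\varepsilon$ to the constant-coefficient oblique problem $Lw=0$ in the interior with $D_nw=0$ on the flat boundary, with the Section 3 barriers $w^{y,\pm}$ supplying the boundary modulus of continuity and a small determinant correction keeping the new quadratic admissible), and the paper then iterates it at geometrically decaying heights so that the Hessians converge, transports the resulting pointwise $C^{2,\alpha}$ estimate to nearby boundary points, and patches with the interior $C^{2,\alpha}$ theory, just as you do. The remaining differences are presentational (contradiction-compactness versus direct linearization, isotropic half-balls versus sections of the reference quadratic, and your separate reflection treatment of $a=0$).
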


With this theorem, we will also show
\begin{Theorem}\label{zero petrub neumann}
Suppose that $u\in (\overline{S_1})$ is a viscosity solution to problem \eqref{6.1} and \eqref{6.2} is satisfied.
There exists universal constants   $\delta_0>0$, $c_0>0$ such if $  a \in (- \delta_0,  \delta_0)$, then
then $u \in C^{2,\alpha}(\overline{B_{c_0}^+(0)})$  for some $\alpha >0$ and
\begin{equation} \label{6.6}
\begin{split}
||u||_{C^{2,\alpha}(\overline{B_{c_0}^+(0)})} \leq C.
\end{split}
\end{equation}
\end{Theorem}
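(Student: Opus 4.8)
The plan is to deduce this from Theorem \ref{stationary thm} by verifying that solutions with small Neumann slope are automatically a small $L^\infty$-perturbation of the model $Q_a$. The central mechanism is a compactness/contradiction argument: if the conclusion failed there would exist $a_k\to 0$ and viscosity solutions $u_k$ of \eqref{6.1}--\eqref{6.2} with $a=a_k$ whose $C^{2,\alpha}$ norms blow up (or which fail to be $C^{2,\alpha}$ on $\overline{B_{c_0}^+(0)}$). First I would record that, by \eqref{6.2}, each $u_k$ is trapped between two fixed paraboloid-type barriers: the upper barrier comes from Lemma \ref{sup solution to lip} (applicable since \eqref{3.9} holds with a universal $C$ once $a_k$ is bounded, using the normalization estimates and $\P S_1 \subset C\P G_1$), giving $u_k(te_n)\le Ct^2$ and more generally $u_k \le C$ on $S_1$; the lower bound $u_k\ge 0$ is \eqref{6.2}. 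Combined with the interior estimates (Lemma \ref{c2a classical estimate}) and the boundary gradient estimates (Lemmas \ref{boundary nabla estimate}, \ref{interior nabla estimate}, and Corollary \ref{small perturb means gradient lemma}), the family $\{u_k\}$ is locally uniformly bounded and equi-Lipschitz on $\overline{B_{c}^+(0)}$, hence a subsequence converges locally uniformly to a convex limit $u_\infty \in C(\overline{B_c^+(0)})$.

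Next I would identify the limit. Since $a_k\to 0$, the Neumann conditions $D_n u_k(x',0)=a_k x_1$ pass to the limit (using Lemma \ref{linearized problem}, which gives continuity of $D_n u_k$ up to the boundary, together with Corollary \ref{small perturb means gradient lemma} to control the convergence of normal derivatives) so that $D_n u_\infty(x',0)=0$. The stability of viscosity solutions of $\det D^2 u=1$ under uniform convergence gives $\det D^2 u_\infty =1$ in $B_c^+(0)$ in the viscosity/Alexandrov sense. Thus $u_\infty$ is a viscosity solution of the $a=0$ Neumann problem. By Remark \ref{reflect classical results} (reflection across $\partial\R_+^n$ using Lemma \ref{viscosity neumann equiv} and Corollary \ref{en continues}) the even extension of $u_\infty$ solves $\det D^2 = 1$ in a full neighborhood of the origin; by the interior regularity theory \cite{[C1],[C2]} it is strictly convex and $C^{2,\alpha}$ near $0$ — in fact, after the normalization \eqref{6.2}, its second-order Taylor polynomial at $0$ is a model $Q_{0}$ up to a $\det D^2=1$ linear change of variables, and the a priori estimates pin $\|u_\infty - Q_0\|$ down quantitatively.

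From here the contradiction is immediate: for $k$ large, $\|u_k - u_\infty\|_{L^\infty(S_1^{Q_0}(0))}$ and $\|D_n u_k - D_n u_\infty\|_{L^\infty}$ are both $\le b_0/2$, and $\|u_\infty - Q_{a_k}\|$ is $\le b_0/2$ since $u_\infty$ is smooth and $Q_a\to Q_0$ uniformly on the relevant compact set as $a\to 0$ (with a harmless linear adjustment absorbed into the normalization). Therefore the hypothesis \eqref{6.4} of Theorem \ref{stationary thm} holds for $u_k$ with $a=a_k$, so $u_k\in C^{2,\alpha}(\overline{B_{c_0}^+(0)})$ with $\|u_k\|_{C^{2,\alpha}(\overline{B_{c_0}^+(0)})}\le C$ — contradicting the blow-up assumption. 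This proves \eqref{6.6}.

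The main obstacle I anticipate is the boundary behavior of the limit: ensuring that the normal derivatives converge up to $\partial\R_+^n$ (not merely in the interior) so that the limiting Neumann condition is genuinely $D_n u_\infty(\cdot,0)=0$, and that the convergence of $u_k$ is strong enough on the model section $S_1^{Q_{a_k}}(0)$ — whose shape itself depends on $a_k$ — to feed \eqref{6.4}. This is exactly what the barrier lemmas of Section 3 (Lemmas \ref{sup solution to lip}, \ref{sub solution by low bar} and their corollaries) and the perturbation estimate of Corollary \ref{small perturb means gradient lemma} are designed to handle, but matching the moving domains $S_1^{Q_{a_k}}(0)$ to a fixed $S_1^{Q_0}(0)$ cleanly, and checking that the lower barrier hypothesis \eqref{low bar assump} survives the limit, will require care.
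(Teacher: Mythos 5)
Your overall strategy (compactness in $a\to 0$, reduction to the $a=0$ problem, then Theorem \ref{stationary thm}) is genuinely different from the paper's, which is fully constructive: the paper reflects the section across $\{x_n=0\}$, solves the Dirichlet problem $\operatorname{det}D^2v=1$, $v=1$ on the boundary of the symmetrized set, so that $v$ is automatically smooth with $D_nv=0$ on $G_1$, and then shows $\|u-v\|_{L^\infty(S_1)}\leq Cb_0^6$ by the explicit barriers $v\pm 2b_0^6(2C-x_n)$ (this is where the quantitative choice $\delta_0=b_0^{6n}$ enters). Your route could in principle work, but as written it has a genuine gap at the final step. Theorem \ref{stationary thm} requires \eqref{6.4}, i.e.\ closeness to the specific quadratic $Q_a$ of \eqref{6.3} on the \emph{unit-height} section $S_1^{Q_a}(0)$. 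You claim $\|u_\infty-Q_{a_k}\|\leq b_0/2$ there because $u_\infty$ is smooth; this is false at unit scale. The limit $u_\infty$ is just some normalized solution of the $a=0$ problem: $D^2u_\infty(0)$ is an arbitrary matrix $M$ with $\operatorname{det}M=1$ (generally far from $\I$), and even after a measure-preserving linear change of variables bringing $M$ to the model form, the distance from $u_\infty$ to its second-order Taylor polynomial on a unit-size section is $O(1)$, not $O(b_0)$. The indispensable missing step is the rescaling $\tilde u(x)=u(rTx)/r^2$ with $r\sim b_0$ chosen so that the cubic Taylor remainder $Cr^3$ becomes, after division by $r^2$, of size $b_0$ --- exactly the paper's $\tilde u(x)=u(b_0(\B\B_1)^{-1}x)/b_0^2$. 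One must then also check that $\tilde u$ still satisfies the normalization \eqref{6.2} and track how the compactness error scales (it becomes $\epsilon_k/r^2$, harmless for $k$ large once $r$ is fixed), and be careful with the quantifiers so that the non-uniform constants produced by the subsequential limit still yield a contradiction.

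A second, smaller gap is the assertion that the reflected limit is ``strictly convex and $C^{2,\alpha}$ near $0$ by interior regularity.'' Interior regularity for $\operatorname{det}D^2=1$ requires strict convexity, and for $n\geq 3$ a local solution in a half-ball with zero Neumann data need \emph{not} be strictly convex at a boundary point --- the paper's own Example 4.3 and Remark 4.5 exhibit such degenerate solutions vanishing on a segment through $0$. To exclude this for $u_\infty$ you must pass to the limit on the \emph{whole} normalized section, using that $u_k=1$ on $\partial S_1\setminus G_1$ and $S_1\subset B_C(0)$ from \eqref{6.1}--\eqref{6.2}: then the reflected limit has a section $S_{1/2}^{\hat u_\infty}(0)$ compactly contained in the reflected domain, and Lemma \ref{strict classical line lemma} forces the contact set at $0$ to be $\{0\}$, after which Caffarelli's interior theory applies. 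Working only on $B_c^+(0)$, as your sketch does, this conclusion is not available. Both gaps are repairable, but they are precisely the points where the paper's direct construction (a smooth comparison function on the symmetrized section, Taylor-expanded at scale $b_0$) does the work your compactness argument leaves implicit.
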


Denote
\begin{align*}
			\F_{a}^* (x): =& \{  Q|\, Q(x)=\sum_{i, j=1}^na^{ij}x_ix_j \text{ is a quadratic polynomial,} \\
	& \text{ and } Q \text{ solves the first and second equations of problem }\eqref{6.1} \}
\end{align*}
 and
\[	\F_{a} (x):= \{ l(x')+Q(x)|\, Q \in \F_a^*,   l(x') \text{ is a linear function }, l(0)=0 \}.\]

Theorem \ref{stationary thm} relies on the following Lemma.
\begin{Lemma} \label{station lemma}
Let $u \in C(\overline{S_1(0)})$ be a viscosity solution to problem \eqref{6.1}, and satisfy  \eqref{6.2}.
For any small constant $\mu>0$, there exists a small $\epsilon_0=\epsilon_0(\mu)>0$ such that if
\begin{equation} \label{6.9}
 ||u-P||_{L^{\infty}(S_1)}  \leq  \epsilon  ,\ \   ||D_nu-D_nQ||_{L^{\infty}(S_1)}  \leq C \epsilon
\end{equation}
for some  $\epsilon \in (0,  \epsilon_0) $ and $P(x)=l(x') +Q(x) \in \F_{a}$, then
 \begin{equation} \label{6.10}
 |Dl|\leq C\epsilon^{\frac{1}{2}},
\end{equation}
and
\begin{equation} \label{6.11}
 |u-P^0| \leq C \epsilon  \mu^{\frac{3}{2}}, \ \   |D_nu-D_nQ^0|\leq  C\epsilon  \mu \text{ in } S_{\mu}^{Q^0}
\end{equation}
 for  some $P^0=l^0+ Q^0 \in \F_{a}$. Moreover,
\begin{equation} \label{6.12}
 |D^2Q-D^2Q^0|\leq 2\epsilon  \text{ in } S_{\mu}^{Q^0}.
\end{equation}
\end{Lemma}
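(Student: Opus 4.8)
The plan is to argue by compactness and contradiction. Suppose the statement fails for some fixed $\mu>0$: then there is a sequence $\epsilon_k\to 0^+$, viscosity solutions $u_k$ of \eqref{6.1}--\eqref{6.2} on sections $S_1^{(k)}$, constants $a_k$, and polynomials $P_k=l_k+Q_k\in\F_{a_k}$ with $\|u_k-P_k\|_{L^\infty(S_1^{(k)})}\le\epsilon_k$ and $\|D_nu_k-D_nQ_k\|_{L^\infty}\le C\epsilon_k$, but for which either \eqref{6.10}, \eqref{6.11}, or \eqref{6.12} is violated. First I would establish the bound \eqref{6.10} directly: normalize by setting $w_k=(u_k-P_k)/\epsilon_k$, so $\|w_k\|_{L^\infty}\le 1$; since $u_k(0)=0=P_k(0)$ and $\nabla u_k(0)=0$, and $\nabla P_k(0)=Dl_k$ (as $Q_k(0)=0$, $\nabla Q_k(0)=0$), Corollary \ref{small perturb means gradient lemma} applied to $u_k$ and $v=P_k$ (note $P_k\in C^2$ with $\|P_k\|_{C^2}\le C$ since $a_k$ is controlled and the quadratic part solves the Monge--Amp\`ere equation, while $l_k$ is linear) gives $|\nabla u_k(0)-\nabla P_k(0)|\le C\epsilon_k^{1/2}$, i.e. $|Dl_k|\le C\epsilon_k^{1/2}$. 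This is \eqref{6.10}, and it also lets us replace $P_k$ by $Q_k$ up to an error $C\epsilon_k^{1/2}|x|$, which is harmless at scale $\mu$.

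Next I would pass to a limit. Since $a_k\in[0,C]$ and $a_k^2\det Q_k\le C$, after a subsequence $a_k\to a_\infty$, $Q_k\to Q_\infty\in\F_{a_\infty}^*$, and $Dl_k\to 0$, so $P_k\to Q_\infty$ in $C^2$ on compact sets. The sections $S_1^{(k)}$ are squeezed between $B_c^+(0)$ and $B_C(0)$ and converge (in Hausdorff distance, after a further subsequence) to a limit convex domain $S_\infty$ with $B_c^+(0)\subset S_\infty\subset B_C(0)$; the $u_k$ are uniformly Lipschitz on compact subsets of $\overline{\R_+^n}$ by Lemma \ref{boundary nabla estimate} (using the Neumann bound $\varlimsup D_nu_k\ge -C$ from $0\le a_k x_1$ and $|x_1|\le C$), hence converge locally uniformly to a convex function $u_\infty$. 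Because $u_k-P_k\to 0$ uniformly, in fact $u_\infty\equiv Q_\infty$ on $S_\infty$; in particular $u_\infty$ extends to a global smooth solution of $\det D^2u=1$ with the linear Neumann data, and on the section $S_\mu^{Q_\infty}$ (which is a genuine ellipsoidal-type section, of good shape, contained well inside $S_\infty$ for $\mu$ small) we may take $P^0=Q^0=Q_\infty$, so that \eqref{6.12} holds trivially with room to spare and \eqref{6.11} would read $\|u_k-Q_\infty\|\le C\epsilon_k\mu^{3/2}$, $\|D_nu_k-D_nQ_\infty\|\le C\epsilon_k\mu$ on $S_\mu^{Q_\infty}$.

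To get the quantitative decay \eqref{6.11} I would study the rescaled defects $w_k=(u_k-Q_\infty)/\epsilon_k$ (absorbing the $O(\epsilon_k^{1/2})$ linear correction). Using the concavity/convexity of $\det^{1/n}$ one shows $w_k$ satisfies a linear uniformly elliptic equation $a_k^{ij}D_{ij}w_k=o(1)$ in $S_1^{(k)}$ (the coefficients are the cofactors of $D^2$ of an interpolation between $u_k$ and $Q_\infty$, uniformly elliptic because both functions have Hessian comparable to $\I$ on good-shape sections away from the boundary — this is where interior strict convexity of $u_k$, via Section 3, and the good shape of $S_\mu$ enter), with oblique boundary condition $D_n w_k=0$ on $G_1^{(k)}$ and $w_k$ bounded. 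A subsequence of $w_k$ converges to a solution $w_\infty$ of a constant-coefficient oblique-derivative problem on $S_\infty$, bounded, hence by the $C^{1,\alpha}$ (indeed $C^\infty$ up to the flat boundary) theory for such problems $w_\infty$ is as regular as we like, and $w_\infty-\nabla w_\infty(0)\cdot x - \tfrac12 x^TAx$ (its second-order Taylor expansion at $0$, where the quadratic part $A$ must be trace-free against the cofactor matrix and hence corresponds to an infinitesimal deformation of $Q_\infty$ inside $\F^*$) decays like $|x|^{2+\alpha}$, i.e. is $\le \tfrac12\mu^{3/2}$ on $B_\mu^+$ for $\mu$ small. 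Pulling this back: $Q^0:=Q_\infty+\epsilon_k\cdot(\text{the quadratic part of }w_\infty)$ lies in $\F_{a_k}^*$ up to error $o(\epsilon_k)$ (adjust $a_k$ slightly, or project onto $\F^*$), $l^0$ absorbs the linear part and the earlier $O(\epsilon_k^{1/2})$ term (which is harmless after noting it is $\le C\epsilon_k$ once we know $|Dl_k|\le C\epsilon_k^{1/2}$ improves — actually one iterates, but at this single step it suffices to allow $l^0$ to carry it), and \eqref{6.11}--\eqref{6.12} follow for $k$ large, contradicting the choice of the sequence.

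The main obstacle is the last step: producing the linearized oblique-derivative problem with \emph{uniformly elliptic} coefficients and controlling it up to the flat part of the boundary. The ellipticity degenerates near $\partial S_1\setminus G_1$, so one must localize to the good-shape core — where Lemma \ref{c2a classical estimate} gives uniform $C^{2,\alpha}$ bounds on $u_k$ in the interior and the good shape of $S_\mu^{Q_\infty}$ keeps us away from the bad boundary — and simultaneously handle the genuine boundary regularity on $G_1$ for the oblique problem, which is the technical heart deferred to the $C^{1,\alpha}$-estimate machinery of Section 8. Matching the limiting infinitesimal deformation $A$ of $w_\infty$ with an honest element of $\F_{a_k}^*$ (so that $Q^0$ really solves $\det D^2Q^0=1$ and $D_nQ^0=a_kx_1$, not just to first order) also requires a short argument — one checks $\F_a^*$ is a smooth manifold near $Q_a$ and that the tangent space is exactly the space of admissible $A$'s, so the implicit function theorem corrects the $o(\epsilon_k)$ discrepancy.
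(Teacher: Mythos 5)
Your overall strategy is the same as the paper's: normalize the defect $u-Q$ by $\epsilon$, show it converges to a solution $w$ of the constant-coefficient oblique-derivative problem \eqref{6.15}, use the smoothness of $w$ to extract a cubic-order Taylor expansion at $0$ (hence the gain $\mu^{3/2}$ at scale $\mu$), and then correct the resulting quadratic $Q+\epsilon R$ back into the exact family $\F_a$ by a small perturbation (your implicit-function-theorem step is what the paper does explicitly by solving $\det[D^2Q+\epsilon D^2R+t\I]=1$ for $t=t_0$ with $|t_0|\le C\epsilon^2$). Your derivation of \eqref{6.10} via Corollary \ref{small perturb means gradient lemma} is also fine and matches the paper's direct argument in spirit. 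However, there are two concrete problems. First, in your compactness setup you normalize by the \emph{limit} polynomial, $w_k=(u_k-Q_\infty)/\epsilon_k$. Since $\|Q_k-Q_\infty\|$ is only $o(1)$ along the subsequence and not $O(\epsilon_k)$, these $w_k$ need not be bounded, and the conclusion \eqref{6.12}, which pins $Q^0$ to within $2\epsilon$ of the \emph{given} $Q$, cannot be recovered from data anchored at $Q_\infty$. You must normalize by $Q_k$ itself (as the paper does with $v_\varepsilon=(u-Q)/\varepsilon$ after reducing to $P=Q=Q_a$ by a sliding transformation); the coefficients of the linearized operator then converge to those of $L$ and the rest goes through.

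Second, and more seriously, the convergence of $(v_\varepsilon,D_nv_\varepsilon)$ \emph{up to the flat boundary} $G_1$ is the heart of the lemma, and you leave it as an acknowledged gap, deferring it to ``the machinery of Section 8.'' Section 8 does not supply it (it is the final iteration for the Liouville theorem); what is actually needed, and what the paper uses here, are the explicit barriers $w^{y,\pm}$ of \eqref{sup sub test function}: since $w^y=u^y-\tfrac{n}{2}x_nD_nu^y$ solves $U^{ij}D_{ij}w^y=0$, the function $w^{y,+}$ dominates both $V=D_nv_\varepsilon$ and $-V$ in the mixed problem on $S_{1/4}$, giving the crucial bound $|D_nv_\varepsilon|\le Cx_n$, hence $|v_\varepsilon(x',0)-v_\varepsilon(x',x_n)|\le Cx_n^2$, which combined with the interior gradient bound at height $\rho\sim\max\{r^{1/3},\epsilon^{1/2}\}$ yields the equicontinuity $\operatorname{Osc}_{B_r}v_\varepsilon\le C\max\{r^{2/3},\epsilon\}$ on $G_{1/4}$. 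Without some such quantitative boundary barrier the limit $w$ cannot be identified as a solution of \eqref{6.15} with $D_nw=0$ on $\{x_n=0\}$, and the expansion $D_nw(x)-2b_nx_n=O(|x|^2)$, which is exactly what produces the second estimate in \eqref{6.11}, is not available. So the proposal is a correct skeleton, but the step you flag as the ``technical heart'' must be filled in with the Section 3 barrier construction rather than postponed.
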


\begin{proof}
	 By virtue of the fact $l(0)=0 $, \eqref{6.9} gives $|D l| \leq C\epsilon$. One can choose  $x=(x', 0)\in G_1$ such that
$$-l(x')=\frac{1}{n}|Dl||x'| \, \text{ and }\; |x'|\sim \epsilon^{\frac{1}{2}}.$$
It follows from \eqref{6.2} and \eqref{6.9}   that
	\begin{align*}
		\frac{1}{n} |Dl||x'| =-l(x') & \leq   u(x',0)-l(x')  \\
		& \leq  |u(x, 0)-l(x')-Q(x', 0))|  +|Q(x',0)|  \\
		&  \leq \epsilon+C|x'|^2 \leq C|x'|^2 .
	\end{align*}
	Therefore $|Dl|\leq C\epsilon^{\frac{1}{2}}$ and \eqref{6.10} holds.

In following discussion, we will replace $u$ by $u-l(x')$ and need only to consider the case $P=Q$ in \eqref{6.9}. \eqref{6.2} and \eqref{6.9} implies that $c\I \leq D^2 Q \leq C\I$. For simplicity,
we further assume
$Q=Q_a$
(see \eqref{6.3}). This is because   we can always find  a new coordinate with the aid of the sliding transform $\B x :=x-2D''Q^{-1}D_{1i}Q x_1e_i $, under which $Q$ may be transformed to $Q_a$.

 Let
\begin{equation} \label{6.13}
  {u}_0:= u -P=u-Q, \ v_{\varepsilon}:= \frac{ {u}_0}{ \varepsilon   }, \ \  \varepsilon >0 .
\end{equation}
In fact, we may assume $ ||u-Q||_{L^{\infty}(S_1)}>0$ and take  $\varepsilon =||u-Q||_{L^{\infty}(S_1)} $.
 We will show the system  $(v_{\varepsilon},D_nv_{\varepsilon})$ could be well approximated by    $(w,D_nw)$, a solution  of some linearized problem.
More precisely, let $F= S_{\frac{1}{2}}^{Q}(0)$ and
\[	Lw:= \sqrt{1+a^2}(w_{11}+w_{nn})+ a w_{1n} +\Delta''w ,\] where  $\Delta'' :=\sum_{i=2}^{n-1}\partial_{x_i}^2$.
We will show that as if $\varepsilon \to 0^+$, the corresponding $(v_{\varepsilon},D_nv_{\varepsilon})$ will have a subsequence
  converging uniformly to  $(w,D_nw)$, a solution of the problem:
\begin{equation} \label {6.15}
	Lw= 0 \text{ in }  F,\ w(0)=0 ,\ D_nw= 0 \text{ on } \{ x_n=0\} .
\end{equation}

 \textbf{Step 1}.  We first consider the convergence on compact subset $E=\{ x_n \geq C\epsilon^{\frac{1}{2}} \} \cap F$, where $\epsilon>0$
 is sufficiently small.
 For any point $y$ such that $B_{C\rho}(y ) \subset E $  for some  $\rho >>c\epsilon^{\frac{1}{2}}$.
  Then $B_{c\rho}(y) \subset  S_{\rho^2}^{Q}(y) \subset B_{C\rho}(y ) $ by  the fact $Q=Q_a$.
 It follows from (6.1) that
\begin{equation} \label {6.16}
	0 =  \operatorname{det} D^{2}{u}-\operatorname{det} D^{2} Q=Tr  ( AD^2({u}-Q ))= \varepsilon Tr  ( AD^2 v_\varepsilon ),
\end{equation}
 where
\[  A =[A_{ij} ]_{n\times n}:= \int_0^1 cof ( ( 1-t )D^2Q+tD^2{u} ) dt,\]
and $cof M$ denotes the cofactor matrix of $M$.

Since \eqref{6.9} gives us $ |u-Q|\leq  \epsilon$ in $S_1$,  we see that
$$B_{2c\rho}(y ) \subset S_{\rho^2/2}^u(y) \subset B_{C\rho}(y ).$$
Hence, the classical interior estimate  (Lemma \ref{c2a classical estimate} and its proof in [3]) means that
\[   ||{u}||_{C^{k}(B_{c\rho}(y))}  \leq C  \rho^{2-k}  \text{ and } c\I \leq  D^2{u}\leq C\I \text{ in } B_{c\rho}(y).\]
Therefore, the operator $\bar L  f:=A_{ij} f_{ij}$ is uniformly elliptic with smooth coefficients. By  \eqref{6.16}, we obtain
\begin{equation} \label {6.17}
	||{v}_\varepsilon||_{C^{k}(B_{c\rho}(y))}\leq C\rho^{-k},
\end{equation}
which means
\[	| D^{2}{u}- D^{2} Q| \leq C\varepsilon \rho^{-2} \text{ and } | A- D^{2} Q|\leq C\varepsilon \rho^{-2}  \; \text{in}  \; B_{c\rho}(y)) .\]
Let $ \varepsilon \to 0^+$, we have $(v_\varepsilon,D_n{v}_\varepsilon)$ converges to $(w,D_nw)$  in $ B_{c\rho}(y)$.
%Since $$v_{\varepsilon}(0)=0\; \text{ and}\;  u=\varepsilon v_\varepsilon+Q_a$$
By (6.10), we see that
$$ w (0)=0, \ \  L  w=0 \; \text{ in }\;
  B_{c\rho}(y).$$

\textbf{Step 2}.  We gives the uniform control of $(v_\varepsilon, D_nv_\varepsilon)$ near $\{ x_n =0\}$, so as to   show
\begin{equation} \label{6.19}
 |D_n v_\varepsilon(x)| \leq C|x_n|  \text{ in } S_{\frac{1}{4}}(0)
\end{equation}
and
\begin{equation} \label{6.20}
 \operatorname{Osc}_{B_r(x)}v_\varepsilon \leq C \max\{r^{\frac{2}{3}},\epsilon\} \text{ for } x \in G_{\frac{1}{4}}.
\end{equation}

Given $y \in G_{\frac{1}{2}}(0) $.  It follows from  \eqref{6.9} and Corollary \ref{small perturb means gradient lemma}
%Lemma \ref{boundary nabla estimate} and \ref{interior nabla estimate} to the function
%\[ 0\leq  u(x)+2\epsilon -P(x_0)-\nabla P(x_0) \cdot (x-x_0) \leq C|x-x_0|^2+4\epsilon \]
%with the corresponding Neumann boundary value,
 that
$$|\nabla u-\nabla P|\leq C\epsilon^{\frac{1}{2}}\; \text{in}\; S_{\frac{1}{2}}(y)$$
if $\epsilon$ is small enough. Hence,
	the function $u^y(x)$ defined in \eqref{subtract support function} still satisfies
%\[|u^y(x)| \leq   1+C|y-x| \leq C \text{ in } S_{\frac{1}{4}}(y).\]
\[  	u^{y}(x) \geq c \text{ on } \partial S_{\frac{1}{4}}(y). \]
As in Lemma \ref{linearized problem},   $V :=D_n{v}_\varepsilon$ satisfies
 \begin{equation} \label{6.22}
\begin{cases}
U^{ij} D_{ij}(\pm V) =0        & \text{ in } S_{\frac{1}{4}}(0)\\
 |V |\leq C    & \text{ on } \partial S_{\frac{1}{4}}(0) \setminus G_{\frac{1}{4}}(0)  \\
 \pm V=  0                      & \text{ on }  {G_{\frac{1}{4}}(0)}
\end{cases}.
\end{equation}
Notice that the second in \eqref{6.22} is equivalent to  $V\leq C$ or $-V\leq C$, which comes from \eqref{6.9}.
  Similar to Corollary \ref{lip unversal by sup}, we see that  the function $	w^{y,+}(x) =C_1[	w^{y}(x) +C_2x_n]$ defined in \eqref{sup sub test function} is a supersolution
  for $V$ and  $-V$ in\eqref{6.22} respectively.

 Hence  we get
\[		|D_nv_\varepsilon(x)|=|V(x)|\leq w^{(x',0),+}(x',x_n) \leq Cx_n,  \text{ in } S_{\frac{1}{4}}(0).\]
 This roves  \eqref{6.19}.

 To prove \eqref{6.20}, we observe that
\eqref{6.19} implies
\begin{equation} \label{6.23}
|v_\varepsilon (x',0)-v_\varepsilon(x',x_n)|  \leq Cx_n^2 \text{ in } S_{\frac{1}{4}}(0).
\end{equation}
  It is enough to  estimate $|v_\varepsilon(p)-v_\varepsilon(q)|$ for $p=(p',0), q=(q',0)$ in $G_{\frac{1}{2}}(0)$. Suppose that
  $$|p'-q'| =r ,  \ \, \rho=C\max\{r^{\frac{1}{3}},\epsilon^{\frac{1}{2}}\},  \ \,
 A=p+\rho e_n,\ \ B=q+\rho e_n.$$
  As $\rho \geq C\epsilon^{\frac{1}{2}}$, \eqref{6.17} implies
\[ |v_\varepsilon(A)-v_\varepsilon(B)| \leq \frac{r}{\rho},\]
which, together with \eqref{6.23},  gives
\[ |v_\varepsilon(p)-v_\varepsilon(q)| \leq  C\rho^2 +\frac{r}{\rho} \leq C\max\{r^{\frac{2}{3}},\epsilon\}.\]
In summary, we have proved \eqref{6.19} and \eqref{6.20}. By this and the result of Step 1  we conclude that {\sl  as $ \varepsilon \to 0^+$,
 $(v_\varepsilon, D_n v_\varepsilon)$   has a subsequence  converging to a solution $w$ of problem \eqref{6.15} in $S_{\frac{1}{4}}(0)$ uniformly.}

\textbf{Step 3}. The classical regular results  then shows the function $w \in C_{loc}^3  $.
Since  $w(0)=0$, $D_nw(0)=0$ and $D'D_{n}w(0)=0$ by \eqref{6.15}, there exists  a linear function
 \[l^0(x) =\sum_{i=1}^{n-1} a_ix_i\]
and  a quadratic function
\[R(x)= \sum_{1\leq i,j \leq n-1} a_{ij}x_ix_j+b_nx_n^2\]
such that
\[	|w(x) -l^0(x)- R(x)| \leq C|x|^3  \text{ and }	 |D_n w(x) -2b_nx_n | \leq C|x|^2.\]
Here, $a_i,a_{ij},b_n$ are bounded by a universal constant, and
\[\sqrt{1+a^2}(a_{11}+b_ n)+ \sum_{i=2}^{n-1}a_{ii}= 0\]
by \eqref{6.15}.
As $\varepsilon\to 0^+ $, the uniform convergence means
\[	|v_\varepsilon (x) -l^0(x)- R(x)| \leq \sigma+ C|x|^3  \text{ and }	 |D_n v_\varepsilon (x) -D_nR | \leq \sigma+ C|x|^2,\]
where $\sigma =\sigma( \varepsilon ) \to 0^+  $. If we let $\mu \geq \sigma^{\frac{1}{3}} +\varepsilon ^{\frac{1}{3}}$,  then
\begin{equation} \label {6.25}
	| u- \left[Q+\varepsilon (l^0 +R) \right]|   \leq C \varepsilon \mu^{\frac{3}{2}} \text{ in }  S_{\mu}^{Q},
\end{equation}
and
\begin{equation} \label {6.26}
	|D_n u -(D_nQ+  \varepsilon D_nR)|  \leq C  \varepsilon \mu \text{ in }  S_{\mu}^{Q}.
\end{equation}

\textbf{Step 4}. Let
\[f(t):=\operatorname{det} \left[D^{2} Q+\epsilon D^2R+t \I \right].\]

Notice that $|f(0)-1| \leq C\epsilon^2$,  $f'(t) \sim C$. The equation $f(t)=1$ is solvable. The solution $t_0$ satisfies
\begin{equation}\label{6.27}
	|t_0| \leq C\epsilon^2.
\end{equation}
Let
\begin{equation}\label{6.28}
	 Q^0 (x)= Q(x)+ \epsilon R(x) +\frac{t_0}{2}|x|^2, \ P^0=\epsilon l^0+Q^0.
\end{equation}
%Then $Q^0 $ satisfies \eqref{6.15}.  Also,
Then \[\operatorname{det} D^{2}P^0=1,\ D_nP^0=D_nQ= ax_1\text{ on } \{ x_n=0\}, \]
which means that $P^0 \in \F_{a}$.

Finally, we combine \eqref{6.25},  \eqref{6.26},  \eqref{6.27} and \eqref{6.28}  to obtain
\[ |{u}(x)-P^0(x)| \leq  C\epsilon^2 \mu +C \epsilon \mu^{\frac{3}{2}} \leq   C \epsilon \mu^{\frac{3}{2}}  \text{ in }  S_{\mu}^{Q^0}, \]
and
\[ |D_n{u}(x)-D_n P^0(x)| \leq  C\epsilon^2 \mu^{\frac{1}{2}} +C \epsilon \mu \leq   C \epsilon \mu  \text{ in }  S_{\mu}^{Q^{0}}. \]
Noticing that $S_{\mu/2}^{Q_0} \subset S_{\mu}^{Q} $ and replacing $\mu$ with $\mu/2$, we have proved the desired \eqref{6.11}, while
\eqref{6.12} follows directly from \eqref{6.28}.
\end{proof}

{\bf  Proof of Theorem \ref{stationary thm}: }
 Let $C, \mu, $ and $ \epsilon_0(\mu)$ are the same as  in Lemma \ref{station lemma}.
Choose $0<\alpha<1$
and $0< \mu \leq (4C)^{\frac{2}{\alpha-1}}$ . Let
  $$C_{\infty}= \max\{\Pi_{j=0}^{\infty}(1+4C^2\mu^{\frac{j\alpha}{2}}), e^{4C}\}.$$
   Observing that
\[ \log C+ \Sigma_{j=0}^{k}\log (1+4CC_{\infty}^2\epsilon_0\mu^{\frac{k\alpha}{2}-2}) \leq 2C+\epsilon_0C(\mu) C_{\infty}^3\leq \log C_{\infty} , \]
we can assume that $\epsilon_0$ is sufficiently small such that the sequence
\[ C_0=C, \ C_k =(1+4CC_{\infty}^2\epsilon_0\mu^{\frac{k\alpha}{2}-2}) C_{k-1}, k=1, 2, 3, \cdots \]
is bounded by $C_{\infty}$.

We will prove by mathematical induction that
there exists a sequence of constants $a_k$, transformations
\[\M_k=\operatorname{diag}\{ \M_k',\M_{k,nn}\}, \ \det  {\M_k} =1\]
(where $\M_k'$ is a $n-1$ square matrix),  and linear functions $l_k=\sum_{i=1}^{n-1} a_i^kx_i$
such that at height $h_k:=\mu ^k$,  the  normalization solution $(u_{k}, \Omega_{k} )$ of $(u, \Omega)$  given by
\begin{equation} \label{22} 	
u_{k}(x)= \frac{u( \T_k x)}{h_{k}},\ x \in  \Omega_k:= \T_k^{-1} S_{h_k}(0), \  \T_k = h_k^{\frac{1}{2}}\M_k
\end{equation}
satisfying problem (6.1) and
%\[	C^{-1}h_k^{\frac{1+\epsilon}{2}}  \I \leq  \T_k \leq  C h_k^{\frac{1-\epsilon}{2}} \I ,\]
%and
\begin{equation} \label{23}
 ||u_k-l_k-Q_{a_k}||_{L^{\infty}(S_1^{P_{k}})} +||D_nu_k-D_nQ_{a_k}||_{L^{\infty}(S_1^{P_{k}})} \leq \epsilon_k:= \epsilon_0 \mu^{\frac{{k\alpha}}{2}},
\end{equation}
%\[\E(u,h_k) \leq	\operatorname{dist}_1(u_k, P_{k},0) \leq \epsilon_k:= \epsilon_0 \mu^{\frac{{k\alpha}}{2}}\]
for $P_k=l_k+Q_k \in \F_{a}$, where
\[ 	Q_k(x):= \frac{1}{2}\left[\sqrt{1+a_k^2}(x_1^2+x_2^2)+2a_kx_1x_2+\sum_{i=2}^{n-1}x_i^2\right].\]
Moreover,
 \begin{equation} \label{24}
\begin{cases}
& |b_{k-1}| \leq C(C_{\infty})\epsilon_{k-1}^{\frac{1}{2}}, \ \ |a_k| \leq C_k, \ \  |{\M}_k{\M}_{k-1}^{-1}-\I|\leq C_k\epsilon_{k-1}\\
 &  C_k^{-1}B_1(0) \cap \Omega_k \subset S_1^{u_k}(0) \subset C_kB_1(0).
\end{cases}.
\end{equation}

The case $k=0$ is the assumption of Theorem 6.1. If the case of $k$ holds we will prove it for the case $k+1$. We check for $ \epsilon_k$  whether $u_k $ satisfies the conditions in
Lemma 6.3. By induction hypothesis (6.23), we have
\[	|u_{k}-P_k| \leq \epsilon_{k} \text{ in } \Omega_{k} \cap S_1^{P_k}.\]
%Thus,
%\[|Q^{0,h_{k}}({\T}_{k} \T_{h_{k}}^{-1}x)-Q_{0}| \leq C\epsilon_{k} \text{ in } \text{ in }  \Omega_{k} \cap S_{1/2}^{P_k}\]
%i.e.
%\[	2^{-1} \I \leq \P_k:= {\D}_{k-1} \D_{h_{k}}^{-1} \leq 2  \I.\]
%Therefore, the slide normalization  $(u_{k}, \Omega_{k} )$ also satisfies \eqref{8 2 2}.
Apply  lemma 6.3  to $u_k$, we obtain the quadratic function $  \bar P_{k+1}=\bar l_{k+1}+\bar Q_{k+1} \in \F_{a}$ satisfies
 \begin{equation} \label{25}
\begin{cases}
& \operatorname{dist}_{\mu}(u_k,\bar P_{k+1},0)  \leq C{\epsilon}_k \mu^{\frac{3}{2}} \leq \epsilon_{k+1} \mu, \\
 &  |b_k| \leq C\epsilon_{k}^{\frac{1}{2}} \;  \text{ and } \;  |D^2Q_k-D^2 \bar Q_{k+1}|\leq C{\epsilon}_k  .
 \end{cases}.
\end{equation}
Then there exists $a_k$ and  positive  matrix $\B_{k+1} =\operatorname{diag}\{ \B_k',\B_{k,nn}\}$ satisfies
\[\B_{k+1}^TD^2Q_k \B_{k+1} = D^2 Q_{k+1}\]
and
\[ |a_{k+1} -a_k| \leq CC_k \epsilon_k  \text{ and } |\B_{k+1}-\I| \leq CC_{k}{\epsilon}_k .\]
In particular,  $|a_{k+1}| \leq  C_{k+1}$.
Take  $\M_{k+1}=\M_k\B_{k+1}$, $\T_{k+1}= \mu^{\frac{1}{2}}\T_k\B_{k+1}$ and let
\[	u_{k+1}(x)= \frac{u({\T}_{k+1} x)}{h_{k+1}},\  S_1^{u_{k+1}}(0)=  \T_{k+1}^{-1} S_{h_{k+1}}(0), \ h_{k+1}= \mu^{{(k+1)\alpha}}.\]
Then we have
\[ 	(1-CC_{\infty}^2\mu^{-1}\epsilon_{k})C_k^{-1} B_1(0) \cap \Omega_{k+1}
\subset S_1^{u_{k+1}}(0)
\subset  (1+CC_{\infty}^2\mu^{-1}\epsilon_{k})C_k B_1(0) ,\]
which is
\[	C_{k+1}^{-1}B_1(0) \cap \Omega_{k+1} \subset S_1^{u_{k+1}}(0) \subset C_{k+1}B_1(0).\]
 Using the above facts and (6.25), one may verify by direct calculations that (6.23)-(6.24) holds for $k+1$.

It follows that $|a_k| \leq C_k$, $|\M_k| \leq C_k$  converge geometrically to $a_{\infty}$, $\M_{\infty} $ and
\[ 	|\M_{\infty}\M_{k}^{-1}-\I| \leq C_k \epsilon_{k} \leq C_{\infty}h_k^{\frac{{\alpha}}{2}}.\]
We replace each $\M_{k}$ by $\M_{\infty}$ and $Q_k $ by $ Q_{\infty}$, the inductive hypothesis still holds with a large universal constant. This is
\[	|u(x ) -h_k^{\frac{1}{2}} l_k(\M_{\infty}^{-1} x)-Q_{\infty}(\M_{\infty}^{-1}x )|  \leq C_1C_{\infty}^2h_k^{1+\frac{{\alpha}}{2}} \text{ in } B_{ch_k^{\frac{1}{2}}}(0).\]
 Recalling that $|Dl_k|\leq C\epsilon_{k-1}^{\frac{1}{2}}$, we obtain
\[	|u(x ) -Q_{\infty}(\M_{\infty}^{-1}x )| \leq  C_1C_{\infty}^2h_k^{1+\frac{{\alpha}}{2}}  \text{ in } B_{ch_k^{\frac{1}{2}}}(0),\]
which  means
\begin{equation} \label {26}
	|u(x ) -Q_{\infty}(\M_{\infty}^{-1}x )| \leq  C_1C_{\infty}^2|x|^{2+\alpha } .
\end{equation}

The above induction argument   and Lemma 6.3 could be applies to any point $y\in G_{c_0}(0)$. As a result, we may obtain that
\begin{equation*}
 |u(y+x ) -Q^y( x )| \leq  C|x|^{2+{\alpha}}
\end{equation*}
for a quadratic polynomial depending on the based point $y$.  In fact, consider  $p=y+\theta e_n$    with $\theta >0$ is small enough.
 The height section $S_{c_1\theta^2}^{Q^y}( p) $ is contained in $S_1$ with
\[  B_{c\theta^2}(p )  \subset S_{c_1\theta^2}^{Q^y}( p)  \subset B_{C\theta^2}(p ). \]
 By (6.26) we have
$$\begin{cases}
	\operatorname{det} D^{2}u (x )  = 1      & \text{ in } S_{c_1\theta^2}^{Q^y}( p)\\
	|u-Q^y(x ) |\leq C|\theta|^{2+\alpha}                  & \text{ on } \partial S_{c_1\theta^2}^{Q^y}( p )
\end{cases}. $$
By interior $C^{2,\alpha}$  theory and its perturbation method (See [3] for the details), $ ||u||_{C^{2,\alpha}(B_{c\theta^2}(p ))}\leq C$.
Since the constant $C$ is independent of $y$ and $\theta$,      $u \in C^{2,\alpha}(\overline{B_{c_0}^+(0)})$ for  a small $c_0>0$.

\vskip 0.5cm
{\bf  Proof of Theorem \ref{zero petrub neumann}: }
Choose $\delta_0=b_0^{6n}$ and $c_0= cb_0$, where $\alpha, \mu, $ and $b_0$ are the same as in the proof of Theorem \ref{stationary thm}.
Take $a\in (-\delta_0,   \delta_0)$. We are going to  construct a quadratic approximation of $u $ near $ 0 $ as \eqref{6.4}, and  applying
Theorem \ref{stationary thm} to complete the proof of Theorem \ref{zero petrub neumann}.

Lemma \ref{gs cover lemma}  implies
\[  \P S_1(0)   \subset  (1+C\delta_0)G_1(0).\]
For $(x', 0)\in PS_1(0)$, let $g( x' )$ be the maximum/minimum value such that $( x',g( x' ) ) \in S_1(0)$. We consider the convex set
\[ E=\{ ( x' , x_n ) |  (x', 0)\in PS_1(0), 0 \leq x_n \leq   g( x' )\},\]
and
\[ S=\{ ( x' , \pm x_n ) | ( x' , x_n ) \in E\}.\]
Then
\[\partial S_1 \setminus G_1 \subset E_{C\delta_0} =\{x \in \R ^n| \operatorname{dist}( x, \partial E )  \leq C\delta_0 \}. \]
 By the standard theory(see \cite{[F], [G]} for example) we see that
there is  a convex function $v$ symmetric about the hyperplane  $\{x_n=0\}$  solving
\[\begin{cases}
	\operatorname{det}  D^2 v = 1     & \text{ in }  S\\
	v= 1                  & \text{ on } \partial S \\
\end{cases}.\]
Moreover, by \eqref{6.2} and Lemma \ref{c2a classical estimate}, $v$ is smooth in the interior and Lemma \ref{c1/n boundary estimate} tells us
\[ v( x ) \geq  1-C\operatorname{dist}( x, \partial S )^{\frac{1}{n}}.\]
Thus, $v$ is also a solution to the following Neumann problem
\begin{equation}  \label{6.38}
\begin{cases}
\operatorname{det}  D^2 v = 1       & \text{ in }  S_1\\
1-b_0^6 \leq v \leq 1                  & \text{ on } \partial S_1 \setminus G_1  \\
D_nv= 0                 & \text{ on } G_1\\
\end{cases}.
\end{equation}
Since $|a|<\delta_0$ and $b_0>0$ is small,
then $v^{\pm}:=v\pm 2b_0^6(2C-x_n)$ must be supersolution/subsolution of $u$ to \eqref{6.1}, which implies
\begin{equation}  \label{6.39}
 ||v-u||_{L^{\infty}(S_1)} \leq   C b_0^6
\end{equation} by the comparison principle (Lemma \ref{comparison principle for mixed problem}).
Let $w=u-v$, by Corollary \ref{small perturb means gradient lemma} we have
\begin{equation}  \label{6.40}
 |\nabla w| \leq Cb_0^{3} \text{ in } B_{c}^+(0).
\end{equation}
%In fact,  $v \in C^3(B_c(0))$, and by the convexity and \eqref{6.39} we have
%to the function
%\[ U^y= u(x)-v(y) -\nabla v(y) \cdot (x-y)+Cb_0^3.\]
%
%\begin{equation}  \label{6.41}
%\begin{split}
% \nabla w(y) \cdot (y-x)  &= \nabla u(y) \cdot (y-x)- \nabla v(y) \cdot (y-x)  \\
% & \leq u(y)-u(x)-\nabla v(y) \cdot (y-x)\\
% & \leq u(y)-u(x)-v(y)+v(x) +C|y-x|^2\\
% & \leq Cb_0^6+C|y-x|^2, \ \ \forall y, x\in  B_{c}^+(0).
%\end{split}
%\end{equation}
%For $1\leq i \leq n-1$, we take $y=x+t_ie_i$ such that $|t_i|= b_0^{3}$ and $D_i w(y) t_i \geq 0 $.
%Then using  \eqref{6.41} and choosing a suitable $x$ for given $y$, we obtain
%\[ |D_i w(y)| \leq  Cb_0^{3},  \forall y \in  B_{c}^+(0)  .\]
%Similarly, we can get
%\[ |D_n w(y)| \leq  Cb_0^{3},  \forall y \in  B_{c}^+(0) , y_n\geq Cb_0^3.\]
% For the case when  $y_n \leq  Cb_0^{3} $, recalling the discussion in Step 2 of Theorem \ref{station lemma} (see \eqref{6.19} and its proof), one can similarly prove that
% $$|D_nu(y)-ay_n| \leq Cy_n , \ \  |D_nv(y)| \leq Cy_n, $$  therefore $|D_n w(y) |\leq  Cb_0^{3}$.  This proves \eqref{6.40}.

By \eqref{6.38} we have $D_{in}v=0$ for $i\neq n$ in $G_1$. Let
\[\M =  D_{ij}v(0),  \ \  R(x)=x^T\M x , \ \ \M=\B^2,\]
where $\B=(b_{ij})_{n\times n}$  may be chosen such that it is bounded symmetric matrix with $b_{in}=0$ for $i\neq n$
because of the same property of $\M$. Consider the diagonal matrices $\B_1$ defined by
$$ \B_1^2=[\frac{1}{2}diag\{\sqrt{1+a^2}, I'' , \sqrt{1+a^2}\}]^{-1}$$ and the function
\[
\tilde{u}(x) = \frac{   u(b_0 (\B\B^{-1})x)}{b_0^2}, \\ Q(x)=\frac{Q_a(b_0 (\B \B_1)^{-1}x)}{b_0^2}, \ x   \in B_C(0),\]
where $Q_a$ is the same as in (6.3).
Letting $E_0= B_{Cb_0}(0)$ and noticing   $v \in C^3(B_c(0))$, we see that
\begin{align*}
	 || \tilde{u} -Q||_{L^{\infty}(E_0)} & \leq  \frac{1}{b_0^2}||u-R||_{L^{\infty}(E_0)}
	+C|a| \\
	& \leq  \frac{1}{b_0^2}||u-v||_{L^{\infty}(E_0)}+Cb_0 +C|a| \\
	& \leq Cb_0
\end{align*}
and
\begin{align*}
	||D_n \tilde{u} - D_n Q||_ {L^{\infty}(E_0)} & \leq \frac{1}{b_0}||D_nu-x_nD_{nn}v||_{L^{\infty}(E_0)}+C|a|  \\
	& \leq \frac{1}{b_0}||D_nu-D_n v||_{L^{\infty}(E_0)}+Cb_0+C|a|  \\
	& \leq Cb_0.
\end{align*}
By Theorem \ref{stationary thm},  we have $\tilde{u} \in C^{2,\alpha}(\overline{B_c^+(0)}) $, therefore $u\in C^{2,\alpha}(\overline{B_{c_0}^+(0)})$ for $c_0=cb_0$.

\section{Universal Strict Convexity}\

Throughout this section, we assume that $u$ is a viscosity solution to problem \eqref{liouville problem}, satisfying \eqref{growth condition} (or \eqref{growth innitial assump}), \eqref{section innitial assump} and \eqref{u=0 nabla u =0}.
By \eqref{section innitial assump} and Theorem \ref{good shape theorem} we may assume
\begin{equation}\label{7.1}
	B_{c}^+(0) \subset S_1(0) \subset B_{C}(0).
\end{equation}
  We will study the geometry of  the section $S_R(0)$  for large $R$.  In particular, we will show the uniformly strict convexity for the normalization family $(u_R,S_1^R)$
  defined by \eqref{normal function}.

Recall the notations from the beginning in Section 5 to \eqref{normalization tau definition}.
 When we fix a  $R$, we will use the notation $\A _R$, $d_i(R)$ and $\D _R$ instead of $\A _h$, $d_i(h)$ and $\D _h$.
    We define the section $S_t^R(0)$ of $(u_R, S_1^R)$ as we defined $S_1^h(0)$ of $(u, S_h)$  by \eqref{normalization tau definition} in Section 5. Set
    \[ d_i^R(t) = \frac{d_{i}(tR)}{d_i(R)}, \ \D_t^R=\D_{tR}(\D_R)^{-1} \text{ and } {\A}_{t}^{R}=(D_R)^{-1} \A_{tR} (\A_R)^{-1} D_{R},\]
     The normalization of $(u_R,S_1^R)$ at height $t$ is essentially the same as $(u_{tR},S_1^{tR})$:
\[	{u}_{tR}(x):= \frac{u_R( (\A_t^R)^{-1} \D_t^R  x)}{t}, \ x \in S_1^{tR}:= (\D_t^R)^{-1} \A_t^R S_1^R.\]

 Here is the main result of this section.

  \begin{Theorem} \label{uniformly strict convex thm}
	Given  large constants $R_0 >0$  and $C_0>1$. There exists a small constant $\eta >0$ such that for any  $R\geq R_0$,
	\begin{equation}\label{7.3}
		  C_0  \eta S_{R}(0)   \subset S_{\eta R}(0)  \subset (1-\eta) S_{R}(0).
	\end{equation}
Moreover, there exist $\bar{\delta}=\delta_{R_0}$ and $\gamma \in (0,1)$ such that
	\begin{equation}\label{7.4}
 c|x|^{\frac{1+\gamma}{\gamma}}-C\delta_R/\bar{\delta} \leq u_R(x) \leq C|x|^{1+\gamma}+C\delta_R/\bar{\delta}, \ \ \forall x\in S_1^R,
\end{equation}
where $\delta_R$ denotes a constant depending on $R$ and tends to $0$ as $R \to \infty$.
\end{Theorem}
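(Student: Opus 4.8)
The plan is to prove the pair of inclusions \eqref{7.3} first, and then deduce the two‑sided power bound \eqref{7.4} from \eqref{7.3} by iteration across scales. Applying $\A_R\D_R^{-1}$, \eqref{7.3} is equivalent to $C_0\eta\,S_1^R\subset S_\eta^{R}(0)\subset(1-\eta)S_1^R$ for all $R\ge R_0$. Since a section of $u$ at height $\eta R$ is itself a section of $u$, the first inclusion iterates to $(C_0\eta)^kS_R(0)\subset S_{\eta^kR}(0)$ and the second to $S_{\eta^kR}(0)\subset(1-\eta)^kS_R(0)$, as long as $\eta^{k-1}R\ge R_0$. Transferring to $u_R$ via $\A_R\D_R^{-1}$ and using $B_c^+\subset S_1^R\subset B_C$ from Theorem \ref{good shape theorem}, one reads off $u_R<\eta^k$ on $(C_0\eta)^kB_c^+$ and $u_R(x)<\eta^k\Rightarrow|x|<(1-\eta)^kC$. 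Comparing $|x|$ with $\eta^k$, stopping the iteration at the last $k$ with $\eta^kR\gtrsim R_0$ and for smaller $|x|$ absorbing the value of $u_R$ (then $\lesssim R_0/R=\delta_R/\bar\delta$) into the additive error, gives exactly \eqref{7.4} with $1+\gamma=\log\eta/\log(C_0\eta)>1$ for the upper bound and $(1+\gamma)/\gamma=\log\eta/\log(1-\eta)$ for the lower; after shrinking $\gamma$ and adjusting the constants a single $\gamma\in(0,1)$ serves both. So it remains to prove \eqref{7.3}. Because $R_0$ is taken large, every height in $[\eta R_0,\infty)$ lies in the range where Theorem \ref{good shape theorem} and \eqref{normal equation} apply, and these are used freely below.

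\textbf{Proof of \eqref{7.3} by compactness.} Suppose \eqref{7.3} fails; then there are $\eta_j\downarrow0$ and $R_j\ge R_0$ so that for each $j$ one of the two inclusions fails for $(u_{R_j},\eta_j)$. By Theorem \ref{good shape theorem}, $(u_{R_j},S_1^{R_j})$ solves \eqref{normal equation} with $B_c^+\subset S_1^{R_j}\subset B_C$, $0\le a_{R_j}\le C$, $a_{R_j}^2\operatorname{det}\Q_{R_j}\le C$, and $\delta_{R_j}=K/R_j\le\bar\delta$. Pass to a subsequence with $a_{R_j}\to a_*\in[0,C]$. If $a_*=0$, then $a_{R_j}<\delta_0$ for large $j$ and Theorem \ref{zero petrub neumann} gives $\|u_{R_j}\|_{C^{2,\alpha}(\overline{B_{c_0}^+})}\le C$; a subsequence converges in $C^2$ near $0$ to a convex $u_*$ with $\operatorname{det}D^2u_*=1$, $D^2u_*\le CI$ (hence, since $\operatorname{det}=1$, $D^2u_*(0)\ge cI$), $u_*(0)=0$, $\nabla u_*(0)=0$, so $c|x|^2\le u_*(x)\le C|x|^2$ near $0$, and the same for $u_{R_j}$. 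Together with $B_c^+\subset S_1^{R_j}\subset B_C$ this forces $C_0\eta S_1^{R_j}\subset S_\eta^{R_j}(0)\subset(1-\eta)S_1^{R_j}$ for every small $\eta$, in particular $\eta=\eta_j$ with $j$ large — contradicting the failure.

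\textbf{The case $a_*>0$.} Here $\operatorname{det}\Q_{R_j}\le C/a_{R_j}^2\le C$ for large $j$, and since the eigenvalues of $\Q_{R_j}$ are bounded below by \eqref{growth innitial assump}, $cI\le\Q_{R_j}\le CI$, so $u_{R_j}(0,z,0)\le C|z|^2+\delta_{R_j}$. With Lemma \ref{boundary nabla estimate} and good shape, $(u_{R_j})$ is locally equi‑Lipschitz and a subsequence converges locally uniformly to a convex $u_*$ on $\Omega_*:=\lim S_1^{R_j}$, solving $\operatorname{det}D^2u_*=1$, $D_nu_*=a_*x_1$ on $\Omega_*\cap\partial\R_+^n$, $u_*=1$ on $\partial\Omega_*\setminus\partial\R_+^n$, with $u_*(0)=0$, $\nabla u_*(0)=0$, $u_*\ge0$, and $u_*(0,z,0)\le C|z|^2+\delta_*$ where $\delta_*\le\bar\delta$. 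The failure of an inclusion is then turned, in the limit, into a point $y_*\in\Omega_*\cap\partial\R_+^n$ with $y_*\ne0$ and $u_*(y_*)=0$: when $S_{\eta_j}^{R_j}(0)\not\subset(1-\eta_j)S_1^{R_j}$ this is immediate (giving $y_j$ with $|y_j|\ge c$ and $u_{R_j}(y_j)<\eta_j$), while when the first inclusion fails one first uses the volume estimate of Lemma \ref{volume bound by measure} together with good shape at heights $R_j$ and $\eta_jR_j$ to exclude the case that the section is merely misaligned, and otherwise again reaches such a $y_*$. Since $u_*\ge0$ is convex and vanishes at $0$ and $y_*$, it vanishes on the segment $[0,y_*]\subset\partial\R_+^n$. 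Rotating the $\R^{n-1}$‑coordinates so this segment lies on the $x_1$‑axis, localizing near its midpoint $m$ (which stays a universal distance from $\partial\Omega_*$), and using that $u_*\le C|x''|^2+\delta_*$ is base‑point independent by convexity, a rescaling of $u_*$ on $B_\rho^+(m)$ satisfies the hypotheses of Lemma \ref{strict convex lemma plane} with additive constant $C\bar\delta$, which is below the threshold $\sigma$ once $R_0$ is large. That lemma makes $\sup u_*$ on $B_\rho^+(m)$ as large as we wish, contradicting $u_*\le1$. This proves \eqref{7.3}, and with the iteration of the first part, the theorem.

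\textbf{Main obstacle.} The delicate point is the case $a_*>0$: converting the failure of the \emph{first} inclusion — where a section has shrunk too much in some direction rather than grown too much — into the boundary segment above, which requires playing the good‑shape volume bound at two different heights against the compactness; and verifying that the limit $u_*$ genuinely meets the hypotheses of Lemma \ref{strict convex lemma plane}, in particular that the residual quadratic‑growth constant $\delta_*$ lies below the strict‑convexity threshold, which is precisely where the largeness of $R_0$ (smallness of $\bar\delta=K/R_0$) is used. Throughout, one must keep straight which constants are permitted to depend on $R_0$, and hence the exact range of heights on which Theorem \ref{good shape theorem} is available.
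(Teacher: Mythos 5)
Your reduction of \eqref{7.4} to \eqref{7.3} by iteration matches the paper's Claim \ref{iteration discussion}, and your use of Theorem \ref{zero petrub neumann} to dispose of the case $a_R\to 0$ matches Lemma 7.3. But the core of the theorem is not established.

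The main gap is the \emph{first} inclusion of \eqref{7.3}. Its failure at scale $\eta_j$ produces a point $w_j=C_0\eta_j z_j\to 0$ with $u_{R_j}(w_j)\ge\eta_j\ge c|w_j|/C_0$, i.e.\ \emph{linear growth} of $u_{R_j}$ in some direction at vanishing scale. This is the opposite degeneracy from a vanishing segment: if $u_{R_j}$ behaved like $|x\cdot e|$ near $0$ in a direction $e$, the first inclusion fails for small $\eta$ while $\{u_*=0\}=\{0\}$, so no point $y_*\ne 0$ with $u_*(y_*)=0$ ever appears. Your fallback sentence (``volume estimate \dots to exclude the case that the section is merely misaligned, and otherwise again reaches such a $y_*$'') therefore cannot work as stated. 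Moreover the naive volume comparison only yields $d_i(\eta R)/d_i(R)\ge c\eta$ (since the product of the ratios is $\sim\eta$ once the $x''$-ratios are pinned at $\eta^{1/2}$, and each ratio is $\le C$), which is exactly the linear rate one must \emph{beat} to get the factor $C_0\eta$ with arbitrary $C_0>1$. The paper's mechanism is genuinely quantitative: the strict-convexity \emph{upper} bound $\max\{d_1^R(t),d_n^R(t)\}\le Ct^{\gamma/(1+\gamma)}$ (Step 1--2, from Lemmas \ref{strict convex lemma half}--\ref{strict convex lemma plane}) is fed back through the volume identity $\Pi_i d_i^R(t)=t^{n/2}$ and \eqref{7.5} to produce the \emph{lower} bound $\min\{d_1^R(t),d_n^R(t)\}\ge ct^{1/(1+\gamma)}\gg C_0t$, and a separate balance argument (Step 4, using \eqref{5.13} and iteration in $t$) handles the tangential $e_1$ direction. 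None of this duality appears in your proposal, and without it the differentiability of $u_R$ at $0$ --- which is what the first inclusion encodes --- is unproved.

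A secondary problem lies in your treatment of the vanishing segment itself. After ``rotating'' so that $[0,y_*]$ lies on the $x_1$-axis, Lemma \ref{strict convex lemma plane} (or \ref{strict convex lemma half}) requires $u_*\le\sigma+M_1|x''|^2$ in \emph{all} transverse tangential directions at the midpoint. When the segment direction mixes $e_1$ with the $x''$-plane, one of the new transverse directions carries a component of the original $e_1$, along which you only know Lipschitz growth --- and $C|x|\not\le\sigma+M_1|x|^2$ on an intermediate range. A rotation also destroys the quadratic-growth structure (only sliding transforms preserve it, which is why the paper uses $\B x=x-\sum(q_i/q_1)x_1e_i$ and claims growth only in $|x_1|$ and $|x_n|$). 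Segments lying in the $x''$-directions must instead be excluded by the \emph{lower} quadratic bound $u_R(0,z'',0)\ge c|z''|^2-\delta_R/\bar\delta$ of Remark 7.4, which requires the two-sided estimate \eqref{7.5} on $\D_R''$ at all scales; you establish only the upper bound $u_{R_j}(0,z,0)\le C|z|^2+\delta_{R_j}$. So even the second inclusion, where your compactness idea is closest to the paper's Step 1, needs these additional ingredients before the strict-convexity lemmas apply.
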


\begin{Claim}\label{iteration discussion}
\eqref{7.3} is a scaling invariant property. Using iterative techniques, one can see that  \eqref {7.3} and \eqref {7.4} are equivalent.
More precisely, regardless of the term $ C\delta_R/\bar{\delta}$, the left side of \eqref{7.3} and the right side of \eqref{7.4} are equivalent to
\[ u_R(x) \leq \sigma_1(|x|)|x| \text{ for all large } R,\]
and the right side of \eqref{7.3} and the left side of \eqref{7.4} are equivalent to
\[ u_R(x) \geq \sigma_0(|x|) \text{ for all large } R, \]
where $\sigma_0, \sigma_1 $ are some universal strictly increasing $C^0$ function with $\sigma_i(0)=0$ $(i=0, 1)$.

The right side of \eqref{7.3} implies $S_{\eta^{k}} ^{\eta^{-k}R} \subset B_{(1-\eta)^k C}(0)$. Thus,  if we need  to obtain the global type estimate of $u_R$ in $ S_1^R$, we only need to prove the corresponding local estimate of $u_{\eta^{-k}R} $ in $S_{\eta^{k}} ^{\eta^{-k}R} $ for some universal large $k$. Especially,  we can obtain that  $D_nu_R$ is uniformly bounded in $S_1^R(0)$.

 By Lemma \ref{boundary nabla estimate} and the Neumann boundary value condition, the right side of \eqref{7.4} provides a universal $C^{1,\gamma}$ module for $u_R$ at $0$.
\end{Claim}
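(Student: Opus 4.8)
The plan is to set up a dictionary between the section-inclusion statement \eqref{7.3} and the pointwise growth statement \eqref{7.4}, and to make the iteration across scales precise. The first point is that the renormalization of Section 5 is scale-covariant: renormalizing $(u_R,S_1^R)$ at height $t$ produces exactly $(u_{tR},S_1^{tR})$, a member of the same family, and by \eqref{normal genral assump} the data of problem \eqref{normal equation} obey bounds uniform for $R\ge R_0$. Hence, after applying the linear map $\D_R^{-1}\A_R$ (which fixes $0$) normalizing $S_R$, \eqref{7.3} becomes the scale-free assertion $C_0\eta\,S_1^R\subset S_\eta^R\subset(1-\eta)S_1^R$, which involves only the uniform pair $(u_R,S_1^R)$; this is precisely what makes \eqref{7.3} a scaling-invariant property, and it permits replacing $R$ by $\eta^{-j}R$ freely in \eqref{7.3} for any $R\ge R_0$. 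The second point is the trivial translation: since the support plane of $u_R$ at $0$ vanishes by \eqref{6.2}, one has $u_R(x)<t\iff x\in S_t^R(0)$, and by \eqref{normal section} every $S_t^R$ is comparable to a ball; so an inclusion between sections is equivalent, up to universal constants, to a two-sided bound of $u_R$ on balls.

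Next the plan is to iterate. Chaining the \emph{right} inclusion of \eqref{7.3} through the scales $\eta^{-1}R,\dots,\eta^{-k}R$ (legitimate by scale invariance, since each $\eta^{-j}R\ge R_0$) gives $S_R(0)\subset(1-\eta)^kS_{\eta^{-k}R}(0)$; applying the linear map normalizing $(u,S_{\eta^{-k}R})$, which sends $S_R(0)$ to the height-$\eta^k$ section $S_{\eta^k}^{\eta^{-k}R}$ and $S_{\eta^{-k}R}(0)$ into $B_C(0)$, turns this into $S_{\eta^k}^{\eta^{-k}R}\subset B_{(1-\eta)^kC}(0)$, the stated consequence of the right side of \eqref{7.3}. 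Read as a statement about $u_R$ itself, the same chain says $u_R(x)\ge\eta^k$ whenever $|x|\ge C(1-\eta)^k$; interpolating over $k$ produces a universal strictly increasing modulus $\sigma_0$ with $\sigma_0(0)=0$ such that $u_R\ge\sigma_0(|x|)$ in $S_1^R$ for all large $R$, which, written as a power bound with exponent $\log\eta/\log(1-\eta)>1$ and then weakened to the form $c|x|^{(1+\gamma)/\gamma}$, is the left side of \eqref{7.4}; conversely such a lower bound forces $S_{\eta^k}^R\subset B_{\sigma_0^{-1}(\eta^k)}(0)$, and choosing $\eta$ with $\sigma_0^{-1}(\eta)<1-\eta$ recovers the right inclusion of \eqref{7.3}. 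The \emph{dual} chain, built from the \emph{left} inclusion of \eqref{7.3} (which propagates $B_{cC_0\eta}^+(0)\subset S_\eta^R$ to smaller scales), gives $u_R(x)\le\eta^k$ on a ball of radius comparable to $(cC_0\eta)^k$, hence $u_R(x)\le\sigma_1(|x|)|x|$ with $\sigma_1(0)=0$, equivalently $u_R(x)\le C|x|^{1+\gamma}$ with $\gamma\in(0,1)$ the small exponent produced by the iteration (so that $1+\gamma\le\log\eta/\log(C_0\eta)$) --- the right side of \eqref{7.4} --- and a bound of this type reproduces the left inclusion of \eqref{7.3}. This yields the claimed four-way equivalence. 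The additive correction $C\delta_R/\bar{\delta}$ enters because \eqref{7.3} carries an implicit $O(\delta_R)$ slack (inherited from the approximation underlying Theorem \ref{uniformly strict convex thm}); summing the geometric series of these defects over the finitely many steps until $\eta^k$ drops below $\delta_R$, and normalizing by $\bar{\delta}=\delta_{R_0}$, reproduces exactly that term in \eqref{7.4}.

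For the remaining assertions, the geometric decay $S_{\eta^k}^{\eta^{-k}R}\subset B_{(1-\eta)^kC}(0)$ means the linear map carrying $S_1^R$ onto $S_{\eta^k}^{\eta^{-k}R}$ compresses $S_1^R$ into a neighborhood of $0$ shrinking geometrically in $k$; so any bound proved for $u_{\eta^{-k}R}$ on a fixed small ball about the origin transfers, after undoing the normalization, to a bound for $u_R$ on all of $S_1^R$ --- the local-implies-global reduction, with $k$ chosen universal. Applying this with the upper bound from the right side of \eqref{7.4}: $0\le u_R\le C$ on $S_1^R$, convexity, $u_R(0)=0=\nabla u_R(0)$ and the Neumann condition $D_nu_R(x',0)=a_Rx_1$ with $0\le a_R\le C$ place $u_R$ in the hypotheses of Lemma \ref{boundary nabla estimate} (which allows $\sigma_i(0)\neq0$), bounding $\nabla u_R$, and in particular $D_nu_R$, uniformly on $S_1^R$. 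Taking instead $\sigma_1(s)\sim s^{1+\gamma}$, $\sigma_0(s)\sim s$ and optimizing $t\sim|y|$ in \eqref{boundary nabla estimate equation} yields $|\nabla u_R(y)|\le C|y|^{\gamma}$ near $0$ (the $\delta_R$ contribution confined to $|y|\lesssim\delta_R^{1/(1+\gamma)}$, hence vanishing as $R\to\infty$): the universal $C^{1,\gamma}$ modulus of $u_R$ at $0$. With this gradient bound near $0$, the interior barrier $w^{0,+}$ of Lemma \ref{sup solution to lip} applies legitimately.

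The step I expect to be the main obstacle is the bookkeeping in the iteration: the normalizing maps $\D_{\eta^{-j}R},\A_{\eta^{-j}R}$ are not uniformly bounded individually --- only $\det\D$ and the combinations controlled in \eqref{normal genral assump} are --- so one must check that the composite maps arising along the chain remain of the admissible sliding-plus-diagonal type with universal constants, so that the two inclusions of \eqref{7.3} compose cleanly across scales, and one must track the $O(\delta_R)$ defect through the chain to see that the accumulated error stays $O(\delta_R)$ rather than growing with the number of steps.
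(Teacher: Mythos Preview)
Your proposal is correct and follows the same approach the paper implicitly takes. In the paper this Claim is not given a separate formal proof; the explanation is embedded in the statement itself as a brief sketch (``using iterative techniques, one can see that \ldots''), together with the one-line observation about $S_{\eta^k}^{\eta^{-k}R}\subset B_{(1-\eta)^kC}(0)$ and the reference to Lemma~\ref{boundary nabla estimate}. Your write-up supplies exactly the details the paper omits: the scale covariance of the normalization, the dictionary $u_R(x)<t\iff x\in S_t^R$, the chaining of the two inclusions across the scales $\eta^{-j}R$, and the invocation of Lemma~\ref{boundary nabla estimate} with $\sigma_1(s)\sim s^{1+\gamma}$, $\sigma_0(s)\sim s$ to extract the $C^{1,\gamma}$ modulus. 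The concern you flag at the end about the composite normalizing maps is handled in the paper by the Good Shape Theorem~\ref{good shape theorem}: since each $(u_{\eta^{-j}R},S_1^{\eta^{-j}R})$ satisfies \eqref{normal section}--\eqref{normal genral assump} uniformly, the transition maps $\D_t^R,\A_t^R$ between consecutive scales are universally controlled, and the inclusions compose with universal constants.
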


To prove Theorem \ref{uniformly strict convex thm}, in addition to the   notations  $\delta_0, c_0$   in Theorem \ref{zero petrub neumann} and
$a_R$ in \eqref{normal neumann} we need  the following  lemma.
%. We recommend that readers ignore $\delta_R $ in  first reading. We will first prove \eqref{7.3}.

\begin{Lemma} %7.3
	If  $R \geq R_0$ is large, then  $a_R \geq c\delta_0 a$ and
	\begin{equation}  \label{7.5}
		cR^{\frac{1}{2}}\I'' \leq D_R'' \leq CR^{\frac{1}{2}}\I''.
	\end{equation}
\end{Lemma}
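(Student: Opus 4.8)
The plan is to reduce both conclusions to two-sided bounds on the entries $d_1(R),\dots,d_n(R)$ of $\D_R$. Recall from Section~5 that $\det\D_R=\operatorname{Vol}(S_R)$, that Theorem~\ref{good shape theorem} gives $c\le\tau_R\le C$ and hence (with Lemma~\ref{volume bound by measure}) $cR^{n/2}\le\operatorname{Vol}(S_R)\le CR^{n/2}$, and that by \eqref{normal neumann} one has $a_R=\frac{d_1(R)d_n(R)}{R}\,a$. Thus $a_R\ge c\delta_0 a$ is equivalent to $d_1(R)d_n(R)\ge cR$ (and when $n=2$ this is automatic, since then $d_1(R)d_n(R)=\operatorname{Vol}(S_R)\sim R$). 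Moreover, once we also know $d_1(R)d_n(R)\le CR$ and $d_i(R)\ge cR^{1/2}$ for $2\le i\le n-1$, then $\det\D_R\sim R^{n/2}$ forces $\prod_{i=2}^{n-1}d_i(R)\le CR^{(n-2)/2}$, whence $d_j(R)\le CR^{1/2}$ for every auxiliary $j$, which is precisely $cR^{1/2}\I''\le\D_R''\le CR^{1/2}\I''$. So the whole lemma reduces to the inequalities $d_i(R)\ge cR^{1/2}$ ($2\le i\le n-1$) and $cR\le d_1(R)d_n(R)\le CR$.

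First, the lower bound on the auxiliary entries, which also yields $d_1(R)d_n(R)\le CR$. By \eqref{growth innitial assump}, $u(0,z'',0)<\tfrac K2(|z''|^2+1)<R$ whenever $|z''|\le cR^{1/2}$ and $R$ is large, so $S_R(0)\cap\R^{n-2}\supset B_{cR^{1/2}}''(0)$. Restricted to $\R^{n-2}$, the normalization map is $(\D_R'')^{-1}$ composed with a rotation --- the two sliding transformations act trivially there and the John change of coordinates fixes $e_1$ and $e_n$ --- so $S_1^R\cap\R^{n-2}$ contains an ellipsoid whose $i$-th semiaxis equals $cR^{1/2}/d_i(R)$. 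Since $S_1^R\subset B_C(0)$ by Theorem~\ref{good shape theorem}, each such semiaxis is at most $C$, giving $d_i(R)\ge cR^{1/2}$ for $2\le i\le n-1$. Hence $\prod_{i=2}^{n-1}d_i(R)\ge cR^{(n-2)/2}$ and $d_1(R)d_n(R)=\det\D_R\big/\prod_{i=2}^{n-1}d_i(R)\le CR^{n/2}/R^{(n-2)/2}=CR$.

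The decisive step is $d_1(R)d_n(R)\ge cR$, and it is here that $\delta_0$ from Theorem~\ref{zero petrub neumann} enters, via a dichotomy. If $a_R\ge\delta_0$, then $d_1(R)d_n(R)=Ra_R/a\ge(\delta_0/a)R$, and also $a_R\ge\delta_0\ge c\delta_0 a$ (take $c=1/a$). If instead $a_R<\delta_0$, then $u_R$ solves problem \eqref{6.1} with coefficient $a_R\in(-\delta_0,\delta_0)$ and satisfies \eqref{6.2} by \eqref{normal section} and \eqref{u=0 nabla u =0}, so Theorem~\ref{zero petrub neumann} gives $u_R\in C^{2,\alpha}(\overline{B_{c_0}^+(0)})$ with $\|u_R\|_{C^{2,\alpha}}\le C$; since $\det D^2u_R\equiv1$ this forces $c\I\le D^2u_R(0)\le C\I$. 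Writing $S:=\D_R\A_R^{-1}$ for the linear normalization (so $u_R=u(S\,\cdot\,)/R$ and $S$ preserves $\overline{\R_+^n}$), it follows that $u$ itself is $C^{2,\alpha}$ up to the boundary near $0$, hence $M:=D^2u(0)$ is a fixed positive definite matrix with $\det M=1$. From $\D_R M\D_R=\A_R^{T}\big(R\,D^2u_R(0)\big)\A_R$, together with $D^2u_R(0)\sim\I$ and the uniform boundedness (with bounded inverse) of the composite slide $\A_R$ --- a consequence of Theorem~\ref{good shape theorem} and \eqref{nomailzaition slide en} --- we get $\D_R^2\sim R\,\I$, i.e. $d_i(R)\sim R^{1/2}$ for every $i$; in particular $d_1(R)d_n(R)\sim R$ and $a_R\sim a$. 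In both cases $cR\le d_1(R)d_n(R)\le CR$ and $cR^{1/2}\le d_i(R)\le CR^{1/2}$ ($2\le i\le n-1$), which is the lemma.

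I expect the hard part to be exactly the lower bound $d_1(R)d_n(R)\gtrsim R$, i.e. that $a_R$ does not degenerate to $0$: one must rule out that the normalized sections become asymptotically flat in the $(e_1,e_n)$-plane, and the only available leverage is Theorem~\ref{zero petrub neumann} in the small-Neumann regime, combined with the scale-rigidity that $D^2u(0)$ is one fixed matrix, independent of $R$. A secondary technical point needing care is the uniform boundedness of the non-dilation part of the normalization (the two slides and the John rotation), without which $\D_R M\D_R\sim R\,\I$ cannot be transferred to the individual $d_i(R)$; this boundedness follows from the good-shape estimates of Section~5.
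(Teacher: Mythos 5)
Your proof is correct and follows essentially the same route as the paper: the crux in both is that when $a_R<\delta_0$ Theorem \ref{zero petrub neumann} makes $u_R$ uniformly $C^{2,\alpha}$ at $0$, which forces $\D_R\sim R^{\frac12}\I$ and hence $a_R\sim a$, while the lower bound $d_i(R)\ge cR^{\frac12}$ in the auxiliary directions comes from the growth hypothesis exactly as already recorded in Section 5. The only cosmetic difference is that you extract $\D_R\sim R^{\frac12}\I$ from the fixed Hessian $D^2u(0)$, whereas the paper compares the section $S^{R}_{1/R}(0)$ with $S_1(0)$; these are equivalent uses of the same estimate.
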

\begin{proof}
	Assume by way of contradiction that $a_R \leq  c\delta_0 a\leq \delta_0$. Then Theorem \ref{zero petrub neumann}, together with \eqref{u=0 nabla u =0} and \eqref{7.1},  implies  that the normalization solution $u_R$ is $C^{2,\alpha}$ near $0$.
This is
	\[	B_c^+(0)  \subset t^{-\frac{1}{2}}S_{t}^R(0) \subset B_C^+(0),  \;  \forall  t \in  (0, cc_0^2).\]
	We recall \eqref{7.1}
	and
	$S_{1}(0)=\D_R^{-1}\A_R S_{\frac{1}{R}}^R(0). $
	If $R \geq Cc_0^{-2}$ is large, we can take $t=\frac{1}{R} $ to obtain
	\[c\I \leq \D_R^{-1}\A_R R^{\frac{1}{2}}\leq C\I.\]
	This gives $d_i(R) \leq CR^{\frac{1}{2}}$ for $2 \leq i \leq n-1$. Hence,
	 \[	a_R =\frac{d_1(R)d_n(R)}{R} a \geq  Ca,\]
which is a contradiction.

Thus $a_R \geq c\delta_0 a$,  which implies $\Pi_{i=2}^{n-2} d_i(R) \leq C_1\delta_0^{-1} R^{\frac{n-2}{2}}$.
Noticing that $d_i(R) \geq cR^{\frac{1}{2}}$ for $2 \leq i \leq n-1$ (see \eqref{growth innitial assump}), we see that   \eqref{7.5} holds for $C=(C_1\delta_0^{-1})^{\frac{1}{n-2}}$.
\end{proof}

\begin{Remark} %7.4
	By Lemma 7.3   we could improve \eqref{growth innitial assump} to
	\[	c|z''|^2-\bar{\delta}^{-1}  \leq u(0,z'',0) \leq  C|z''|^2,\]
	which implies
	\begin{equation} \label{7.7}
		c|z''|^2-\delta_R/\bar{\delta} \leq u_R(0,z'',0) \leq  C|z''|^2 \text{ for } R \geq \bar{\delta}.
	\end{equation}
At this moment, the above also holds when $a=0$.
\end{Remark}

{\bf  Proof of Theorem \ref{uniformly strict convex thm}:}   Due to Claim \ref{iteration discussion}, it is sufficient to prove  (7.3), which will be completed by four steps.
According to Theorem 5.1, we only need to prove (7.3) in normal direction and on tangent plane.

{\bf Step 1}.
Let $E_R= S_1^R \setminus S_{\delta_R/\bar{\delta}}^R$, we first prove that
\begin{equation}  \label{7.9}
	\begin{split}
		u_R(x)  \geq \sigma_0 (\max\{ |x_1|,|x_n|\})-C\delta_R \text{ on } E_R
	\end{split}
\end{equation}
for some universal module $\sigma_0$.

 Assume $q=(q' ,0)=(q_1, q'',0)\in \P E_R$ with $q_1\neq 0$, let
 $$\B x=x-\sum_{i=2}^{n-1}\frac{q_i}{q_1}x_1e_i $$ and consider the positive function
\[	\upsilon(x)= \frac{u(\frac{q}{2}+\frac{q_1 \B^{-1}x}{8C})}{ q_1^2}, \ x \in B_1^+ (0).\]
By the convexity, (7.5) and the assumption on $u$, we  have
\[
0 \leq \upsilon( x ) \leq  \frac{C}{ q_1^2}
\]
  and
\[
	\upsilon( x_1, x'',0  )   \leq  \frac{u(q)}{ q_1^2} +  C(|x''|^2 +\frac{\delta_R}{ q_1^2})
	  \leq C|x''|^2+\frac{u(q)+C\delta_R}{ q_1^2}.
\]
The Neumann boundary value condition guarantees
\[	D_n \upsilon( x_1, x'',0  )  \geq -C .\]
Regarding $\frac{u(q_n)+C\delta_R}{ q_1^2}$ as small constant and $q_1^{-1}$ as universal constant,  we use Lemma \ref{strict convex lemma half} to obtain
\[
\begin{split}
	\frac{u(q)+C\delta_R}{ q_1^2} \geq \sigma(\frac{C}{ q_1^2}).
\end{split}
\]
Letting $ \sigma_0(t) = t^2{\sigma}(C/t^2)$, we get
\begin{equation} \label{7.11}
	u(q',0) \geq  \hat \sigma_0(q_1) -C\delta_R, \ \ \forall  (q' ,0)\in \P E_R.
\end{equation}

Next, assume   $q=q_ne_n$ and consider the positive function
\[	v(x)= \frac{u(\frac{3q}{4}+\frac{q_nx}{8C})}{ q_n^2}, \ x \in B_1 (0).\]
Similarly, we have
\[
0 \leq v( x ) \leq  \frac{C}{ q_n^2} \text{ and } 	v( 0, x'',x_n  ) \leq C|x''|^2+\frac{u(q)+C\delta_R}{ q_n^2}.
\]
Using Lemma \ref{strict convex lemma plane},   we get
\begin{equation} \label{7.13}
	u(q_ne_n) \geq  \sigma_0(q_n) -C\delta_R
\end{equation}
for $\sigma_0(t) = t^2{\sigma}(C/t^2)$.

 Since  $S_t$ is of good shape for $t \geq \delta_R/\bar{\delta}$,
combining \eqref{7.11} and \eqref{7.13},  we have proved the desired  \eqref{7.9}.

\textbf{Step 2}. We first choose a large $R_0$   such that $\bar{\delta}R_0^{-1} $ is small enough, and define   $ \delta_R= (\bar{\delta}R^{-1}$.
Then $  \delta_R\leq \delta R_0$ for $R \geq R_0$.

For simplicity, we only consider the point $x \in E:=S_{1}^R(0) \setminus S_{\delta_R}^R(0)$ and assume $t \geq  \delta_R$  in  the rest of this Step.
By iteration, (7.6) means that there exists universal $\gamma \in (0,1)$ such that
\[   u_R(x) \geq c \max \{ |x_1|,|x_n|\}^{\frac{1+\gamma}{\gamma}},\]
which is
\[	\max\{d_1^R(t),d_n^R(t)\} \leq  Ct^{\frac{\gamma}{1+\gamma }}.\]
Recalling (7.4)   and   $\Pi_{i=1}^n d_i^R(t) =t^{\frac{n}{2}}$,  we have
\[	\min\{d_1^R(t),d_n^R(t)\} \geq  ct^{\frac{1}{1+\gamma }}.\]
 This gives (7.3) in the normal direction.

\textbf{Step 3}.  We now have
 \begin{equation}\label{d1 control}
	ct^{\frac{1}{1+\gamma }} \leq d_1^R(t) \leq Ct^{\frac{\gamma}{1+\gamma }},
\end{equation}
  whose right side gives
 \[	u_R (x',0) \geq  c|x_1|^{\frac{1+\gamma}{\gamma }}.\]
  Noticing (7.5) and  that $u_R$ is locally Lipschitz, we get
\begin{equation}\label{d12 mix low barrier 1}
	u_R (x',0) \geq \max\{ c|x''|^2-\frac{C\delta_R}{\delta}|x_1| ,0\}.
\end{equation}
Let $E_1 =\{x'\P(E):\; |x_1| \leq \frac{c^2}{4}|x''|^2\} $ and $E_2= \P(E)\setminus E_1$. The above two inequalities implies  that
\begin{align*}
	u_R (x',0) & \geq \max\{ c|x''|^2-C|x_1| ,0\} \chi_{E_1} +  c|x_1|^{\frac{1+\gamma}{\gamma }}\chi_{E_2}\\
	& \geq \frac{c}{2}|x''|^2\chi_{E_1} +  c|x_1|^{\frac{1+\gamma}{\gamma }}\chi_{E_2} \\
	& \geq c|x'|^{\frac{2(1+\gamma)}{\gamma }},
\end{align*}
Which proves the right side of (7.3) on the  tangent plane.

\textbf{Step 4}.
Given constant $M$, we claim that there exist universal $ \varepsilon $,  such that
\[  \sup\{ s|\, se_1 \in G_{t}^R(0) \} \geq M \varepsilon  \]  for some   $t \in [ \varepsilon , \frac{1}{2}]$.
Then This claim,  (7.5) and (5.8) implies that
\[\varepsilon^{-1}G_\varepsilon^R(0) \supset t^{-1}G_t^R(0) \supset cMB_1(0) \supset cM\P(S_1^R(0)),\]
 which proves the left side of (7.3) on the  tangent plane.

On the contrary to this claim, by the balance property of $G_{t}^R(0)$ we get
\[   u_R(x_1,0) \geq M^{-1}|x_1| \text{ for } x_1 \geq CM\varepsilon.\]
For simplicity, we may require the point $x \in S_{1}^R(0) \setminus (S_{CM\varepsilon}^R(0) \cap \{ |x_1| \leq  2CM\varepsilon \})$ and assume constant $t \geq  CM\varepsilon$
in  the rest of this Step.  Then by the convexity and (7.5) we have
\[ u_R(x_1,x'',0) \geq 2u_R(2x_1,0,0 ) - u_R(0,-x'',0 )  \geq 2M^{-1}|x_1| -C|x''|^2. \]
This together with (7.10) implies that
\[   G_t^R(0) \subset \{  c(|x''|^2 -t) \leq |x_1 | \leq CM(|x''|^2 +t)\}.\]
Since $0 \in G_t^R(0) $, the above relation then implies that
\[ G_t^R(0) \subset  \{ |x_1| \leq CM^4t\}.\]
Thus, $d_1^R(t) \leq CM^4t$.  Taking $t=\varepsilon $ is small enough, this contradicts (7.9). In this way,
 we completes the  proof of Theorem 7.1.

\section{Liouville Theorem}
In this section, we will modify  the techniques developed from the classical $C^{1,\alpha}$ estimate \cite{[C3]} to prove   Theorem \ref{liouville theorem}.
We always suppose that $u$ is a viscosity solution to problem \eqref{liouville problem} and the hypothesis of Theorem \ref{liouville theorem} is satisfied. As we have said, we may assume
\eqref{u=0 nabla u =0}.
Recall the normalization family $(u_R, S_1^R)$ defined by \eqref{normal function}, which satisfies \eqref{normal section}-\eqref{normal genral assump}.

 Consider the function
\begin{equation}\label{8.1}
	v_R(x)=D_nu_R(x)-a_Rx_1,
\end{equation} and let
\begin{equation} \label{8.2}
	\begin{split}
		& m_R =\inf\{\frac{v_R(x)}{x_n}|\, x\in S_1^R(0)\}, \\
		 & M_R =\sup\{\frac{v_R(x)}{x_n}|\, x\in S_1^R(0)\},\\
	 &  \omega_R =\frac{M_R}{m_R}-1.
\end{split}
\end{equation}
Our goal is to prove that
\begin{equation}  \label{8.3}
	c \leq m_R \leq M_R \leq C
\end{equation}
and $\omega_R$ is strictly increasing in $R$. For this purpose, we need  the auxiliary function like those
from \eqref{subtract support function} to \eqref{sup sub test function}.   Given $y\in  G_{\frac{1}{2}}^R(0)$,  let
 \begin{equation}  \label{8.4} u^{R,y}(x)= u_R(x) - u_R(y)-\nabla u_R(y) \cdot (y-x)
 \end{equation}
 and define the functions
\begin{equation}  \label{8.5}
	\begin{split}
		 & w^{R,y}(x) =[u^{R,y}(x)-\frac{n}{2}x_nD_nu^{R,y}(x)],\\
		& 	w^{R,y,+}(x) =C_1[	w^{R,y}(x) +C_2x_n],\\
		& 	w^{R,y,-} (x) =c_1[x_n-c_2	w^{R,y}(x) ].
	\end{split}
\end{equation}
In order to show that  $w^{R,y,\pm}$ are supersolution/subsolution to the following linearized  problem  \eqref{8.17}for $v_R$,  we need the following two lemmas.

\begin{Lemma} %8.1
\label{strict quali line lemma}
	Suppose that  $u$ is a viscosity solution to the first equation of problem (1.1) and  (2.6) is satisfied. Assume that
	\begin{equation} \label{8.6}
		B_c^+(0) \subset S_1^u(0) \subset B_C^+(0)\; \text{and}\;  ||u||_{Lip(B_C(0))} \leq C.
	\end{equation}
	  Define $h_x^u = \sup\{ h |\, S_{t}^u(x) \subset B_1^+(0),\ \forall t < h\}$.
	There exists small universal constants $\delta  >0$, $\rho >0$ such that if
	\begin{equation}\label{8.7}
		c|x|^{\frac{1+\gamma}{\gamma}}-\delta\leq u(x) \leq C|x|^{1+\gamma}+\delta, \ \ \forall x\in S_1^u(0),
	\end{equation}
	then
	\[ h_x^u \geq \sigma_0(x_n) \text{ in } B_{\rho}^+(0).\]
\end{Lemma}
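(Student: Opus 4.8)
The plan is to bound $h^u_x$ from below by the height at which the section $S^u_t(x)$ first meets the hyperplane $\{x_n=0\}$, and to bound that height from below by a compactness argument resting on the interior strict convexity of solutions (Lemma \ref{strict classical line lemma}).

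\emph{Step 1: localization and reduction.} After adding $\delta$ we may take $u\ge 0$. Rescaling $u$ at the origin by $\rho$ turns \eqref{8.6}--\eqref{8.7} into $0\le\widetilde u\le 2C$ on $B_1^+(0)$ together with a uniformly bounded Neumann datum, so Lemma \ref{boundary nabla estimate} gives, for $x\in B^+_\rho(0)$,
\[
 0\le u(x)\le C\rho^{1+\gamma}+\delta,\qquad |\nabla u(x)|\le C\rho^{\gamma}.
\]
Combining this with the super–linear lower bound in \eqref{8.7}, one checks that there is a universal $c_0>0$ with $u(z)-u(x)-\nabla u(x)\cdot(z-x)\ge c_0$ for every $z\in\partial B_{c/2}(0)$ with $z_n>0$, once $\rho,\delta$ are small. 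Hence, for $t\le c_0$, the convex set $S^u_t(x)$ cannot meet $\partial B_{c/2}(0)\cap\{x_n>0\}$, and being convex, containing $x\in B^+_{c/2}(0)$, and (for $t\le m_x$) avoiding $\{x_n=0\}$, it lies in $B^+_{c/2}(0)\subset S_1^u(0)$. Thus
\[
 h^u_x\ \ge\ \min\{c_0,\ m_x\},\qquad m_x:=\inf_{(z',0)}\bigl(u(z',0)-u(x)-\nabla u(x)\cdot((z',0)-x)\bigr)\ \ge 0,
\]
and it remains to produce a universal modulus $\sigma_0\le c_0$ with $m_x\ge\sigma_0(x_n)$.

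\emph{Step 2: $m_x\ge\sigma_0(x_n)$ by compactness.} Fix $s\in(0,\rho)$ and suppose, for contradiction, that there are functions $u_k$ satisfying the hypotheses and points $x_k\in B^+_\rho(0)$ with $(x_k)_n=s$ and $m_{x_k}\to 0$. By the uniform Lipschitz bound in \eqref{8.6}, a subsequence of the $u_k$ converges, uniformly on $\overline{B^+_C(0)}$ and in $C^\infty_{loc}(\{x_n>0\})$, to a limit $u_\infty$ which is again a convex viscosity solution of $\det D^2u_\infty=1$ in the interior with a Neumann condition $D_nu_\infty(x',0)=a_\infty x_1$; moreover $x_k\to x_\infty$ with $(x_\infty)_n=s>0$ and $\nabla u_k(x_k)\to\nabla u_\infty(x_\infty)$. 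Let $z_k=(z_k',0)$ satisfy $u_k(z_k)=u_k(x_k)+\nabla u_k(x_k)\cdot(z_k-x_k)+m_{x_k}$; passing to the limit, $z_k\to z_\infty=(z_\infty',0)$ lies in the contact set of $u_\infty$ at the interior point $x_\infty$. By convexity $u_\infty$ is then affine on the segment $[x_\infty,z_\infty]$, whose relative interior lies in $\{x_n>0\}$; applying Lemma \ref{strict classical line lemma} at any point of that relative interior contradicts $\det D^2u_\infty=1$. Hence $m_*(s):=\inf\{m_x\}>0$ for every $s\in(0,\rho)$, with $m_*$ depending only on the universal data. Choosing a strictly increasing $C^0$ function $\sigma_0$ with $\sigma_0(0)=0$ and $\sigma_0\le\min\{c_0,m_*\}$ on $(0,\rho)$ (such $\sigma_0$ exists since $m_*>0$ there), we conclude $h^u_x\ge\sigma_0(x_n)$ in $B^+_\rho(0)$.

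I expect the main obstacle to be the technical bookkeeping in the compactness step: one must ensure uniform convergence of the $u_k$ up to the flat boundary, stability of both the Monge–Amp\`ere measure and the Neumann relation $D_nu_k(x',0)=a_kx_1$ under this convergence, and that the sections $S_1^{u_k}(0)$ do not degenerate in the limit, so that $u_\infty$ still genuinely falls under the interior strict convexity machinery; the uniform estimates \eqref{8.6}--\eqref{8.7} and the good–shape analysis of Section 5 are what supply this, and keeping all constants dependent only on $(n,a,\kappa_0,K,\lambda,\Lambda)$ is the delicate point. A secondary subtlety is making the rescaling at the origin in Step 1 lawful — it is here that the two–sided power growth \eqref{8.7} enters — and checking that $S^u_t(x)$ remains inside the region where \eqref{8.7} is assumed.
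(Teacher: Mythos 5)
Your overall strategy is the same as the paper's: argue by compactness/contradiction, pass to a limit solution whose contact set at an interior point reaches the flat boundary, and invoke Lemma \ref{strict classical line lemma} (Caffarelli's extremal-point lemma). Your Step 1 reduction of $h^u_x$ to $\min\{c_0,m_x\}$, the localization of the section inside $B_{c/2}^+(0)$ via the two-sided growth \eqref{8.7}, and the replacement of gradients by subgradients in the limit are all fine and consistent with what the paper does (the paper skips the $m_x$ bookkeeping and runs the compactness directly on $h^u_x$, using \eqref{8.7} only to confine the limiting contact set to a small ball).

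The one step that does not work as written is the final contradiction: ``applying Lemma \ref{strict classical line lemma} at any point of that relative interior contradicts $\det D^2u_\infty=1$.'' Lemma \ref{strict classical line lemma} does not forbid the contact set from containing a segment through an interior point; it only says that if the contact set is not a singleton, it has no \emph{extremal} points in the interior. A segment $[x_\infty,z_\infty]$ with $z_\infty\in\{x_n=0\}$ is, by itself, entirely consistent with that dichotomy (its relative interior points are not extremal points of the contact set, and its endpoint $x_\infty$ need not be extremal either, since the contact set could a priori extend past $x_\infty$). The missing step — which is exactly how the paper concludes — is to use your Step 1 localization to see that the full contact set $\Sigma$ of $u_\infty$ at $x_\infty$ is compact, contained in $\overline{B_{c/2}^+(0)}$, and disjoint from $\partial B_{c/2}(0)\cap\{x_n>0\}$, so that every extremal point of $\Sigma$ lies either in the open half-space or on $\{x_n=0\}$; since $\Sigma\ni x_\infty$ with $(x_\infty)_n=s>0$, an extremal point $q$ of the face $\{y\in\Sigma:\,y_n=\max_\Sigma x_n\}$ is an extremal point of $\Sigma$ with $q_n\geq s>0$, i.e.\ an interior extremal point, and \emph{this} contradicts Lemma \ref{strict classical line lemma}. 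You have all the ingredients for this (your $c_0$-barrier on the lateral sphere is precisely the needed confinement), so the repair is a few lines; but as stated the conclusion is a non sequitur. A minor further point: to extract an increasing continuous modulus $\sigma_0$ you should run the compactness over $\{x_n\geq s\}$ rather than $\{x_n=s\}$ (or take the monotone envelope $\inf_{s'\geq s}m_*(s')$ afterwards), since $m_*(s)>0$ pointwise does not by itself give monotonicity.
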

\begin{proof}
	If the result were false, we could find  a constant $\tau >0$, a sequence of  convex  viscosity solutions $u_k$
satisfying (8.6)-(8.7)  and points $p_k \in B_{\rho}(0)$
	such that
	\[p_k \cdot e_n \geq \tau,  \ \  \text{and}\;
	h_{p_k}^{u_k} \to 0.\]
	Letting $k \to \infty$, we get a limit $v$  which still satisfies our assumption and $p_k$ converges to a point $p$ such that  $h_{p}^{v}=0$.
Let $\Sigma :=S_0^v(p)$. Then $\Sigma $ is not a single point set.  (8.7)    implies that $ \Sigma \subset B_{\delta_1}(0)$ for some constant $\delta_1>0 $
depending on $\delta , \rho$.  Hence we can find an interior extremal point
	\[q\in \{ y|\, y\in \Sigma , y_n  =\sup_{x\in \Sigma}x_n\} ,\]
	  which contradicts  Lemma  2.14.
\end{proof}

\begin{Lemma}%8.2
	 For large $R \geq R_0$, one has
	\begin{equation}\label{8.8}
		\sigma_0(x_n) \leq   v_R(x)  \leq C \text{ in } S_1^R(0).
	\end{equation}
\end{Lemma}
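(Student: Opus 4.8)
The plan is to prove the two sides of \eqref{8.8} separately. The upper bound $v_R\le C$ is essentially already available from Section 7: by Claim \ref{iteration discussion} the right-hand inclusion in \eqref{7.3} gives $S_{\eta^{k}}^{\eta^{-k}R}(0)\subset B_{(1-\eta)^{k}C}(0)$, so the global estimate for $u_R$ in $S_1^R$ reduces to a local estimate near $0$, and in particular $D_nu_R$ is uniformly bounded in $S_1^R(0)$; together with $|a_R|\le C$ from \eqref{normal genral assump} and $S_1^R(0)\subset B_C(0)$ from \eqref{normal section}, this gives $v_R=D_nu_R-a_Rx_1\le C$ in $S_1^R(0)$. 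For the lower bound, first note $v_R\ge 0$: by Corollary \ref{en continues} one has $D_nu_R(x',0)=a_Rx_1$ on $G_1^R(0)$, and since $u_R$ is convex the map $t\mapsto D_nu_R(x',t)$ is nondecreasing, whence $v_R(x)=D_nu_R(x)-D_nu_R(x',0)\ge 0$ whenever $(x',0)\in\overline{G_1^R(0)}$. By the scaling reduction of Claim \ref{iteration discussion} it suffices to prove the quantitative lower bound for $x$ in a fixed half-ball $B_\rho^+(0)\subset B_c^+(0)\subset S_1^R(0)$, where automatically $(x',0)\in G_1^R(0)$; the full statement in $S_1^R(0)$ then follows by applying this local bound to $u_{\eta^{-k}R}$ on $S_{\eta^{k}}^{\eta^{-k}R}\subset B_\rho^+(0)$ for a universal large $k$.

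For the lower bound on $B_\rho^+(0)$ I would argue by compactness. The elementary starting point is the convexity identity, valid for any $x=(x',x_n)\in B_\rho^+(0)$,
\[
 x_n\,v_R(x)\;\ge\;\int_0^{x_n}\!\bigl(D_nu_R(x',t)-D_nu_R(x',0)\bigr)\,dt\;=\;u_R(x)-u_R(x',0)-a_Rx_1x_n\;\ge\;0,
\]
so it is enough to produce a universal modulus with $u_R(x)-u_R(x',0)-a_Rx_1x_n\ge\sigma_0(x_n)\,x_n$. If this failed, there would be $\tau>0$ and sequences $R_k\ge R_0$ and $x^k\in B_\rho^+(0)$ with $x_n^k\ge\tau$ and $u_{R_k}(x^k)-u_{R_k}((x^k)',0)-a_{R_k}x_1^kx_n^k\to 0$; here $R_k\to\infty$ may be assumed, since for a bounded subsequence $R_k\to R^*$ the display plus continuity of $v_{R^*}$ (Lemma \ref{linearized problem}) would give $v_{R^*}=0$ at an interior point, impossible because $\det D^2u_{R^*}=1$ forces $D^2u_{R^*}$ to be positive definite, hence $D_{nn}u_{R^*}>0$, in the interior. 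By the uniform Lipschitz bound of Claim \ref{iteration discussion} the $u_{R_k}$ are precompact in $C(\overline{B_c^+(0)})$; along a subsequence $u_{R_k}\to u_\infty$ uniformly, $a_{R_k}\to a_\infty$, $x^k\to x^\infty$ with $x_n^\infty\ge\tau$. Since $\delta_{R_k}\to 0$, \eqref{7.4} passes to the limit as $c|y|^{(1+\gamma)/\gamma}\le u_\infty(y)\le C|y|^{1+\gamma}$, and by stability of viscosity solutions $u_\infty$ is a convex solution of $\det D^2u_\infty=1$ in $B_c^+(0)$ with $D_nu_\infty(x',0)=a_\infty x_1$; by Caffarelli's interior regularity $u_\infty\in C^\infty$ in the interior with $D^2u_\infty$ positive definite. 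Passing to the limit in the deficit gives $u_\infty(x^\infty)-u_\infty((x^\infty)',0)=a_\infty x_1^\infty x_n^\infty$, and combining this with $u_\infty(x^\infty)-u_\infty((x^\infty)',0)=\int_0^{x_n^\infty}D_nu_\infty((x^\infty)',t)\,dt$, with $D_nu_\infty((x^\infty)',0)=a_\infty x_1^\infty$, and with the monotonicity of $t\mapsto D_nu_\infty((x^\infty)',t)$, forces $D_{nn}u_\infty((x^\infty)',t)=0$ for all $t\in(0,x_n^\infty)$. Since the points $((x^\infty)',t)$, $0<t\le x_n^\infty$, lie in the interior of $B_c^+(0)$ (as $\rho$ is small), this contradicts $D^2u_\infty>0$, and the lower bound follows.

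The companion Lemma \ref{strict quali line lemma} enters at the same place and for the same reason: its hypotheses (8.6)--(8.7) hold for $u_R$ once $R$ is large --- from \eqref{7.1}, from \eqref{7.4} whose error term $C\delta_R/\bar\delta\to 0$, and from the Lipschitz bound of Claim \ref{iteration discussion} --- so $h_x^{u_R}\ge\sigma_0(x_n)$ on $B_\rho^+(0)$, which keeps the relevant sections (and hence their blow-down limits) strictly off $\{x_n=0\}$ and makes the interior nondegeneracy of $u_\infty$ the operative fact. The step I expect to be delicate is precisely this limiting passage: one must verify carefully that the Neumann condition is stable under the locally uniform convergence $u_{R_k}\to u_\infty$ --- so that $D_nu_\infty(x',0)=a_\infty x_1$ and the vanishing of the vertical deficit genuinely forces $D_{nn}u_\infty\equiv 0$ along an interior segment --- and that letting $R\to\infty$, rather than fixing $R$, is essential, since it is this that kills the error in \eqref{7.4} and leaves $u_\infty$ non-degenerate. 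Everything else (the reduction via Claim \ref{iteration discussion}, the Lipschitz and boundary-gradient bounds, the interior $C^2$ regularity) is routine given the results of Sections 2--7.
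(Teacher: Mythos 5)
Your proposal is correct, and for the upper bound it coincides with the paper's argument (reduce to a neighbourhood of $0$ via Claim \ref{iteration discussion} and use the $C^{1,\gamma}$ modulus from \eqref{7.4} together with $|a_R|\le C$). For the lower bound, however, you take a genuinely different route. The paper's proof is a barrier/Harnack argument: it first establishes the section-height bound $h_x^R\ge\sigma_0(x_n)$ via Lemma \ref{strict quali line lemma}, upgrades this to two-sided interior Hessian bounds $\sigma_0(x_n)\I\le D^2u_R\le\sigma_0(x_n)^{-1}\I$ by the classical interior estimates on good-shape sections, notes $v_R(ce_n)\ge c$ by convexity and the strict-convexity lower bound, and then applies the Harnack inequality to the linearized equation $U_R^{ij}D_{ij}v_R=0$ on $\{x_n\ge y_n/2\}$ to propagate positivity down to height $y_n$. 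You instead reduce the bound to positivity of the vertical deficit $u_R(x)-u_R(x',0)-a_Rx_1x_n$ and run a compactness argument: a vanishing deficit at points with $x_n\ge\tau$ would, in the limit, make $u_\infty$ affine along an interior vertical segment (here the ``delicate'' Neumann stability you flag is not really needed --- the one-sided inequality \eqref{en growth neumann} passes to the limit and, combined with equality of the deficit at the two endpoints and convexity of $t\mapsto u_\infty(x',t)$, already forces affineness), contradicting the interior strict convexity and smoothness guaranteed by Lemma \ref{strict quali line lemma} and \eqref{7.4} with $\delta_R\to0$. Both arguments ultimately rest on the same nondegeneracy input (Theorem \ref{uniformly strict convex thm} plus Lemma \ref{strict quali line lemma}); yours is shorter and avoids the Harnack inequality, at the price of a non-constructive modulus $\sigma_0$ and of not producing the quantified Hessian bounds \eqref{8.12}, which the paper's proof delivers en route and then reuses in \eqref{8.18}, \eqref{8.23}--\eqref{8.24}; if you adopt your proof you must still establish \eqref{8.11}--\eqref{8.12} separately for the proof of Theorem \ref{liouville theorem}.
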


\begin{proof}
As we have discussed in Claim \ref{iteration discussion}, we need only to prove  (8.8) for $ y \in S_{\sigma_0(|c_0|)}^R(0)\subset B_{c_0}(0)$
 for a small universal $c_0$.  According to Lemma 2.5,  the Neumann boundary value condition and \eqref{7.4} provides
 a $C^1$ module for $u_R$ at $0$.  This is,
 \begin{equation}\label{8.9}|\nabla u(x)| \leq C|x|^{\gamma}, \; \text{for}\; x \; \text{near}\;  0,
 \end{equation}
  which implies
	  the right side of \eqref{8.8}.

As in Lemma 8.1 we denote
\begin{equation} \label{8.10}
	h_y^R = \sup\{ h |\, S_{t}^{R}(y) \subset \R_+^n,\ \forall t < h\}.
\end{equation}
We will find  a universal $C^0$ module  $\sigma_0$ such that   the following three inequalities hold in $ S_1^R(0) $:
\begin{equation}  \label{8.11}
	h_x^R \geq \sigma_0(x_n),
\end{equation}
\begin{equation}  \label{8.12}
	\sigma_0(x_n) \I \leq   D^2u_R(x) \leq \frac{1}{ \sigma_0(x_n)} \I,
\end{equation}
and
\begin{equation}  \label{8.13}
	v_R(x)\geq \sigma_0(x_n).
\end{equation}

 Again,  it is sufficient to prove   (8.10)-(8.12) for $x$ around $0$.
	Suppose $c_0>0$ is small and $y\in  S_{ c_0} ^R(0)$, then  (7.3) and (8.9) means
	\begin{equation*}
		|\nabla u_R(y)|+u_R(y) \leq c_1:=c_0+Cc_0^{\gamma}
	\end{equation*}
	and $c_1$ is still small. Thus, the section $S_{h_y^R}^R(y) \cap B_C(0) $ is contained in  $S_{c_1}^R(0) \subset S_{\frac{1}{2}}^R(0)$
and will touch $\partial \R_+^{n}$ at some point $z=(z',0) \in G_{c_1}^R(0)$.
	
Let $l_{zy}$ be the line segment connecting $z$ and $y$. Extend   $l_{zy}$  such that it intersects $\partial B_{{c}}^+(0)$ at point $q$.
	Set
	 $$w(x)=u_R(x)-u_R(y)-\nabla u_R(y)\cdot (x-y).$$ Then
	\begin{equation}  \label{8.14}
		w(x) \leq h_y^R, \ x \in l_{yz}.
	\end{equation}
	%If $h_y^R$ is small, we still have $S_{c^2}^{w}(0) \subset B_{c}(0)$.
	 %Consider the function
	%\[		v(x)= \frac{w(z+x_n(q-z)+(x',0))}{ q_n^{\frac{2}{n}}}, \ x \in B_1 (0).\]
	%We have $v(0) \geq \frac{c^2}{2 q_n^{\frac{2}{n}}}$, $0\leq v \leq  \frac{1}{2 q_n^{\frac{2}{n}}}$ and
	%\[
	%0 \leq v(te_n) \leq  \frac{h_y^R}{ q_n^{\frac{2}{n}}} \text{ for } t \in (-1,-1+cy_n).
	%\]
	%Apply Lemma \ref{strict quali line lemma}, we get
	%\[  \frac{h_y^R}{ q_n^{\frac{2}{n}}}  \geq \sigma_0(\frac{c^2}{2 q_n^{\frac{2}{n}}}),\]
Furthermore, it follows from Theorem 7.1 and Lemma 8.1 that
 \begin{equation}  \label{8.15}
		h_y^R \geq \sigma_0(y_n)
	\end{equation}
	for $\sigma_0(t) :=ct^{\frac{2}{n}} \sigma(c t^{-\frac{2}{n}})$.
	
	The classical  estimate  result (see Lemma \ref{volume bound by measure}) gives $\operatorname{Vol}(_{h_y^R}^R(y)) \sim (h_y^R)^{\frac{n}{2}}$.
 Notice that $S_{h_y^R} ^R \subset B_C^+(0)$, thus $S_{h_y^R}^R\supset B^+_{c(h_y^R)^{\frac{1}{2}}}(0) $
and the classical interior estimate (See Lemma \ref{c2a classical estimate}  and its proof) then gives
	\begin{equation}  \label{8.16}
		\hat \sigma_0(y_n) \I \leq   D^2u_R(y) \leq \frac{1}{\hat \sigma_0(y_n)} \I
	\end{equation}
	for $\hat \sigma_0(y_n)=\min\{ [\sigma_0(y_n)]^{n-2}, \sigma_0(y_n)\}$.
	
	  %If we consider the case  multiplying by a small universal constant,
Now, up to a bounded linear transform which maps  in $S_{\sigma_0}^R(0)$
to   $S_1^{\sigma_0 R}$, (8.15)-(8.16) and
the statement in Claim \ref{iteration discussion} implies that (8.11)  and (8.12)    in $S_1^R(0) $.
	
	Finally, let us prove the (8.13). Similarly as  Lemma \ref{linearized problem}, the non-negative function $v_R (x)$ in \eqref{8.1}  solves the linearized problem:
	\begin{equation}  \label{8.17}
		\begin{cases}
			U_R^{ij} D_{ij}v_R=0        & \text{ in } S_1^R(0)\\
			0\leq  v_R \leq C    & \text{ in } S_1^R(0) \\
			v_R=  0                      & \text{ on }  {G_1^R(0)}
		\end{cases}.
	\end{equation}
Notice that we also have $v_R(ce_n) \geq c $ by the convexity. Due to (8.12),  the  first equation in (8.17) is
uniformly elliptic with norm of the elliptic coefficients depending on $\sigma_0(y_n)$
in the domain $E(y_n)=S_1^R(0) \cap \{ x_n \geq \frac{y_n}{2}\}$. We apply the classical Harnack inequality  to obtain
	\begin{equation}\label{8.18}
		v_R(y',y_n) \geq C(||U_R^{ij}||_{E(y_n)},||U_R^{ij,-1}||_{E(y_n)}, \frac{y_n}{2})  v_R(ce_n) \geq  \sigma_0(y_n)
	\end{equation}
  in $ S_{\frac{1}{2}} \cap \{x_n \geq \frac{y_n}{2}\}$ for some new $\sigma_0$. Thus, the proof is completed.
	
\end{proof}

\textbf{Proof of Theorem \ref{liouville theorem}:} It follows from
Lemma 8.2  that  $v_R (x)$ satisfies the following problem:
\begin{equation} \label{8.19}
	\begin{cases}
		U_R^{ij} D_{ij}v_R=0        & \text{ in } S_1^R(0)\\
		\sigma_0(x_n) \leq  v_R \leq C    & \text{ on } \partial S_1^R(0) \setminus G_1^R(0)  \\
		v_R=  0                      & \text{ on }  {G_1^R(0)}
	\end{cases}.
\end{equation}
Then as Corollary \ref{lip unversal by sup} and 3.8 we see that   the function $	w^{R,y,\pm}(x)$ in \eqref{8.5}
 are supersolution/subsolution of  problem  (8.19) in $S_c^R(0)$,
and $cx_n \leq v_R (x) \leq Cx_n$  in $ S_c^R(0)$.
Using  the definition (8.2), we get
\begin{equation*}
	c \leq m_R \leq M_R \leq C,
\end{equation*}
which is the desired (8.3).

Next, we are going to prove
\begin{equation} \label{8.20}
	\omega_{\rho R} \leq (1-\theta)\omega_{ R}
\end{equation}
for some universal $\rho >0$ and $\theta\in (0, 1)$.

 Let
 \begin{equation*}	\begin{split}
	& m_{R,t} =\inf\{\frac{ v_R(x)}{x_n}|\, x\in S_t^R(0)\} ,\\
	& M_{R,t} =\sup\{\frac{ v_R(x)}{x_n} |\, x\in S_t^R(0)\}, \\
 	& \omega_{R,t} =\frac{M_{R,t}}{m_{R,t}}-1.
 \end{split}
\end{equation*}
  It is clear that for all  $ t\in (0, 1)$,
\[		m_{R,1} \leq m_{R,t}  \leq  M_{R,t} \leq M_{R,1}  \]
and
\[\omega_{R,t} =\omega_{Rt}.\]
To prove (8.20), it is sufficient to  prove
\begin{equation} \label {8.21}
	\omega_{R,\rho } \leq (1-\theta)\omega_{ R,1}
\end{equation}
for some universal $\rho >0$ and $\theta\in (0, 1)$.
 Assume $\omega_{ R,1}>0$ and consider the two non-negative functions:
\begin{equation} \label{8.22}
	\begin{split}
		H_1(x) =\frac{v_R(x)-m_{R,1}x_n}{M_{R,1}-m_{R,1}}, \\
		H_2(x) =\frac{M_{R,1}x_n-v_R(x)}{M_{R,1}-m_{R,1}}.
	\end{split}
\end{equation}  It follows from (8.19) that
each $H_i $ $ (i=1, 2)$ solves
\begin{equation} \label{8.23}
	\begin{cases}
		U_R^{ij} D_{ij} H_i=0        & \text{ in } S_1^{R}(0)\\
		0 \leq  H_i \leq 1    & \text{ in } S_1^{R}(0) \\
		H_i=  0                      & \text{ on }  {G_1^R(0)}
	\end{cases}.
\end{equation}
While $H_1+H_2=x_n$, we  may assume $H_1(ce_n) \geq \frac{c}{2}$ without loss of generality.

Since (8.12)   means  that  the  first equation in (8.23)  is universal elliptic away from $G_1^R(0)$, as (8.18) we  have
\begin{equation} \label{8.24}
	\begin{split}
		H_1(x)  \geq \sigma_0(x_n)H_1(ce_n) \text{ in } B_{c/2}^+(0).
	\end{split}
\end{equation}

Again, for any $y=(y',0) \in G_{c_0}^R(0)$ with $c_0>0$  small enough, we could find  a universal
constant $\bar{c}>0$ such that $B_{c(\bar{c})/2}^+(0) \subset S_{\bar{c}}^{R}(y) \subset B_{c/2}^+(0) $.

With some universal constant $C_1, C_2, c_1, c_2$  depending  on $\bar{c}$ and $\sigma_0(x_n)$, the function  $	w^{R,y,\pm}(x)$
in \eqref{8.5}   is still   supersolution/subsolution of $H_1$   to the following  problem:
 \begin{equation} \label{8.25}
\begin{cases}
U_R^{ij} D_{ij}H_1=0        & \text{ in } S_{\bar{c}}^{R}(y)\\
 \sigma_0(x_n) \leq  H_1 \leq   1    & \text{ on } \partial S_{\bar{c}}^{R}(y) \setminus G_{\bar{c}}^{R}(y)  \\
H_1=  0     & \text{ on }  G_{\bar{c}}^{R}(y)
\end{cases}.
\end{equation}
In conclusion, we have
 \begin{equation} \label{8.26}
 H_1(y+te_n) \geq w^{R,y,-}(y+te_n) \geq \frac{c_1}{2} t \; \text{ for } \;  t \leq t_0.
\end{equation}

On the other hand, it follows from (7.2) (or (7.3)) that
  \begin{equation} \label{8.27}  S_{\rho}^R(0) \subset G_{c_0}^R(0) \times [0,t_0) \; \text{ for} \;  \rho =c\min\{ c_0^{\frac{(1+\gamma)^2}{\gamma}}, t_0\}.
 \end{equation}
 Combining (8.26) and (8.27),  we have
 \begin{equation*}
\frac{v_R(x)-m_{R,1}x_n}{M_{R,1}-m_{R,1}}\geq  \theta x_n  \;  \text{ in } \;  S_{\rho}^R(0)
\end{equation*}   for $\theta=\frac{cc_1}{2} $.
That is
\[m_{R,\rho}\geq m_{R,1}+\theta ( M_{R,1}-m_{R,1}) \;  \text{ in }  \; S_{\rho}^R(0) ,\]
 which yields the desired (8.21). In this way, (8.20) has been  proved.

%We would like to point out that if $H_2(x) \geq \frac{c}{2}$, we could prove that  $M_{R,\rho}\geq M_{R,1}-C\theta  \text{ in }  S_{\rho}^R(0) $
 %and so \eqref{8.27} still holds. In this way, \eqref{8.19} is proved.

By (8.3) and (8.20) we see that
\begin{equation*}
\omega_{R} =\omega_{\rho^k \rho^{-k}R}  \leq (1-\theta)^k\omega_{ R\rho^{-k}}  \leq C(1-\theta)^k.
\end{equation*}
Letting  $k \to 0$, we get $\omega_{R}=0$ for any $R$.
Hence, $v_R(x)=D_nu_R-a_Rx_1$ is a linear function in $S_R(0)$. Then letting $R\to \infty$, we get the limit function
 $\hat v(x):=D_nu(x)-ax_1=cx_n$ for some $c>0$.

Lemma \ref{strict convex in interior} means that $u$ is strict convex and smooth away from $\partial \R_+^{n}$. Thus
 $$u(x) = f(x')+ax_1x_n+cx_n^2$$  for some smooth function $f$ in $\R^{n-1}$. Let $$g(x')=f(x')-\frac{a^2}{4c}x_1^2.$$
 Since $D^2 u(x',1) > 0$, we have  $D^2g(x') >0$. Therefore,  $g$ is a convex, smooth  function solving
\begin{equation*}
\operatorname{det} D^2 g =1\;  \text{ in }\;  \R^{n-1}.
\end{equation*}
The classical results (Theorem \ref{global classical results}) shows that $g$ is quadratic polynomial of $x'$, which implies that $u$ is a quadratic polynomial of $x$.
The proof of Theorem \ref{liouville theorem} is completed.

\begin{Remark} %8.2
	In the case $n=2$,  there are two other proofs for   Theorem \ref{liouville theorem}, which  depends on the strict convex Lemma \ref{strict convex lemma half} and \ref{strict convex lemma plane}.
 With the two lemmas, one can show the solution is $C^1$ up to boundary and is of  super-linearity  growth at infinite. This fact,    together
 with partial Legendre transform and the Liouville theorem for harmonic functions, will gives us the first proof. As for the second proof, we
 need to use Wang's function\cite{[W95]}  to construct supersolution/subsolution and so prove that the $u$ is $C^{1,1}$ up to boundary, then
by blowing up at infinity to obtain $D^2u=c\I$.
\end{Remark}
As the end of this paper, we give the two proofs in more details.
	
	\textbf{Proof 1.}
	 We write a point in $\R^2$ as $(x, y)$ and use the classical partial Legendre transform in the $x$-variable. For any $p \in R$,   define
	\[ u^*\left(p,y\right):= \sup_{x \in R}\{ px-u\left(x,y\right)\}. \]
The uniformly strict convexity of $u$ at boundary and the smoothness of $u$ at interior also implies the superlinearity at infinite,  which guarantees
 the uniqueness and existence of $x= X\left(p,y\right) $ such that the above supremum is attained.
	
	By a calculation the two dimensional case of problem \eqref{liouville problem},
	\[\operatorname{det}  D^2u=1 \;  in \; \R_+^2,\ \    D_2 u\left(x,0\right)= ax  \;  on \;  R\]
	is  turned to  the problem for $u^*$:
	$$\begin{cases}
		\Delta u^*=1          & \text{ in } \R_+^2\\
		D_{v^*}u^*= 0                      & \text{ on } \R
	\end{cases}, $$
	where $v^*=e_2+ae_1$. Differentiating $u^*$  with respect to $v^*$ and letting $V=D_{v^*}u^*$ we obtain
	$$\begin{cases}
		\Delta V=  0        & \text{ in } \R_+^2\\
		 V= 0                      & \text{ on } \R
	\end{cases}. $$
	Note that
$$V=D_{v^*}u^*\left(p,y\right)=aX\left(p,y\right)-u_2\left(X,y\right) = u_2\left(X,0\right)-u_2\left(X,y\right) \leq 0.$$
 We can use the Liuoville theorem for harmonic functions to see that the only solutions to above problem are
 $$V(x, y)=D_{v^*}u^*\left(x,y\right) \equiv A$$    for some constant $A$.  Then
	\[ u^*\left(p,y\right)=g\left(p-ay\right)+ Ay \]
	for some function $g:[0,\infty ) \to R $.  Recalling $\Delta u^*=1, D_{v^*}u^*= 0 $,  we have
	\[g\left(0\right)=0 \ and \ \left(1+a^2\right)g''=1 , \]
which implies that  $u^*$ is a quadratic  polynomial, and so is  the $u$.
	
	\textbf{Proof 2. }
	As we have said,  when $n=2$, one could use strict convexity Lemma \ref{strict convex lemma half} and \ref{strict convex lemma plane} to show
that the solution  $u$ to problem \eqref{liouville problem} is uniformly $C^{1,\alpha}$ up to boundary\cite{[C3]}.
	
	We consider Wang's singular function with bounded positive Monge-Amp\'epre measure  for large $a>>2$:
	\[
	W_{a,b}\left(p,q\right)=\begin{cases}  p^{a}+\frac{a^2-1}{a(a-2)}p^{a-\frac{2a}{b}}q^2,\ |p|^a \geq |q|^b \\
		\frac{4a-5}{2(a-2)}q^{b}+\frac{1}{2a}q^{b-\frac{2b}{a}}p^2,\ |q|^b \geq |p|^a
	\end{cases}.
	\]
	Let
$$W\left(x_1,x_2\right)= Ca W_{a,b}\left(\tau x_1,x_2/ \tau \right),$$
 where $\tau >0 $ is small and satisfies $a\tau^a  \leq c$.
	One can use this function to construct a supersolution $(W^+ )$/subsolution  $(W^- )$at boundary point:
	\[ \begin{cases}w^+\left(x\right) = u\left(x\right)-  W\left(x_1,x_2\right) +Mx_2 \\
	  w^-\left(x\right) = \epsilon_1 \left(W\left(x_1,x_2\right)-u\left(x\right)+\epsilon_2 x_2\right)
\end{cases}.   \]
	This will give the uniform estimate $c \leq D_{nn}u \leq C$ on the boundary. Therefore $u$ is $C^{1,1}$ and then in $C^{2,\alpha}$.
After blowing up at infinite, this together with the superlinearity will  imply $D^2u=c\I$ and the proof is completed.

\newpage

\end{document}